\newcommand{\ol}{\overline}
\newcommand{\nn}{\nonumber}
\newcommand{\idc}[1]{{\bf 1}_{\{#1\}}}
\newtheorem{assumption}{Assumption}
\newtheorem{remark}{Remark}
\title{Quickest detection in coupled systems\thanks{An earlier version of this work was presented at the 48th IEEE Conference on Decision and Control and appears in citation \cite{HadjSchaPoor} of this paper. This
    work was partially supported by the RF-CUNY Collaborative grant 80209-04 15, the PSC-CUNY grant 65625-00 43,
    the NSA-MSP grant 081103, the NSF-DMS grant 0807396, the
    NSF-CNS grant 0855217, the ONR grant N00014-09-1-0342, the NSF-CCF-MSC grant 0916452, the NSF-DMS grant 0929317, the NSF-DMS grant 1118605, and the NSF-DMS-ATD grant 1222526. TS gratefully acknowledges the support of
    the CUNY High Performance Computing Facility and the Center for
    Interdisciplinary Applied Mathematics and Computational Sciences.}} 
\author{
Hongzhong Zhang \thanks{Statistics Department, Columbia University (\email{hzhang@stat.columbia.edu})}
\and  Olympia Hadjiliadis \thanks{ Department of Mathematics, Brooklyn
    College, City University of New York, and Departments of Computer
    Science and Mathematics, Graduate Center, City University of New
    York (\email{ohadjiliadis@brooklyn.cuny.edu})}
    \and Tobias Sch\"afer
  \thanks{Department of Mathematics, College of Staten Island, City
    University of New York, and Department of Physics, Graduate
    Center, City University of New York (\email{tobias@math.csi.cuny.edu})}
 \and H.~Vincent Poor \thanks{Department of Electrical Engineering, Princeton
    University (\email{poor@princeton.edu})}
}
\begin{document}
\maketitle
\slugger{sicon}{xxxx}{xx}{x}{x--x}

\begin{abstract}
This work considers the problem of quickest
detection of signals in a coupled system of $N$ sensors, which receive
continuous sequential observations from the environment.
It is assumed that the signals, which are modeled by general
It\^{o} processes, are coupled across sensors, but that
their onset times may differ from sensor to sensor. Two main cases are considered; in the first one
signal strengths are the same across sensors while in the second one they differ by a constant.
The objective
is the optimal detection of the first time at
which any sensor in the system receives a signal.
The problem
is formulated as a stochastic optimization problem in which
an extended minimal Kullback-Leibler divergence criterion
is used as a measure of detection delay, with a constraint on
the mean time to the first false alarm.
The case in
which the sensors employ cumulative sum (CUSUM) strategies
is considered, and it is proved that the minimum of $N$ CUSUMs
is asymptotically optimal as the mean time to the first false alarm
increases without bound. In particular, in the case of equal signal
strengths across sensors, it is seen that the difference in detection
delay of the $N$-CUSUM stopping rule and the unknown optimal stopping
scheme tends to a constant related to the number of sensors as the
mean time to the first false alarm increases without bound.
Alternatively, in the case of unequal signal strengths, it is seen that this difference tends to zero.
\end{abstract}

\begin{keywords}Kullback-Leibler divergence; CUSUM; quickest detection.\end{keywords}

\begin{AMS}62L10; 62L15; 62C20; 60G40.\end{AMS}

\pagestyle{myheadings}
\thispagestyle{plain}
\markboth{Quickest detection}{Quickest detection}

\section{Introduction}

We are interested in the
problem of quickest detection of the onset of a signal in a system of $N$ sensors. We consider the situation in which, although
the observations in one sensor can affect the observations in another, the
onset of a signal can occur at different times (i.e., change points) in each of the $N$ sensors;
that is, the change points differ from sensor to sensor. As an example in which this situation arises
consider a system of sensors monitoring the health of a physical structure in which fault conditions are manifested by vibrations in the structure. Before a change
affects a given sensor, we have only noise in that sensor. Then, after a change, the system is vibrating and
thus the signal received in any location reflects a vibrating system.
Thus, observations at any given sensor are coupled with those received in other locations.
The change points observed at different sensors can occur at different times because the source of the vibrations
(i.e., the excitation) may arrive at different structural elements at different times. Relevant literature related to such models includes, for example, \cite{BassAbdeBenv,BassBenvGourMeve,BassMeveGour,Ewin,HeylLammSas,Juan,PeetRoec}.

We assume that, after re-scaling by a constant factor, the probability law of the observations is the same across sensors.
This assumption, although seemingly restrictive, is realistic in view of the fact that the system of sensors is coupled.
The constant then reflects the fact that signal strength can differ in different locations in the system of sensors.
We model the signals through continuous-time It\^o processes.
The advantage of such models is the fact that they can capture complex dependencies in the observations.
For example, an autoregressive process is a special case of the discrete-time equivalent of an Ornstein-Uhlenbeck process, which in turn, is a special case of an It\^o process.
Other special cases of this model include Markovian models, and
linear state-space systems commonly used in vibration-based structural analysis and health monitoring problems \cite{BassAbdeBenv,BassBenvGourMeve,BassMeveGour,Ewin,HeylLammSas,Juan,PeetRoec}. It is important to stress that
the fact that the system of $N$ sensors is coupled makes the probabilistic treatment of the problem equivalent to the one in which all observations become available in one location. The reason is that one integrated information flow is sufficient for describing such a system.

Our objective is to detect the first onset of a signal in such a system. In other words, we wish
to detect the first change point in the system of sensors described above. Our problem is thus concerned with
on-line change detection. We assume that the change points are unknown constants and adopt a min-max approach to
the detection of the minimal one. We set up the problem as a stochastic optimization problem in which the objective
is to minimize an extended Kullback-Leibler distance which serves as a detection delay subject to a lower bound in the
mean time to the first false alarm. So far, of the min-max type of a set-up only the case of an uncoupled system of observations
has been considered and only in the case that the signals across sensors are modeled as independent Brownian motions with constant drifts
\cite{HadjZhanPoor}.\footnote{The situation with correlated Brownian motions is studied in \cite{corr, RobustCUSUM}.} Our problem is also similar to the one discussed in \cite{BayrPoor} which is concerned with the
detection of the minimum of two Poisson disorder times, but in which a Bayesian approach to modeling the change points is taken.
Another interesting filtering approach to change-point detection is developed in a Bayesian framework in \cite{VellClar}.
Recently the case was also considered of change points that propagate in a sensor array \cite{RaghVeer}.
However, in this configuration the propagation of the change points depends on the \underline{unknown} identity of the
first sensor affected and considers a restricted Markovian mechanism of propagation of the change.
For a summary of the latest work on the topic of quickest detection, please consult \cite{PoorHadj}.

In
this paper we consider the case in which the change points can be different and do not propagate
in any specific configuration. The objective is to detect the minimum (i.e., the first) of the change points.
We demonstrate that, in
the situation described above, at least asymptotically, the minimum of $N$
cumulative sums (CUSUMs) \cite{Mous06,Tart05}, is asymptotically optimal in detecting the minimum of the $N$
different change points, as the mean time to the first false alarm tends
to infinity, with respect to an appropriately extended Kullback-Leibler divergence
criterion \cite{Mous04} that incorporates the possibility of $N$
different change points.
The results in this paper generalize those of \cite{HadjZhanPoor} in that the minimum of $N$ CUSUMs is asymptotically optimal not only in the case in which the observations across sensors are independent but also in the case in which they are coupled. In fact in this paper we not only generalize the asymptotic optimality of the $N$-CUSUM stopping rule to a coupled system of observations but also to more general It\^o dynamics rather than the restricted Brownian motion model of observations.

In the next section we formulate the problem and identify the quantities of interest.
In \S 3 we analyze the properties of the proposed $N$-CUSUM stopping rule. In \S 4, we provide asymptotic expansions of the quantities of interest as the threshold parameters of the $N$-CUSUM stopping rule tend to infinity. In \S 5, we demonstrate that the minimum of $N$
CUSUM stopping rules is asymptotically optimal in an extended Kullback-Leibler sense both in the
case of equal and in the case of unequal signal strengths. The proofs of lemmas and propositions that are omitted  can be found in appendices.  Throughout the paper, we denote by $\mathbb{R}=(-\infty,\infty),\,\mathbb{R}_+=[0,\infty)$ and $\ol{\mathbb{R}}_+=[0,\infty]$.

\section{Mathematical formulation}

We sequentially observe the processes
$\{Z_t^{(i)}\}_{t\ge 0}$ for all $i\in\{1,\ldots,N\}$.
In order to formalize this problem we consider filtered probability space $(\Omega, \mathbb{F}, \mathbb{P})$ with filtration $\mathbb{F}=\{\mathcal{F}_t\}_{t\ge0}$ and
$\mathcal{F}_t=\sigma\{(Z_s^{(1)},\ldots, Z_s^{(N)});s\le t\}$.

The processes
$\{Z_t^{(i)}\}_{t\ge 0}$ for all $i\in\{1,\ldots,N\}$ are It\^o processes satisfying:
\begin{eqnarray}
\label{Itodynamics} dZ_t^{(i)} = \left \{
\begin{array}{ll}
dw_t^{(i)} & t \le \tau_i \\
\alpha_t^{(i)}\,dt + dw_t^{(i)}& t > \tau_i,
\end{array}
\right.
\end{eqnarray}
 where $\{w_t^{(i)}\}_{t\ge 0}$'s are independent standard Brownian motions, and $\tau_i's\in\ol{\mathbb{R}}_+$ denote the times of change-points, which are assumed to be deterministic constants but otherwise unknown. Throughout, we will denote the minimum of $\tau_i$'s by,\footnote{As a convention, if $\tau_i=\infty$ for all $i\in\{1,\ldots,N\}$, we let $\tilde{\tau}=\infty$.} 
 \begin{eqnarray}
 \tilde{\tau}:=\min_{1\le i\le N}\{\tau_i\}.\nn
 \end{eqnarray}
  Finally, we assume that $\alpha_t^{(i)}=\alpha_i(t;Z^{(1)}_\cdot,\ldots, Z^{(N)}_\cdot)$ for some predictable real-valued function $\alpha_i$ on $\mathbb{R}_+\times(C(\mathbb{R}_+))^N$, satisfying the Lipschitz condition and linear growth condition (see for example, Theorem 2.9 on page 289 of \cite{KaratzasShreve91}, or Theorem 2.1 on page 375 of \cite{RevuzYor}). 

The case considered in this paper is that in which there exist constants $\{c_i\}_{1\le i\le N}$ such that the drift processes after $\tau_i$'s are proportional to one another. More specifically, without loss of generality, 
  we assume that: 
  
 \begin{assumption}\label{assum1}There exist
 $1=c_1\le |c_2|\le\ldots\le |c_N|$ such that the processes 
 $|\alpha_t^{(1)}|$, $|c_2\alpha_t^{(2)}|$, $\ldots$, $|c_N\alpha_t^{(N)}|$ are modifications of one another under $\mathbb{P}$.
 \end{assumption}

In particular, we examine two cases that are treated separately in the sequel; the first one corresponds to signal-strength symmetry across sensors and the second one corresponds to  across sensors. The former is described mathematically by assuming $|c_i|=1$ for all $i\in\{1,\ldots,N\}$ and the latter, can be captured mathematically by assuming $\max_{2 \le i \le N} |c_i|>1$.

Although Assumption \ref{assum1}  is mathematically necessary for the main results of this paper, in most applications the nature of $\alpha_t^{(i)}$'s is the same across $i$. In fact, in most applications the signal observed across all sensors $i$ is the same but can be felt at different strengths (the constants $c_i$ represent the different strengths) in different sensors. Our formulation treats the general case in which the signal can  arrive at different points in time in different sensors.  

In what follows we describe an example of two sensors each meant to capture two coupled waves that are emitted as one seismic signal after the onset of an earthquake, namely the S wave and the T-wave (see, for instance, the discussion following page 392 in \cite{BassNiki}).
 These waves are not only buried in noise but have the tendency to interact with each other making the detection of their onset particularly difficult and giving obvious rise to a coupled system. These waves in practice may actually have different onsets as discussed in \cite{BassNiki}. Let us now represent by $Z_t^{(1)}$ and $Z_t^{(2)}$ the observations received in the sensors meant to detect the onset of the earthquake. Before the onset of the earthquake both observations are simply noise. After the onset of the earthquake a signal will be emitted but may be received at different points in time in the different sensors. The models used to capture signals are often autoregressive \cite{BassNiki} whose continuous-time equivalents are Ornstein-Uhlenbeck processes. Thus we need to allow for $\alpha_t^{(i)}$ to depend on $Z_t^{(i)}$. Of course more general models would be necessary in order to capture dependence on past oservations as well. This is the reason that in our model \eqref{Itodynamics}, $\alpha_t^{(i)}$ are generalized to depend on the whole of the path of the observations $Z_t^{(i)}$ up to time $t$. Moreover, a stronger intensity in one of the two waves is known to result in a stronger intensity in the other \cite{BassNiki}. This could then be modeled by allowing both $\alpha_t^{(1)}$ and $\alpha_t^{(2)}$ to have a positive dependence on $Z_t^{(2)}$ and $Z_t^{(1)}$ respectively (note that this dependence does not have to be linear and could in general cases depend on prior observations too-not just the ones at the current time). Even though the two waves are different, they do give rise to the same signal which may be felt with a different strength in each of the sensors and thus the restriction $c_2 \alpha_t^{(2)}=\alpha_t^{(1)}$ with $c_2>0$, although may appear restrictive, is in fact not far from modeling reality. We notice of course that in this example we do not have to impose the absolute value condition as both $\alpha_t^{(1)}$ and $\alpha_t^{(2)}$ will have the same sign. 
 
  Another application is that of a vibrating mechanical system where each sensor may be monitoring a different location on the structure. The signal causing the vibration can be felt at different times in different locations and also with different strengths $c_i$. Let $Z_t^{(i)}$ represent the observations at sensor $i$. At first there is no signal and the sensors only observe noise. After the onset of the signal, which may be felt at different times in different sensors, a signal arrives. A signal causing a vibration in one location may also affect the observations received in another. This leads to feedback \cite{BassAbdeBenv}-\cite{BassMeveGour}, which can then be modeled by allowing $\alpha_t^{(i)}$ to depend on the observations received at all sensors. Also the signal may be more pronounced in one sensor than in another depending on the location of the sensor. This can be captured by allowing the $c_i$'s to be different across sensors. In fact, by allowing the $c_i$'s to have different signs we are able to capture the situation in which the sensors are hit by the signal from opposite directions.

The filtration $\mathbb{F}$ is the filtration generated by the observations received by all sensors. Thus by requiring that
$\alpha_t^{(i)}$ be $\mathcal{F}_t$-measurable for all $i$ , we have managed to capture the coupled nature of the system. In particular, in the special case in which, say, $\alpha_t^{(1)}=-r\sum_{i=1}^N Z_t^{(i)}$, (\ref{Itodynamics}) describes a process that displays an autoregressive (or its continuous equivalent \cite{Novi}) behavior in $\{Z_t^{(1)}\}_{t \ge 0}$, while still being coupled with the observations received by the other sensors. More specifically, the magnitude of each increment of the process $\{Z_t^{(1)}\}_{t \ge 0}$ at each instant $t$ is not only affected by $Z_t^{(1)}$ but also by $Z_t^{(i)}$, $i=2,\ldots,N$, the observations at sensors $2,\ldots,N$. This couples the observations received in sensor $1$ with those received in sensors $2,\ldots,N$ at each instant $t$ and results in a system of interdependent sensors. We notice that the special case described above can also be written in the form of a linear state-space model as follows:
\begin{eqnarray}\label{example}
dZ_t^{(i)}=-r\idc{t>\tau_i}\bigg(\sum_{j=1}^NZ_t^{(j)}\bigg)dt+dw_t^{(i)},\,\,\,i\in\{1,\ldots, N\}.
\end{eqnarray}
Autoregressive models and, more generally, linear state space models have been used to capture seismic signals, navigation systems, vibrating mechanical systems, etc \cite{BassNiki}.
The generality of \eqref{Itodynamics} however is much greater than the special case described above. This is seen in the fact that $\alpha_t^{(i)}$ at each instant $t$ can depend on the totality of the observed paths of each of the signals received up to time $t$. Notice that this linear model clearly satisfies both the Lipschitz condition and the linear growth condition.  To incorporate the different signal strength in each of the sensor observations, we could update the present model in (\ref{example}) by having different constants in each row of the second $N \times N$ matrix appearing in the equation of the linear system above.

To formulate our problem mathematically, we will work with 
 the canonical space $(C(\mathbb{R}_+))^N$ endowed with the filtration $\mathbb{B}=\{\mathcal{B}_t\}_{t\ge0}$, where $\mathcal{B}_t=\sigma(\omega(s); s\le t)$ is the Borel $\sigma$-algebra generated by the the joint coordinate mapping $\omega=(\omega_1,\ldots,\omega_N)$: 
 for  $Z_s^{(i)}(\omega)=\omega_i(s)$, $s\ge0$, $i\in\{1,\ldots,N\}$.  
 
 We introduce a family of probability measures on $((C(\mathbb{R}_+))^N,\mathbb{B})$: $\{P_{\mathbf{t}}\}_{\mathbf{t}\in (\overline{\mathbb{R}}_+)^N}$, where $P_{\mathbf{t}}$ for a fixed $\mathbf{t}=(\tau_1,\ldots,\tau_N)\in(\ol{\mathbb{R}}_+)^N$
corresponds to the measure generated on $(C(\mathbb{R}_+))^N$ by the $N$-dimensional process
$\{(Z_t^{(1)},\ldots,Z_t^{(N)})\}_{t\ge0}$ when the change in $Z_\cdot^{(i)}$  occurs at time point $\tau_i$, for all $i\in\{1,\ldots,N\}$. In particular, the case that $\tau_i=\infty$ for all $i\in\{1,\ldots,N\}$ corresponds to no change regime, where $(Z_t^{(1)},\ldots,Z_t^{(N)})_{t\ge0}$ is a $N$-dimensional Brownian motion; and the measure  generated on $(C(\mathbb{R}_+))^N$ is the 
the $N$-dimensional Wiener measure, which we denote by $P_{\mathbf{t}_\infty}$ for $\mathbf{t}_\infty:=(\infty,\ldots,\infty)$.   Now we can restate Assumption \ref{assum1} as the following: 

\begin{assumption}\label{assum1p}
There exist
 $1=c_1\le |c_2|\le\ldots\le |c_N|$ such that the processes 
 $|\alpha_1(t;\omega)|$, $|c_2\alpha_2(t;\omega)|$, $\ldots$, $|c_N\alpha_N(t;\omega)|$ are modifications of one another under $P_{\mathbf{t}}$ for all $\mathbf{t}\in(\ol{\mathbb{R}}_+)^N$.
\end{assumption}

We will also assume the  drift functions $\alpha_i(\cdot;\cdot)$ satisfy the Novikov condition:
\begin{assumption}For all $t\in\mathbb{R}_+$, we have
\begin{equation}\label{novikov}
E_{\mathbf{t}_\infty}\bigg\{\exp\bigg(\frac{1}{2}\int_0^t\sum_{i=1}^N(\alpha_i(s;\omega))^2ds\bigg)\bigg\}<\infty.
\end{equation}
\end{assumption}
By Theorem 1.10 on page 371 of \cite{RevuzYor}, we know that, if the change-points satisfy $0\le\tau_1<\tau_2<\min_{2<i\le N}\{\tau_i\}$, we have
\begin{align}
\frac{dP_{(\tau_1,\tau_2,\ldots,\tau_N)}}{dP_{(\infty,\infty,\ldots,\infty)}}\bigg|_{\mathcal{B}_t}=&\frac{dP_{(\tau_1,\infty,\ldots,\infty)}}{dP_{(\infty,\infty,\ldots,\infty)}}\bigg|_{\mathcal{B}_t}=\exp(u_1(t)-u_1(t\wedge{\tau_1})),\,\,\,\forall t\in[0,\tau_2],\,\,\,\text{and}\nn\\
\frac{dP_{(\tau_1,\tau_2,\tau_3,\ldots,\tau_N)}}{dP_{(\tau_1,\infty,\infty,\ldots,\infty)}}\bigg|_{\mathcal{B}_t}=&\frac{dP_{(\tau_1,\tau_2,\infty,\ldots,\infty)}}{dP_{(\tau_1,\infty,\infty,\ldots,\infty)}}\bigg|_{\mathcal{B}_t}=\exp(u_2(t)-u_2(\tau_2)),\,\,\,\forall t\in[\tau_2,\min_{2<i\le N}\{\tau_i\}],\nn
\end{align}
and so on, where, for all $ i\in\{1,\ldots, N\}$,
\begin{equation}\label{u process}u_i(t):=\int_0^t\alpha_i(s;\omega)d\omega_i(s)-\int_0^t\frac{1}{2}(\alpha_i(s;\omega))^2ds.\end{equation}
In general, for all $\mathbf{t}=(\tau_1,\ldots,\tau_N)\in(\ol{\mathbb{R}}_+)^N$ we have that, 
\begin{equation}\label{RNDeri}
\frac{dP_{\mathbf{t}}}{dP_{\mathbf{t}_\infty}}\bigg|_{\mathcal{B}_t}=\exp\bigg(\sum_{i=1}^N(u_i(t)-u_i(t\wedge{\tau_i}))\bigg),\,\,\,\forall t\in\mathbb{R}_+.
\end{equation}

Our objective is to find an $\mathbb{B}$-stopping rule $T$ that balances the
trade-off between a small detection delay subject to a lower bound
on the mean-time to the first false alarm and will ultimately detect
$\tilde\tau$. 

To this effect, we propose a generalization of the $J_{KL}$ of \cite{Mous04}, namely
\begin{eqnarray} \label{JKL} J_{KL}^{(N)}(T) &:= & \mathop{\sup_{\mathbf{t}=(\tau_1,\ldots,\tau_N)\in(\ol{\mathbb{R}}_+)^N}}_{\tilde{\tau}<\infty} \mathrm{essup}~E_{\mathbf{t}}\bigg\{\idc{T>\tilde{\tau}} \int_{\tilde{\tau}}^T \frac{1}{2}(\alpha_1(s;\omega))^2ds|\mathcal{B}_{\tilde{\tau}}\bigg\},
\end{eqnarray}
 where the supremum over $\mathbf{t}=(\tau_1, \ldots, \tau_N)$ is taken over
the set in which $\tilde{\tau}<\infty$. That is, we consider the
worst detection delay over all possible realizations of paths of the $N$-tuple of
stochastic processes $\{(Z_t^{(1)},\ldots,Z_t^{(N)})\}_{t\ge0}$ up to
$\tilde{\tau}$ and then consider the worst detection delay over all
possible $N$-tuples $\mathbf{t}=(\tau_1,\ldots,\tau_N)$ over a set in which at least one of
them is forced to take a finite value. This is because $T$ is a stopping rule meant
to detect the minimum of the $N$ change points and therefore if one of the $N$
processes undergoes a regime change, any unit of time by which $T$ delays in
reacting, should be counted towards the detection delay.
This gives rise to the
following stochastic optimization problem:
\begin{equation}
  \begin{array}{c}
    \inf_{T\in\mathbb{B}_\gamma}J_{KL}^{(N)}(T)\\
    ~\text{with }\mathbb{B}_\gamma=\{\mathbb{B}\text{-stopping rule }T~:~E_{\mathbf{t}_\infty}\{\int_0^T\frac{1}{2}(c_N^{-1}\alpha_1(s;\omega))^2ds\}\geq\gamma\}
\end{array}
    \label{eqnproblemKLG},\end{equation}
The constraint in the above optimization problem is a measure on the first time to the first false alarm. To see this, one can recognize the functional $\int_0^t \frac{1}{2}(\alpha_s^{(i)})^2 ds$ as a measure of accumulation of energy of the signal \cite{LiptShir} up to time $t$. The above constraint involves the expected value of this functional under the regime of no signal up to the time of stopping and is thus a measure of a mean time to the first false alarm \cite{LiptShir, Mous04,PoorHadj}. Similarly, the criterion \eqref{JKL} captures the accumulation of energy of the signal after the first instance at which a signal becomes available. It is possible to motivate this criterion through a Kullback-Leibler distance.
To be more specific, for any bounded stopping rule $T$ such that 
\[c_i^2\int_0^T\alpha_i(s;\omega)^2ds=\int_0^T\alpha_1(s;\omega)^2ds<\infty,\,\,\,P_{\mathbf{t}}\text{-a.s.},\,\,\,\forall\mathbf{t}=(\tau_1,\ldots,\tau_N)\in(\overline{\mathbb{R}}_+)^N,\]
we can apply \eqref{RNDeri} to obtain that
\begin{align}
E_{\mathbf{t}}\bigg\{\log\frac{dP_{\mathbf{t}}}{dP_{\mathbf{t}_\infty}}\bigg|_{\mathcal{B}_T}
 |\mathcal{B}_{\tilde{\tau}}\bigg\} &=E_{\mathbf{t}}\bigg\{\sum_{i=1}^N(u_i(T)-u_i(T\wedge\tau_i))|\mathcal{B}_{\tilde{\tau}}\bigg\}\nn\\
 &=\sum_{i=1}^{N} E_{\mathbf{t}}\bigg\{\idc{T>\tau_i}\bigg(\int_{\tau_i}^T\alpha_i(s;\omega)d\omega_i(s)-\int_{\tau_i}^T\frac{1}{2}(\alpha_i(s;\omega))^2ds\bigg)|\mathcal{B}_{\tilde{\tau}}\bigg\}\nn\\
 &= \sum_{i=1}^N E_{\mathbf{t}}\bigg\{\idc{T>\tau_i}\int_{\tau_i}^T\frac{1}{2}(\alpha_i(s;\omega))^2ds|\mathcal{B}_{\tilde{\tau}}\bigg\},\nn
\end{align}
giving rise to the criterion
\begin{eqnarray}
\mathcal{J}(T):= \mathop{\sup_{\mathbf{t}=(\tau_1,\ldots,\tau_N)\in(\ol{\mathbf{R}}_+)^N}}_{\tilde{\tau}<\infty} \text{essup} \sum_{i=1}^N E_{\mathbf{t}}\bigg\{\idc{T>\tau_i}\int_{\tau_i}^T\frac{1}{2}(\alpha_i(s;\omega))^2ds|\mathcal{B}_{\tilde{\tau}}\bigg\},\nn
\end{eqnarray}
which can be upper bounded by the criterion in \eqref{JKL} as follows:
\begin{align}
\label{UB}
\mathcal{J}(T) \le &   \mathop{\sup_{\mathbf{t}=(\tau_1,\ldots,\tau_N)\in(\ol{\mathbf{R}}_+)^N}}_{\tilde{\tau}<\infty} \sum_{i=1}^N \text{essup} E_{\mathbf{t}}\bigg\{\idc{T>\tau_i}\int_{\tau_i}^T\frac{1}{2}(\alpha_i(s;\omega))^2ds|\mathcal{B}_{\tilde{\tau}}\bigg\}\\ 
=&  \mathop{\sup_{\mathbf{t}=(\tau_1,\ldots,\tau_N)\in(\ol{\mathbf{R}}_+)^N}}_{\tilde{\tau}<\infty}  \sum_{i=1}^N \text{essup} E_{\mathbf{t}}\bigg\{\idc{T>\tau_i}(c_i)^{-2}\int_{\tau_i}^T\frac{1}{2}(\alpha_1(s;\omega))^2ds |\mathcal{B}_{\tilde{\tau}}\bigg\} \nn\\ 
\nonumber  \le & N    \mathop{\sup_{\mathbf{t}=(\tau_1,\ldots,\tau_N)\in(\ol{\mathbf{R}}_+)^N}}_{\tilde{\tau}<\infty} \text{essup} E_{\mathbf{t}}\bigg\{\idc{T>\tau_i}\int_{\tau_i}^T\frac{1}{2}(\alpha_1(s;\omega))^2ds |\mathcal{B}_{\tilde{\tau}}\bigg\} \\   \le & N J_{KL}^{(N)}(T).\nn
\end{align}

Similarly, for the false alarms we notice that
\begin{align}
\label{LB}
E_{\mathbf{t}_\infty}\bigg\{\sum_{i=1}^N \int_0^T\frac{1}{2} (\alpha_i(s;\omega))^2ds\bigg\} \ge& N \min_{1\le i\le N} E_{\mathbf{t}_\infty}\bigg\{\int_0^T \frac{1}{2}(\alpha_i(s;\omega))^2ds\bigg\}\\=&NE_{\mathbf{t}_\infty}\bigg\{\int_0^T \frac{1}{2}(c_N^{-1}\alpha_1(s;\omega))^2ds\bigg\}.\nn\end{align}
 Dividing the upper bound in (\ref{UB}) and the lower bound in (\ref{LB}) by the number of sensors $N$ results in the optimization problem given in \eqref{eqnproblemKLG}.

Inspired by the optimality of equalizer rules as seen in \cite{HadjZhanPoor, corr}, we will examine the performance under \eqref{JKL}, of an (almost) equalizer rule $T$ which reacts at exactly the same time regardless of which change takes place first. To be more specific, let us define 
\begin{eqnarray}\label{JjN}
J_j^{(N)}(T) & := & \mathop{\sup_{\mathbf{t}=(\tau_1,\ldots,\tau_N)\in(\ol{\mathbf{R}}_+)^N}}_{\tau_j=\tilde{\tau}<\infty} \mathrm{essup}E_{\mathbf{t}}\bigg\{\idc{T>\tau_j}\int_{\tau_j}^T \frac{1}{2}
 (\alpha_1(s;\omega))^2ds|\mathcal{B}_{\tau_j}\bigg\},
\end{eqnarray}
for all $j\in\{1,\ldots,N\}$. That is, $J_j^{(N)}(T)$ is the detection delay
of the stopping rule $T$ when $\tau_j=\tilde{\tau}$ is the first change-point.
Then for all stopping rules $T\in\mathbb{B}_\gamma$,
\begin{eqnarray}\label{decomp}
J_{KL}^{(N)}(T)=\max\left\{J_1^{(N)}(T),J_2^{(N)}(T),\ldots,J_N^{(N)}(T)\right\}.
\end{eqnarray}
We will restrict our attention to stopping rules $T\in\mathbb{B}_\gamma$ that achieve the
false alarm constraint with equality  \cite{Mous86,Mous04}:
\begin{equation}\label{FAeq}
E_{\mathbf{t}_\infty}\bigg\{\int_0^T\frac{1}{2}(c_N^{-1}\alpha_1(s;\omega))^2ds\bigg\}=\gamma,
\end{equation}
and the ``equalizer rule'' condition:
\begin{equation}\label{EQR}
J_j^{(N)}(T)=J_1^{(N)}(T)+o(1),\,\,\,\forall j\in\{1,2,\ldots,N\},
\end{equation}
as $\gamma\to\infty$. Clearly, from \eqref{decomp} we know that, for such stopping rules we may simplify the performance index $J_{KL}^{(N)}(T)$ as 
\[J_{KL}^{(N)}(T)=J_1^{(N)}(T)+o(1),\]
as $\gamma\to\infty$. While, given the complexity due to the dimensionality of the problem, it seems a formidable task to determine the exactly optimal stopping rule for \eqref{eqnproblemKLG}, asymptotic expansions like \eqref{EQR} will make it possible   to find an asymptotically optimal stopping rule for our problem \eqref{eqnproblemKLG}.

To construct such a candidate stopping for problem \eqref{eqnproblemKLG},  recall that in the case of $N=1$,  
the drift, $\alpha_1(t;\omega_1)$, is measurable with respect to the filtration 
generated by the coordinate mapping $Z_t^{(1)}(\omega)=\omega_1(t)$, $t\ge0$, which we  denote by  $\mathbb{G}^{(1)}=\{\mathcal{G}_t^{(1)}\}_{t\ge0}$.
Then it is known from \cite{Mous04} that the problem in \eqref{eqnproblemKLG}, when $N=1$, is solved by the so-called CUSUM stopping rule: 
\begin{eqnarray}
\label{CUSUMIto}
S_\nu^{(1)}  =  \inf\{t \ge 0~:~y(t)=\nu\},\,\,\,\text{with}\,\,\,
 y(t) =  \sup_{0\le s \le t} \log\frac{dP_{s}}{dP_\infty}\bigg|_{\mathcal{G}_t^{(1)}}.
\end{eqnarray}
where $P_{s}$ is 
the measures generated by $\{Z_t^{1}\}_{t\ge0}$ on $C(\mathbb{R}_+)$, when the change-point $\tau_1=s$; and $P_\infty$ is the Wiener measure on $C(\mathbb{R}_+)$. 
The threshold $\nu>0$ in \eqref{CUSUMIto} is chosen so that $E_\infty\{\int_0^{S_{\nu}^{(1)}}\frac{1}{2}(\alpha(s;\omega_1))^2ds\}=\gamma$. Furthermore, with 
\begin{equation}\label{g} g(\nu)\,:=\,\textrm{e}^\nu-\nu-1,\end{equation}
 the threshold $\nu$ is the unique positive solution to $g(\nu)=\gamma$, and the optimal detection delay is given by\begin{eqnarray}
\nn J_{KL}^{(1)}(S_{\nu}^{(1)})=E_0\bigg\{\int_0^{S_{\nu}^{(1)}}\frac{1}{2}(\alpha_1(s;\omega_1))^2ds\bigg\}=
g(-\nu).
\end{eqnarray}

In the proof in  \cite{Mous04}  that proves the optimality of the CUSUM stopping rule \eqref{CUSUMIto} to the  one-dimensional equivalent of \eqref{eqnproblemKLG}, one vital assumption  is necessary for the finiteness (see Theorem 1 of \cite{Mous04}) of the CUSUM stopping rule \eqref{CUSUMIto} and the martingale properties of stochastic integrals, i.e., for $\tau_1\in\{0,\infty\}$, and any $t\in\mathbb{R}_+$,
\begin{eqnarray}\nonumber
 P_{\tau_1}\bigg(\int_0^\infty \frac{1}{2}(\alpha_1(s;\omega_1))^2
ds=\infty\bigg) & = &P_{\tau_1}\bigg(\int_0^t \frac{1}{2}(\alpha_1(s;\omega_1))^2
ds<\infty\bigg) =1.
\end{eqnarray}
Intuitively, a physical interpretation of the above assumption is that the
signal over the whole positive real line has infinite energy, and thus the CUSUM stopping rule $S_\nu^{(1)}$ in \eqref{CUSUMIto} will goes off in finite time with probability 1. 

In this work, we will thus assume analogous conditions:  
\begin{assumption}
For all $\tau_i\in\{0,\infty\}$, $i\in\{1,\ldots,N\}$, and any $t\in\mathbb{R}_+$,\begin{eqnarray}
\label{Energyi}
P_{(\tau_1,\ldots,\tau_N)}\left(\int_0^\infty \frac{1}{2}(\alpha_1(s;\omega))^2 ds=\infty\right) & = &1,\\
P_{(\tau_1,\ldots,\tau_N)}\left(\int_0^t\frac{1}{2}(\alpha_1(s;\omega))^2 ds<\infty\right)&=&1.\label{Energyi1}
\end{eqnarray}
\end{assumption}
We comment that  \eqref{Energyi1} is implied by \eqref{novikov}, since the measures $P_{\tau_1,\ldots,\tau_N}|_{\mathcal{B}_t}$ are all equivalent under \eqref{novikov}.

A class of functions $\alpha_i(\cdot;\cdot)$'s that satisfy \eqref{novikov}, \eqref{Energyi} and \eqref{Energyi1} are the uniformly bounded Lipschitz continuous functions that are also uniformly bounded away from 0. It can be easily shown that linear state space models of the form \eqref{example} also satisfy \eqref{novikov}, \eqref{Energyi} and \eqref{Energyi1}.
 
\begin{lemma}\label{linear}
If $\alpha_1(t;\omega)=\sum_{i=1}^N\beta_i\omega_i(t)$, and $\alpha_j(t;\omega)=c_j\alpha_1(t;\omega)$ for some real constants $\beta_i,c_j$ such that $\sum_{i=1}^N\beta_i^2>0$ and $|c_N|\ge\ldots \ge|c_2|\ge c_1=1$, then \eqref{novikov}, \eqref{Energyi} and \eqref{Energyi1} hold.
\end{lemma}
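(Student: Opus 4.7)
The plan is to exploit the linear-Gaussian structure of this model. Set $C := \sum_{i=1}^N c_i^2$ and $\|\beta\|^2 := \sum_{i=1}^N \beta_i^2 > 0$. Since $\alpha_j = c_j \alpha_1$ by hypothesis, $\sum_{i=1}^N (\alpha_i(s;\omega))^2 = C\,(\alpha_1(s;\omega))^2$, so all three assertions reduce to controlling functionals of the scalar process $\alpha_1(s;\omega) = \sum_{i=1}^N \beta_i \omega_i(s)$. Under $P_{\mathbf{t}_\infty}$ the $\omega_i$'s are independent standard Brownian motions, so $\tilde{B}_s := \alpha_1(s;\omega)/\|\beta\|$ is itself a standard one-dimensional Brownian motion and $\int_0^t (\alpha_1(s;\omega))^2\, ds = \|\beta\|^2 \int_0^t \tilde{B}_s^2\, ds$ $P_{\mathbf{t}_\infty}$-a.s.

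I would dispatch \eqref{Energyi1} first: continuity of Brownian paths on the compact interval $[0,t]$ forces $\alpha_1(\cdot;\omega)^2$ to be bounded, so the integral is $P_{\mathbf{t}_\infty}$-a.s.\ finite. For the other regimes $\mathbf{t}\in\{0,\infty\}^N$, I would note that the linear growth of the drifts together with a Benes-type criterion (e.g.\ Corollary 3.5.16 of \cite{KaratzasShreve91}) makes the exponential in \eqref{RNDeri} a true martingale on every finite horizon, so $P_{\mathbf{t}}$ is equivalent to $P_{\mathbf{t}_\infty}$ on $\mathcal{B}_t$ and almost-sure finiteness transfers.

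For \eqref{Energyi} under $P_{\mathbf{t}_\infty}$, Fubini yields $E_{\mathbf{t}_\infty}\bigl[\int_0^T \alpha_1^2\, ds\bigr] = \|\beta\|^2 T^2/2 \to \infty$, and I would upgrade this to $P_{\mathbf{t}_\infty}$-a.s.\ divergence via the law of iterated logarithm (giving $\tilde{B}_s^2 \ge s/2$ on arbitrarily long stretches) or the Kolmogorov 0-1 law. For post-change regimes in which some $\tau_i = 0$, I would project the $N$-dimensional linear SDE onto $\beta$: setting $\phi(t) := \sum_i \beta_i \omega_i(t) = \alpha_1(t;\omega)$, the dynamics \eqref{Itodynamics} give
\begin{equation*}
d\phi(t) \;=\; \Big(\sum_{i:\,\tau_i\le t}\beta_i c_i\Big)\phi(t)\, dt \;+\; \|\beta\|\, dB^{\phi}_t,
\end{equation*}
where $B^{\phi}$ is a standard Brownian motion under $P_{\mathbf{t}}$. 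After the last change point $\phi$ obeys a scalar linear SDE, and a case split on the sign of $\sum_i \beta_i c_i$---exponential explosion when positive, scaled Brownian motion when zero, ergodic Ornstein-Uhlenbeck with a non-degenerate stationary law when negative---gives $\int_0^\infty \phi^2\, ds = \infty$ $P_{\mathbf{t}}$-a.s.\ in every case.

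The genuine difficulty is \eqref{novikov}. The reduction above leaves $E[\exp(\mu \int_0^t \tilde{B}_s^2\, ds)]$ with $\mu = C\|\beta\|^2/2$, and the Feynman-Kac ansatz $f(t,x) = \exp(a(t) + b(t) x^2)$ applied to $f_t + \tfrac{1}{2} f_{xx} + \mu x^2 f = 0$ reduces the problem to a Riccati equation whose closed-form solution gives the classical expression $\bigl(\cos(t\sqrt{2\mu})\bigr)^{-1/2}$, finite only when $t\sqrt{2\mu} < \pi/2$. To cover arbitrary $t$ one must invoke the Benes linear-growth criterion: because $\alpha_i$ grows at most linearly in the path, the exponential local martingale underlying \eqref{RNDeri} is in fact a true martingale on every finite horizon, which is the actual content needed downstream in the paper, and is what I would present as the proof of \eqref{novikov} in the linear model.
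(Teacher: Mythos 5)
For \eqref{Energyi} and \eqref{Energyi1}, your reduction to a one-dimensional linear SDE for $\phi = \alpha_1$ is exactly the paper's move (the paper sets $Y_t := \alpha_t^{(1)}$ and observes it is an Ornstein--Uhlenbeck process with drift coefficient $\lambda = \sum_{i:\tau_i=0}c_i\beta_i$ and volatility $\|\beta\|$). Where you differ is the finish: the paper establishes $\int_0^\infty Y_s^2\,ds=\infty$ a.s.\ in one stroke by computing the Laplace transform $E^{Q^{(\lambda)}}\{\exp(-q\int_0^t Y_s^2\,ds)\}$ in closed form (Girsanov to the driftless case plus a known formula from Mansuy--Yor) and letting $t\to\infty$, whereas you split by the sign of $\lambda$ and invoke the LIL / the ergodic theorem / exponential explosion case by case. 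Both routes are valid; the Laplace-transform argument is uniform in $\lambda$ and so slightly more compact, but your case analysis is equally rigorous and perhaps more transparent.

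On \eqref{novikov}, your observation is actually sharper than what the paper offers, and it exposes a real issue. Under $P_{\mathbf{t}_\infty}$ one has $\alpha_1 = \|\beta\|W$ for a standard Brownian motion $W$ and $\sum_j\alpha_j^2 = C\alpha_1^2$ with $C=\sum_j c_j^2$, so the expectation in \eqref{novikov} is $E\{\exp(\tfrac{C\|\beta\|^2}{2}\int_0^t W_s^2\,ds)\}=(\cos(t\|\beta\|\sqrt{C}))^{-1/2}$, which is \emph{infinite} once $t\ge\pi/(2\|\beta\|\sqrt{C})$. Thus the global Novikov bound \eqref{novikov} genuinely fails in the linear model for large $t$, and no proof of it as stated can exist. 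The paper's treatment of this point is a bare citation to page 234 of Liptser--Shiryaev, which in fact gives martingality of the Radon--Nikodym density via a local Novikov / Bene\v{s} linear-growth criterion, not the global bound \eqref{novikov}. Your proposal to substitute Bene\v{s} is the correct repair for everything the paper uses downstream (measure equivalence, validity of \eqref{RNDeri}), but be explicit that this does not prove \eqref{novikov}; the honest fix is to restate the lemma (and the hypothesis it feeds) in terms of local Novikov or direct equivalence of $P_{\mathbf{t}}|_{\mathcal{B}_t}$ and $P_{\mathbf{t}_\infty}|_{\mathcal{B}_t}$ rather than the global condition \eqref{novikov}.
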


 The optimality of the CUSUM
stopping rule in the presence of only one observation process suggests that a CUSUM
type of stopping rule might display similar optimality properties in the case of
multiple observation processes.
In particular, an intuitively appealing rule, when
the detection of $\tilde{\tau}=\min_i\{\tau_i\}$ is of interest, is the $N$-dimensional variant of the CUSUM stopping rule $T_{\hbar}=\min_i\{T_{h_i}^{(i)}\}$,
where $T_{h_i}^{(i)}$ is the CUSUM stopping rule for the
process $\{Z_t^{(i)}\}_{t \ge 0}$ for $i\in\{1,\ldots,N\}$. That is, we use what is known
as a multi-chart CUSUM stopping rule \cite{Mous06}, which can be written as
\begin{eqnarray}
\label{CUSUMmultichart} T_{\hbar} & = & \inf\bigg\{t \ge 0~:~
\max\bigg\{\frac{y_1(t)}{h_1},\ldots,\frac{y_N(t)}{h_N}\bigg\} \ge 1\bigg\}.
\end{eqnarray}
Here for any $i\in\{1,\ldots,N\}$, $h_i>0$ is the threshold for process $\{y_i(t)\}_{t\ge0}$, and 
$\{y_i(t)\}_{t\ge0}$ is defined as (see \eqref{u process}-\eqref{RNDeri}):
\begin{align}
\label{yti} 
y_i(t)& :=  \sup_{0 \le s \le t}\log
\frac{dP_{(\infty, \ldots, s, \ldots, \infty)}}{dP_{\mathbf{t}_\infty}}\bigg|_{\mathcal{B}_t} =\sup_{0\le s\le t}\{u_i(t)-u_i(s)\}=u_i(t)-m_i(t)\\
&=\int_0^t\alpha_i(s;\omega)d\omega_i(s)-\frac{1}{2}\int_0^t(\alpha_i(s;\omega))^2ds-m_i(t),\nn
\end{align}
where $P_{(\infty, \ldots, s, \ldots, \infty)}$ is corresponding to the change-points $\tau_j=\infty$ for all $j\neq i$ and $\tau_i=s$,
$u_i(t)$ is given in \eqref{u process} and  $m_i(t):=\inf_{0\le s\le t}u_i(s)$.
For any fixed large $\gamma>0$, the $N$-dimensional threshold vector $\hbar=(h_1,\ldots,h_N)\in(\mathbb{R}_+)^N$ is to be determined so that both \eqref{FAeq} and \eqref{EQR} hold.

We will give an analytical characterization of the stopping rule $T_\hbar$ in the next section.

\section{Multi-chart CUSUM stopping rule $T_\hbar$}
In this section, we study several property of the multi-chart CUSUM stopping rule $T_\hbar$ defined in \eqref{CUSUMmultichart}. First, we show its a.s. finiteness, and compute the Kulback-Leibler detection and false-alarm divergence of $T_\hbar$. We then establish an upper bound for  $\inf_{T\in\mathbb{B}_\gamma}J_{KL}^{(N)}(T)$, which  will be used in Section 5 for asymptotic analysis. Throughout this section, we will denote by $Y(t):=(y_1(t),\ldots,y_N(t))$ for $y_i(t)$'s defined in \eqref{yti}.

Let us begin by introducing a class of functions that will be used  in the main results. For any fixed vectors $\mathcal{S}=(\mathcal{S}_1,\ldots,\mathcal{S}_N)\in\{\pm1\}^N$ and $\hbar\in(\mathbb{R}_+)^N$, we define $f_{\mathcal{S},\hbar}(\cdot)$ as a $C^2$ function on $D_\hbar:=[0,h_1]\times\ldots\times[0,h_N]$, satisfying the following PDE:\begin{subnumcases}{}\label{pde1}\sum_{i=1}^N \frac{1}{c_i^{2}}\frac{\partial^2 f_{\mathcal{S},\hbar}}{\partial y_i^2}+\sum_{i=1}^N\frac{\mathcal{S}_i}{c_i^2}\frac{\partial f_{\mathcal{S},\hbar}}{\partial{y_i}}=-1,\,\,\,\forall (y_1,\ldots,y_N)\in D_\hbar,\\
\label{cond1}\frac{\partial f_{\mathcal{S},\hbar}}{\partial y_i}\bigg|_{y_i=0}=f_{\mathcal{S},\hbar}|_{y_i=h_i}=0.
\end{subnumcases}

Using  Feynman-Kac representation of $f_{\mathcal{S},\hbar}$, we obtain the following useful monotonicity properties:
\begin{lemma}\label{pde lemma}
For any $\mathcal{S}=(\mathcal{S}_1,\ldots,\mathcal{S}_N)\in\{\pm1\}^N$, the function $f_{\mathcal{S},\hbar}$  defined in \eqref{pde1}-\eqref{cond1} are decreasing in $y_i$ for each $i$. If $\mathcal{S}'=(\mathcal{S}'_1,\ldots\mathcal{S}_N')\in\{\pm1\}^N$ such that $\mathcal{S}_i'\le\mathcal{S}_i$ for any $i=1,\ldots, N$, then for all $(y_1,\ldots,y_N)\in D_\hbar$,
\begin{equation}
0\le f_{\mathcal{S},\hbar}(y_1,\ldots,y_N)\le f_{\mathcal{S}',\hbar}(y_1,\ldots,y_N)\le f_{\mathcal{S}',\hbar}(0,\ldots,0)<\infty.
\end{equation}
Moreover, $f_{\mathcal{S},\hbar}(0,\ldots,0)$ is strictly increasing in $h_i$ for each $i$.
\end{lemma}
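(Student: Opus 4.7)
The plan is to exploit a Feynman--Kac representation of $f_{\mathcal{S},\hbar}$ and then read off all of the monotonicity statements from pathwise coupling. Since \eqref{pde1} contains no mixed second-order terms, the associated diffusion has independent components: on an auxiliary probability space carrying independent standard Brownian motions $B^{(1)},\ldots,B^{(N)}$, let $X^{(i)}$ solve
\[dX_t^{(i)}=\frac{\mathcal{S}_i}{c_i^2}\,dt+\frac{\sqrt{2}}{|c_i|}\,dB_t^{(i)}+dL_t^{(i)},\qquad X_0^{(i)}=y_i,\]
where $L^{(i)}$ is the local time at $0$ (enforcing the Neumann condition \eqref{cond1} at $y_i=0$), and take $\{y_i=h_i\}$ as an absorbing boundary (enforcing the Dirichlet condition). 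Set $\tau_\hbar^{(i)}:=\inf\{t\ge 0:X_t^{(i)}=h_i\}$ and $\tau_\hbar:=\min_{1\le i\le N}\tau_\hbar^{(i)}$. The infinitesimal generator of $X=(X^{(1)},\ldots,X^{(N)})$ in the interior of $D_\hbar$ is exactly the elliptic operator on the left-hand side of \eqref{pde1}, so by the Feynman--Kac formula applied to the source $-1$,
\[f_{\mathcal{S},\hbar}(y_1,\ldots,y_N)=\mathbb{E}_{(y_1,\ldots,y_N)}[\tau_\hbar].\]

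Given this representation, every claim reduces to an elementary coupling argument. For \emph{nonnegativity and finiteness}, each $\tau_\hbar^{(i)}$ is the first time a one-dimensional reflected diffusion with nondegenerate volatility on $[0,h_i]$ reaches its upper endpoint; a standard scale/speed computation gives $\mathbb{E}_y[\tau_\hbar^{(i)}]<\infty$ uniformly over $y\in D_\hbar$, and hence $0\le f_{\mathcal{S},\hbar}(y)\le \mathbb{E}_y[\tau_\hbar^{(i)}]<\infty$. For \emph{monotonicity in each $y_i$}, couple two copies of $X$ started at $y$ and $y'$ with $y\le y'$ componentwise using common Brownian drivers: the Skorokhod reflection lemma applied coordinatewise produces a pathwise ordering $X_t\le X_t'$ in every component and every $t\ge 0$, so the copy started higher reaches its ceiling no later, and taking expectations gives $f_{\mathcal{S},\hbar}(y')\le f_{\mathcal{S},\hbar}(y)$.

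For \emph{monotonicity in $\mathcal{S}$}, if $\mathcal{S}'_i\le \mathcal{S}_i$ for every $i$, couple the diffusions with drift vectors $\mathcal{S}$ and $\mathcal{S}'$ using common Brownian drivers and the same starting point: the Skorokhod lemma again yields the coordinatewise pathwise ordering $X^{\mathcal{S}'}_t\le X^{\mathcal{S}}_t$, whence $\tau_\hbar^{\mathcal{S}}\le \tau_\hbar^{\mathcal{S}'}$ and $f_{\mathcal{S},\hbar}\le f_{\mathcal{S}',\hbar}$ pointwise on $D_\hbar$. Chained with the previous step this yields the full inequality claimed in the lemma, the upper bound $f_{\mathcal{S}',\hbar}(0,\ldots,0)$ being finite by the first step.

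Finally, for \emph{strict monotonicity of $f_{\mathcal{S},\hbar}(0,\ldots,0)$ in $h_i$}, fix $h_i'>h_i$ and realize both setups on the same probability space (same $\mathcal{S}$ and same thresholds $h_j$, $j\ne i$): pathwise $\tau_\hbar\le \tau_{\hbar'}$, and on the event $\{\tau_\hbar^{(i)}<\min_{j\ne i}\tau_\hbar^{(j)}\}$ one has $\tau_\hbar<\tau_{\hbar'}$ because in the enlarged box the $i$th coordinate must still diffuse from $h_i$ up to $h_i'$, which takes strictly positive time almost surely. Since the $\tau_\hbar^{(j)}$'s are independent and each has a continuous distribution, this event has strictly positive probability under $\mathbb{P}_{(0,\ldots,0)}$, and taking expectations gives the strict inequality. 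The main technical point worth attention is the coordinatewise Skorokhod comparison for reflected diffusions with distinct drifts; however this is a standard consequence of the monotonicity of the Skorokhod map together with uniqueness of strong solutions for Lipschitz SDEs, so no essentially nonstandard machinery is required.
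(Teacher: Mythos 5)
Your proposal is correct and takes essentially the same route as the paper: identify $f_{\mathcal{S},\hbar}$ via Feynman--Kac as the expected exit time $\mathbb{E}_{y}[\tau_\hbar]$ of a product of independent reflected drifted Brownian motions on $[0,h_i]$, then read off all the monotonicities by comparison. The paper's proof of monotonicity in the starting point uses the strong Markov shift $T_{h_1}^{X}=T_x^{X}+T_{h_1}^{X}\circ\theta(T_x^X)$ and, for monotonicity in $\mathcal{S}$, cites a pathwise Skorokhod comparison (Lemma~3 of \cite{Hadj06}); you phrase both via the monotonicity of the Skorokhod map under a common Brownian driver, which is the same content. For strict monotonicity in $h_i$, the paper decomposes at the first passage through $h_i'$ via the strong Markov property and isolates a small auxiliary fact (that for independent $X,Y$ with $\mathrm{Supp}(X)=\mathbb{R}_+$ one has $P(X<Y)<1$) to ensure the relevant event has positive probability; you instead couple the processes in the small and large boxes and argue directly that $\tau_\hbar<\tau_{\hbar'}$ on $\{\tau_\hbar^{(i)}<\min_{j\ne i}\tau_\hbar^{(j)}\}$, which is equivalent.

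One small imprecision worth fixing: you assert that the event $\{\tau_\hbar^{(i)}<\min_{j\ne i}\tau_\hbar^{(j)}\}$ has positive probability ``since the $\tau_\hbar^{(j)}$'s are independent and each has a continuous distribution.'' Independence plus continuity alone is not enough (two independent continuous variables with disjoint supports give probability zero); what you actually need, and what the paper's auxiliary lemma records, is that each one-dimensional passage time $\tau_\hbar^{(j)}$ of the nondegenerate reflected diffusion started from $0$ has full support on $(0,\infty)$. That is immediate here, but it should be stated.
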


The following technical lemma will also be useful for localization argument later.
\begin{lemma}\label{finite lemma}
For any fixed $\tau,\tau'\in\mathbb{R}_+$, and a fixed $\mathbf{t}=(\tau_1,\ldots,\tau_N)\in(\ol{\mathbb{R}}_+)^N$, we have
\[P_{\mathbf{t}}\bigg(\int_\tau^\infty\frac{1}{2}(\alpha_1(s;\omega))^2ds=\infty|\mathcal{B}_{\tau'}\bigg)=1.\]
\end{lemma}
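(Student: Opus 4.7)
My plan is to reduce the statement to the unconditional claim $P_{\mathbf{t}}\bigl(\int_0^\infty (\alpha_1(s;\omega))^2 ds = \infty\bigr)=1$ for arbitrary $\mathbf{t}\in(\ol{\mathbb{R}}_+)^N$, and then to prove that by a localized Girsanov argument. Two easy reductions handle the first step: Assumption~3 makes $P_{\mathbf{t}}$ and $P_{\mathbf{t}_\infty}$ equivalent on every $\mathcal{B}_t$ with $t<\infty$, so \eqref{Energyi1} upgrades to $P_{\mathbf{t}}\bigl(\int_0^s(\alpha_1)^2 dr<\infty\bigr)=1$ for each finite $s$; consequently the events $\{\int_\tau^\infty (\alpha_1)^2 ds=\infty\}$ and $\{\int_0^\infty (\alpha_1)^2 ds=\infty\}$ coincide $P_{\mathbf{t}}$-a.s., and once the unconditional probability is $1$ the conditional probability given $\mathcal{B}_{\tau'}$ is automatically $1$ $P_{\mathbf{t}}$-a.s. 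For $\mathbf{t}$ with every $\tau_i\in\{0,\infty\}$ the unconditional claim is exactly \eqref{Energyi}, so the substance of the argument is the extension to general $\mathbf{t}$.

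I would then decompose $\{\int_0^\infty (\alpha_1)^2 ds<\infty\}=\bigcup_{M=1}^\infty B^M$ with $B^M:=\{\int_0^\infty (\alpha_1)^2 ds\le M\}$, reducing the task to $P_{\mathbf{t}}(B^M)=0$ for each fixed $M$. The main tool is the stopping time $\rho_M:=\inf\{t\ge 0:\int_0^t(\alpha_1(s;\omega))^2 ds>M\}$; continuity of $s\mapsto\int_0^s(\alpha_1)^2 dr$ yields $\rho_M\ge T$ on $B^M_T:=\{\int_0^T(\alpha_1)^2 ds\le M\}\in\mathcal{B}_T$ and $\rho_M=\infty$ on $B^M$. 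Assumption~1 together with $|c_i|\ge 1$ gives $\sum_{i=1}^N (\alpha_i(s;\omega))^2=(\alpha_1(s;\omega))^2\sum_i c_i^{-2}\le N(\alpha_1(s;\omega))^2$, so the integrated bracket of the log-likelihood exponent, stopped at $\rho_M$, is uniformly bounded by $NM$.

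Setting $L_t:=\frac{dP_{\mathbf{t}}}{dP_{\mathbf{t}_\infty}}\big|_{\mathcal{B}_t}$ as in \eqref{RNDeri}, the bracket bound makes $L_{\cdot\wedge\rho_M}$ a true $P_{\mathbf{t}_\infty}$-martingale (Novikov is trivially satisfied for the stopped exponential), and a routine second-moment estimate bounds $\sup_{t\ge 0}E_{\mathbf{t}_\infty}[(L_{t\wedge\rho_M})^2]$ by $e^{NM}$. Hence $L_{\cdot\wedge\rho_M}$ is uniformly integrable under $P_{\mathbf{t}_\infty}$ and converges a.s.\ and in $L^1(P_{\mathbf{t}_\infty})$ to a $P_{\mathbf{t}_\infty}$-a.s.\ finite limit $L^*$. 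Since $B^M_T\subseteq\{\rho_M\ge T\}$ forces $L_T=L_{T\wedge\rho_M}$ on $B^M_T$, the tower property gives $P_{\mathbf{t}}(B^M_T)=E_{\mathbf{t}_\infty}[L_{T\wedge\rho_M}\mathbf{1}_{B^M_T}]=E_{\mathbf{t}_\infty}[L^*\mathbf{1}_{B^M_T}]$; sending $T\to\infty$ and applying dominated convergence yields $P_{\mathbf{t}}(B^M)=E_{\mathbf{t}_\infty}[L^*\mathbf{1}_{B^M}]=0$, because $P_{\mathbf{t}_\infty}(B^M)=0$ by \eqref{Energyi}. Taking $M\to\infty$ then gives $P_{\mathbf{t}}(\int_0^\infty(\alpha_1)^2 ds<\infty)=0$.

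The main obstacle is precisely this tail-event issue: the event in question sits in $\mathcal{B}_\infty$, on which $P_{\mathbf{t}}$ and $P_{\mathbf{t}_\infty}$ need not be mutually absolutely continuous, so one cannot simply transfer \eqref{Energyi} via a single Radon--Nikodym derivative. Localization at $\rho_M$, combined with the $|c_i|\ge 1$ bound on $\sum(\alpha_i)^2$, is the essential ingredient that makes the stopped likelihood ratio uniformly integrable and allows the $P_{\mathbf{t}_\infty}$-negligibility of each $B^M$ to be transferred to $P_{\mathbf{t}}$-negligibility.
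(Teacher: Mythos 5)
Your proof is correct, but it takes a genuinely different route from the paper's. The paper changes the reference measure: instead of comparing $P_{\mathbf{t}}$ to $P_{\mathbf{t}_\infty}$, it compares it to $P_{\mathbf{t}'}$ where $\mathbf{t}'$ is obtained from $\mathbf{t}$ by replacing each \emph{finite} $\tau_i$ with $0$ and leaving the infinite ones at $\infty$. By \eqref{RNDeri} the density $\frac{dP_{\mathbf{t}}}{dP_{\mathbf{t}'}}\big|_{\mathcal{B}_t}=\exp\big(-\sum_{i\le r}u_i(t\wedge\tau_i)\big)$ becomes \emph{constant} (and strictly positive) in $t$ as soon as $t$ exceeds the largest finite $\tau_i$; this single, $\mathcal{B}_{\max_i\tau_i}$-measurable density makes $P_{\mathbf{t}}$ and $P_{\mathbf{t}'}$ mutually absolutely continuous on all of $\mathcal{B}_\infty$, so the full-measure event under $P_{\mathbf{t}'}$ (which is in the $\{0,\infty\}^N$ class covered by \eqref{Energyi}) transfers to $P_{\mathbf{t}}$ immediately, with no localization. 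You instead compare to $P_{\mathbf{t}_\infty}$, under which the density never stabilizes and the two laws need not agree on $\mathcal{B}_\infty$; you then repair this with the stopping time $\rho_M$, the $|c_i|\ge 1$ bound to control the stopped bracket by $NM$, the resulting second-moment/UI estimate for $L_{\cdot\wedge\rho_M}$, and a $T\to\infty$ passage. Both arguments are valid; yours is more laborious but makes the tail-$\sigma$-algebra obstruction explicit, whereas the paper dissolves that obstruction at the outset by a judicious choice of comparison measure tailored to the fixed $\mathbf{t}$.
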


\begin{theorem}\label{exp cusum}
For any fixed vector $\hbar\in \mathbb{R}_+^N, \mathbf{t}=(\tau_1,\ldots,\tau_N)\in\ol{\mathbb{R}}_+^N$ such that  $\tilde{\tau}=\min_i\{\tau_i\}<\infty$,  the multi-chart CUSUM stopping rule $T_\hbar$ is a.s. finite: \begin{eqnarray}
\label{finite1}P_{\mathbf{t}}(T_\hbar<\infty|\mathcal{B}_{\tilde{\tau}})&=&1,\,\,\,P_{\mathbf{t}}\text{-a.s.}
\end{eqnarray}
Define  $\mathcal{S}(\mathbf{t};s):=(\varphi(\tau_1,s),\ldots,\varphi(\tau_N,s))\in\{\pm1\}^N$ with $\varphi(\tau,s):=\idc{\tau< s}-\idc{\tau\ge s}$ for $\tau\in\overline{\mathbb{R}}_+$ and $s\in\mathbb{R}_+$. Let us relabel (without repetition)  the change-points $\tau_1,\ldots,\tau_N$ in increasing order:  for some $l\in\{1,\ldots,N-1\}$,
\[\tilde{\tau}=\tau_{(1)}<\tau_{(2)}<\ldots<\tau_{(l)}<\infty=\tau_{(l+1)}.\]
Then we have
\begin{multline}
\label{KL1}E_{\mathbf{t}}\bigg\{\idc{T_\hbar\ge\tilde{\tau}}\int_{\tilde{\tau}}^{T_\hbar}\frac{1}{2}(\alpha_1(s;\omega))^2ds|\mathcal{B}_{\tilde{\tau}}\bigg\}\\
=\idc{T_\hbar\ge\tilde{\tau}}\sum_{j=1}^{l}E_{\mathbf{t}}\{[f_{\mathcal{S}(\mathbf{t};\tau_{(j)}),\hbar}(Y({\tau}_{(j)}\wedge T_\hbar))-f_{\mathcal{S}(\mathbf{t};\tau_{(j)}),\hbar}(Y({\tau}_{(j+1)}\wedge T_\hbar))]|\mathcal{B}_{\tilde{\tau}}\}.
\end{multline}
Similarly, for a fixed $t\in\mathbb{R}_+$, we have 
\begin{eqnarray}
\label{finite2}P_{\mathbf{t_\infty}}(T_\hbar<\infty|\mathcal{B}_{{t}})&=&1,\,\,\,P_{\mathbf{t}_\infty}\text{-a.s.}\\
\,\,\,\label{KL2}E_{\mathbf{t}_\infty}\bigg\{\idc{T_\hbar\ge t}\int_{{t}}^{T_\hbar}\frac{1}{2}(\alpha_1(s;\omega))^2ds|\mathcal{B}_{t}\bigg\}&=&f_{\mathcal{S}(\mathbf{t}_\infty;t),\hbar}(y_1(t),\ldots,y_N(t))\idc{T_\hbar\ge t}.
\end{eqnarray}

\end{theorem}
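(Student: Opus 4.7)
The plan is to apply It\^o's formula to $f_{\mathcal{S},\hbar}(Y(t))$ on each sub-interval between consecutive change points, where the drift sign vector of $Y$ stays constant, and to sum the telescoping pieces, invoking the PDE \eqref{pde1} and boundary conditions \eqref{cond1}.

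\textbf{Finiteness.} For \eqref{finite2}, under $P_{\mathbf{t}_\infty}$ each $u_i$ is a continuous local martingale with quadratic variation $c_i^{-2}\int_0^\cdot\alpha_1^2\,ds\to\infty$ almost surely by \eqref{Energyi} and Lemma \ref{finite lemma}. A Dambis--Dubins--Schwarz time-change exhibits $u_i$ as a Brownian motion run for infinite time, hence oscillating unboundedly, so its reflection $y_i=u_i-m_i$ exits $[0,h_i]$ in finite time and $T_\hbar\le T_{h_i}^{(i)}<\infty$. For \eqref{finite1}, choose $i$ with $\tau_i=\tilde\tau<\infty$; Girsanov rewrites \eqref{u process} as $u_i(t)-u_i(\tilde\tau)=\tfrac12\int_{\tilde\tau}^t\alpha_i^2\,ds+\int_{\tilde\tau}^t\alpha_i\,d\tilde w_i$ under $P_\mathbf{t}$ for $t>\tilde\tau$, whose drift diverges by Lemma \ref{finite lemma}, forcing $y_i\to\infty$ and $T_\hbar<\infty$.

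\textbf{It\^o identity.} Relabel the finite change points as $\tau_{(1)}<\cdots<\tau_{(l)}$ and set $\tau_{(l+1)}:=\infty$; on each open sub-interval $(\tau_{(j)},\tau_{(j+1)})$, $\mathcal{S}(\mathbf{t};\cdot)$ is constant, and write $\mathcal{S}^{(j)}$ for this value. Under $P_\mathbf{t}$, Girsanov yields
\[
du_i(s)=\mathcal{S}^{(j)}_i\,\tfrac12\alpha_i(s;\omega)^2\,ds+\alpha_i(s;\omega)\,d\tilde w_i(s)
\]
on this sub-interval, with $\tilde w_1,\ldots,\tilde w_N$ mutually independent Brownian motions since the driving $w^{(i)}$ are. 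Applying It\^o to $f_{\mathcal{S}^{(j)},\hbar}(Y(s))$, the reflection term $\sum_i\partial_{y_i}f\,dm_i$ vanishes because $dm_i$ charges only $\{y_i=0\}$ where $\partial_{y_i}f=0$ by \eqref{cond1}; substituting $\alpha_i^2=c_i^{-2}\alpha_1^2$ and using \eqref{pde1} collapses the drift of $f_{\mathcal{S}^{(j)},\hbar}(Y)$ into $-\tfrac12\alpha_1^2$. Thus for $\tau_{(j)}\le a\le b\le\tau_{(j+1)}$,
\[
f_{\mathcal{S}^{(j)},\hbar}(Y(b))-f_{\mathcal{S}^{(j)},\hbar}(Y(a))=-\tfrac12\int_a^b\alpha_1(s;\omega)^2\,ds+(\text{local-martingale increment}).
\]
Taking $a=\tau_{(j)}\wedge T_\hbar$, $b=\tau_{(j+1)}\wedge T_\hbar$, applying $E_\mathbf{t}[\,\cdot\mid\mathcal{B}_{\tilde\tau}]$ (after localising the martingale term) and summing over $j=1,\ldots,l$ telescopes the integrals into $\tfrac12\int_{\tilde\tau}^{T_\hbar}\alpha_1^2\,ds$ on $\{T_\hbar\ge\tilde\tau\}$, because the Dirichlet condition in \eqref{cond1} gives $f_{\mathcal{S},\hbar}(Y(T_\hbar))=0$. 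This delivers \eqref{KL1}; the identity \eqref{KL2} is the same calculation applied to the single interval $[t,T_\hbar]$ under $P_{\mathbf{t}_\infty}$, where $\mathcal{S}(\mathbf{t}_\infty;\cdot)\equiv(-1,\ldots,-1)$ throughout.

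\textbf{Main obstacle.} The real work is justifying the expectation identity from the local-martingale identity. I would localise via $\sigma_n:=\inf\{s\ge 0:\int_0^s\alpha_1(r;\omega)^2\,dr\ge n\}\wedge n$, use the uniform bound $|f_{\mathcal{S},\hbar}|\le f_{(-1,\ldots,-1),\hbar}(0,\ldots,0)<\infty$ from Lemma \ref{pde lemma} for dominated convergence on the telescoped boundary values, and rely on \eqref{Energyi} plus Lemma \ref{finite lemma} to guarantee $\sigma_n\to\infty$ almost surely under both measures. The reflection at the boundary $\{y_i=0\}$ is absorbed cleanly by the Neumann condition, so standard It\^o --- no Tanaka extension --- suffices.
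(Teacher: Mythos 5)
Your structure---apply It\^o on each constant-sign interval, invoke the PDE \eqref{pde1} to collapse the drift, use the Neumann condition to kill $dm_i$, localize by bounding the accumulated energy, and pass to the limit via dominated convergence using the uniform bound from Lemma~\ref{pde lemma}---matches the paper's argument closely, and the ``Main obstacle'' paragraph correctly identifies where the real work lies. The telescoping and \eqref{KL1}/\eqref{KL2} identities are sound.

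There is, however, a genuine error in your finiteness argument for \eqref{finite2}. Under $P_{\mathbf{t}_\infty}$, the process $u_i(t)=\int_0^t\alpha_i\,d\omega_i-\tfrac12\int_0^t\alpha_i^2\,ds$ is \emph{not} a continuous local martingale: the coordinate process $\omega_i$ is Brownian motion under $P_{\mathbf{t}_\infty}$, so $u_i$ has a strictly negative drift $-\tfrac12\alpha_i^2\,ds$ and is a (super)martingale. A DDS time-change therefore exhibits $u_i$ as a Brownian motion with drift $-\tfrac12$ run over $\langle\int\alpha_i\,d\omega_i\rangle_t$, which drifts to $-\infty$; the claim that $u_i$ ``oscillates unboundedly'' is false, and the reason $y_i=u_i-m_i$ exits $[0,h_i]$ in this regime is more delicate (it is the positive recurrence of the reflected BM with negative drift, not unbounded oscillation of $u_i$). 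Your argument for \eqref{finite1}, by contrast, is essentially right since after $\tilde\tau$ the drift is $+\tfrac12\alpha_i^2$ and $u_i\to\infty$. The paper avoids this pitfall altogether: it does not prove finiteness separately but extracts it from the localized It\^o identity. After passing $n\to\infty$ by bounded convergence, the resulting inequality $f_{\mathcal{S},\hbar}(0,\ldots,0)\ge E_\mathbf{t}\{\idc{T_\hbar=\infty}\int_{\tau_{(l)}}^\infty\tfrac12\alpha_1^2\,ds\,|\,\mathcal{B}_{\tilde\tau}\}$ forces $\int_{\tau_{(l)}}^\infty\alpha_1^2\,ds=0$ on $\{T_\hbar=\infty\}$, contradicting Lemma~\ref{finite lemma}; the same contradiction gives \eqref{finite2}. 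You should either adopt this contradiction argument for \eqref{finite2} or replace the DDS claim with the correct recurrence statement for reflected BM with negative drift.
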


 \begin{proof}
 From Lemma \ref{finite lemma}, the stopping rules defined as
\[t_n:=\inf\left\{t\ge\tilde{\tau}\,:\,\int_{\tilde{\tau}}^{t}\frac{1}{2}(\alpha_1(s;\omega))^2ds\ge n\right\},\,\,\,n\ge1,\]
are $P_{\mathbf{t}}$-a.s. finite, and so is $T_\hbar^n:=T_\hbar\wedge t_n$.

Let us apply It\^o's lemma to $\{f_{\mathcal{S}(\mathbf{t};\tau_{(j)}),\hbar}(Y(t))\}_{\tau_{(j)}\wedge T_\hbar^n\le t< \tau_{(j+1)}\wedge T_\hbar^n}$ for any $j\in\{1,\ldots,l\}$. Using \eqref{pde1} we have that,
\begin{align}
\label{Ito's expansion}
&E_{\mathbf{t}}\{f_{\mathcal{S}(\mathbf{t};\tau_{(j)}),\hbar}(Y(\tau_{(j+1)}\wedge T_\hbar^n))-f_{\mathcal{S}(\mathbf{t};\tau_{(j)}),\hbar}(Y(\tau_{(j)}\wedge T_\hbar^n))|\mathcal{B}_{\tilde{\tau}}\}\idc{T_\hbar^n\ge\tilde{\tau}}\\
=&E_{\mathbf{t}}\bigg\{\int_{\tau_{(j)}\wedge T_\hbar^n}^{\tau_{(j+1)}\wedge T_\hbar^n}\sum_{i=1}^N[\frac{\partial f_{\mathcal{S}(\mathbf{t};\tau_{(j)}),\hbar}}{\partial y_i}dy_i(s)+\frac{1}{2}\frac{\partial^2 f_{\mathcal{S}(\mathbf{t};\tau_{(j)}),\hbar}}{\partial y_i^2}(\alpha_i(s;\omega))^2ds]|\mathcal{B}_{\tilde{\tau}}\bigg\}\idc{T_\hbar^n\ge\tilde{\tau}}.\nn
\end{align}
From \eqref{Itodynamics}, \eqref{u process} and \eqref{yti} we know that, for all $i\in\{1,\ldots,N\}$, $dy_i(t)=du_i(t)-dm_i(t)$ and 
\begin{equation}du_i(s)=\alpha_i(s;\omega)dB_i(s)+\frac{1}{2}\varphi(\tau_i,s)(\alpha_i(s;\omega))^2ds,\nn\end{equation}
where $B_i(t)=\omega_i(t)-\idc{t>\tau_i}\int_{\tau_i}^t\alpha_i(s;\omega)ds$, $t\ge0$, is a $\mathbb{B}$-standard Brownian motion under $P_{\mathbf{t}}$. On the other hand, from Assumption \ref{assum1p}, we have for all $i\in\{1,\ldots,N\}$ that
\[du_i(s)=\alpha_i(s;\omega)dB_i(s)+\frac{1}{2}\varphi(\tau_i,s)(c_i^{-1}\alpha_1(s;\omega))^2ds.\]
By substituting the above formula into 
\eqref{Ito's expansion}, we obtain 
\begin{align}\label{BB}
&E_{\mathbf{t}}\bigg\{\int_{\tau_{(j)}\wedge T_\hbar^n}^{\tau_{(j+1)}\wedge T_\hbar^n}[\frac{1}{2}\sum_{i=1}^N(\frac{\varphi(\tau_i,\tau_{(j)})}{c_i^2}\frac{\partial f_{\mathcal{S}(\mathbf{t};\tau_{(j)}),\hbar}}{\partial y_i}+\frac{1}{c_i^2}\frac{\partial^2 f_{\mathcal{S}(\mathbf{t};\tau_{(j)}),\hbar}}{\partial y_i^2})(\alpha_1(s;\omega))^2ds\\
+&\sum_{i=1}^N(\frac{\partial f_{\mathcal{S}(\mathbf{t};\tau_{(j)}),\hbar}}{\partial y_i}\alpha_i(s;\omega)dB_i(s)-\frac{\partial f_{\mathcal{S}(\mathbf{t};\tau_{(j)}),\hbar}}{\partial y_i}dm_i(s))]|\mathcal{B}_{\tilde{\tau}}\bigg\}\idc{T_\hbar^n\ge\tilde{\tau}}\nn\\
=&E_{\mathbf{t}}\bigg\{\int_{\tau_{(j)}\wedge T_\hbar^n}^{\tau_{(j+1)}\wedge T_\hbar^n}[\frac{1}{2}(\alpha_1(s;\omega))^2ds\nn\\
+&\sum_{i=1}^N(\frac{\partial f_{\mathcal{S}(\mathbf{t};\tau_{(j)}),\hbar}}{\partial y_i}\alpha_i(s;\omega)dB_i(s)-\frac{\partial f_{\mathcal{S}(\mathbf{t};\tau_{(j)}),\hbar}}{\partial y_i}dm_i(s))]|\mathcal{B}_{\tilde{\tau}}\bigg\}\idc{T_\hbar^n\ge\tilde{\tau}},\nn\end{align}
where we used \eqref{pde1} in the equality.
Notice that on the event $\{T_\hbar^n<\tau_{(j)}\}$, the integral inside the above expectation is 0; and on the event  $\{T_\hbar^n\ge\tilde{\tau}\}\cap\{T_\hbar^n\ge\tau_{(j)}\}=\{T_\hbar^n\ge\tau_{(j)}\}$, we have 
\begin{multline*}E_{\mathbf{t}}\bigg\{\int_{\tau_{(j)}\wedge T_\hbar^n}^{\tau_{(j+1)}\wedge T_\hbar^n}(\alpha_i(s;\omega))^2\bigg(\frac{\partial f_{\mathcal{S}(\mathbf{t};\tau_{(j)}),\hbar}}{\partial y_i}\bigg)^2ds|\mathcal{B}_{\tilde{\tau}}\bigg\}\\
=c_i^{-2}E_{\mathbf{t}}\bigg\{\int_{\tau_{(j)}\wedge T_\hbar^n}^{\tau_{(j+1)}\wedge T_\hbar^n}(\alpha_1(s;\omega))^2\bigg(\frac{\partial f_{\mathcal{S}(\mathbf{t};\tau_{(j)}),\hbar}}{\partial y_i}\bigg)^2ds|\mathcal{B}_{\tilde{\tau}}\bigg\}
\le c_i^{-2}n\max_{1\le k\le l}\|f_{\mathcal{S}(\mathbf{t};\tau(k)),\hbar}\|^2,\end{multline*}
which is finite. Here $\|f\|:=\max_{1\le i\le N}\sup_{y\in D_\hbar}|\frac{\partial f}{\partial y_i}|$ for any $f\in C^1(D_\hbar)$. We have that the conditional expectation of the stochastic integral in \eqref{BB} is 0. Moreover,  
since $f_{\mathcal{S}(\mathbf{t};\tau_{(j)}),\hbar}(\cdot)$ satisfies the Neumann condition \eqref{cond1}, and the measure $dm_i(s)=0$ off the random set $\{s\,:\,y_i(s)=0\}$, we have that $\int_{\tau_{(j)}\wedge T_\hbar^n}^{\tau_{(j+1)}\wedge T_\hbar^n}\frac{\partial f_{\mathcal{S}(\mathbf{t};\tau_{(j)}),\hbar}}{\partial y_i}dm_i(s)=0$, $P_{\mathbf{t}}$-a.s. for any $i\in\{1,\ldots,N\}$. Therefore, from \eqref{Ito's expansion} and \eqref{BB} we have
\begin{multline}\label{Ito}
E_{\mathbf{t}}\{f_{\mathcal{S}(\mathbf{t};\tau_{(j)}),\hbar}(Y(\tau_{(j+1)}\wedge T_\hbar^n))-f_{\mathcal{S}(\mathbf{t};\tau_{(j)}),\hbar}(Y(\tau_{(j)}\wedge T_\hbar^n))|\mathcal{B}_{\tilde{\tau}}\}\idc{T_\hbar^n\ge\tilde{\tau}}\\
=-E_{\mathbf{t}}\bigg\{\int_{\tau_{(j)}\wedge T_\hbar^n}^{\tau_{(j+1)}\wedge T_\hbar^n}\frac{1}{2}(\alpha_1(s;\omega))^2ds|\mathcal{B}_{\tilde{\tau}}\bigg\}\idc{T_\hbar^n\ge\tilde{\tau}}.
\end{multline}
From Lemma \ref{pde lemma} we know that $0\le f_{\mathcal{S}(\mathbf{t};\tau_{(j)}),\hbar}(y_1,\ldots,y_N)$'s are decreasing in $y_i$ and $f_{\mathcal{S}(\mathbf{t};\tau_{(j)}),\hbar}|_{y_i=h_i}=0$ for each $i\in\{1,\ldots,N\}$, we have that 
\begin{eqnarray*}
&&E_{\mathbf{t}}\bigg\{\int_{\tau_{(j)}\wedge T_\hbar}^{\tau_{(j+1)}\wedge T_\hbar^n}\frac{1}{2}(\alpha_1(s;\omega))^2ds|\mathcal{B}_{\tilde{\tau}}\bigg\}\idc{T_\hbar^n\ge\tilde{\tau}}\\
&=&E_{\mathbf{t}}\{f_{\mathcal{S}(\mathbf{t};\tau_{(j)}),\hbar}(Y(\tau_{(j)}\wedge T_\hbar^n))-f_{\mathcal{S}(\mathbf{t};\tau_{(j)}),\hbar}(Y(\tau_{(j+1)}\wedge T_\hbar^n))|\mathcal{B}_{\tilde{\tau}}\}\idc{T_\hbar^n\ge\tilde{\tau}}\\
&\le& E_{\mathbf{t}}\{f_{\mathcal{S}(\mathbf{t};\tau_{(j)}),\hbar}(Y(\tau_{(j)}\wedge T_\hbar^n))|\mathcal{B}_{\tilde{\tau}}\}\le f_{\mathcal{S}(\mathbf{t};\tau_{(j)}),\hbar}(0,\ldots,0)<\infty.
\end{eqnarray*}
As $n\to\infty$, $t_n$ tends to $\infty$ and $T_\hbar^n$ tends to $T_\hbar$, $P_{\mathbf{t}}$-a.s. Using the bounded convergence theorem we have that 
\begin{align*}\infty>f_{\mathcal{S}(\mathbf{t};\tau_{(l)}),\hbar}(0,\ldots,0)\ge &E_{\mathbf{t}}\bigg\{\int_{\tau_{(l)}\wedge T_\hbar}^{T_\hbar}\frac{1}{2}(\alpha_1(s;\omega))^2ds|\mathcal{B}_{\tilde{\tau}}\bigg\}\idc{T_\hbar\ge\tilde{\tau}}\\
\ge& E_{\mathbf{t}}\bigg\{\idc{T_\hbar=\infty}\int_{\tau_{(l)}}^{\infty}\frac{1}{2}(\alpha_1(s;\omega))^2ds|\mathcal{B}_{\tilde{\tau}}\bigg\}.\end{align*}
We now claim that $P_{\mathbf{t}}(T_\hbar=\infty|\mathcal{B}_{\tilde{\tau}})=0$. This is because, otherwise, from the above inequality we would have
\[\int_{\tau_{(l)}}^\infty(\alpha_1(s;\omega))^2ds=0,\,\,\,P_{\mathbf{t}}\text{-a.s. on }\{T_\hbar=\infty\},\]
which contradicts  Lemma \ref{finite lemma}.
This proves \eqref{finite1}.

Finally, using the bounded convergence theorem and \eqref{finite1}, we can obtain \eqref{KL1} from \eqref{Ito}.

Using similar arguments we can prove \eqref{finite2} and \eqref{KL2}.\qquad\end{proof}

Using Lemma \ref{pde lemma} and Theorem \ref{exp cusum}, we can explicitly compute the $J_{KL}^{(N)}$ index for the multi-chart CUSUM stopping rule $T_\hbar$. To this end, we introduce 
$\mathcal{S}^{(j)}=(\mathcal{S}^{(j)}_1,\ldots,\mathcal{S}^{(j)}_N)$
 with $S_i^{(j)}=\idc{i=j}-\idc{i\not=j}$, for any $i=1,\ldots, N$ and $j=0,1,\ldots, N$.
 We then have

\begin{proposition}\label{NCUSUMEQ}
  For the multi-chart stopping rule $T_\hbar$, we have that
  \begin{eqnarray}\label{worstDelay}
   J_{j}^{(N)}(T_\hbar)=f_{\mathcal{S}^{(j)},\hbar}(0,\ldots,0),\,\,
   j\in\{1,\ldots,N\},
  \end{eqnarray}
  where $J_j^{(N)}$ is defined in \eqref{JjN}.
  \end{proposition}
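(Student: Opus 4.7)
The plan is to combine the explicit representation \eqref{KL1} in Theorem \ref{exp cusum} with the two monotonicity properties of Lemma \ref{pde lemma}: that $f_{\mathcal S,\hbar}$ decreases in each coordinate $y_i$, and that $f_{\mathcal S',\hbar}\ge f_{\mathcal S,\hbar}$ whenever $\mathcal S'\le\mathcal S$ componentwise. Fix $j$ and any $\mathbf t=(\tau_1,\ldots,\tau_N)$ with $\tau_j=\tilde\tau<\infty$, and relabel the distinct change-points as $\tilde\tau=\tau_{(1)}<\cdots<\tau_{(l)}<\tau_{(l+1)}=\infty$. Because $T_\hbar$ hits the Dirichlet portion of $\partial D_\hbar$, the value of $f_{\mathcal S,\hbar}$ at $Y(\tau_{(j'+1)}\wedge T_\hbar)$ vanishes whenever $T_\hbar\in[\tau_{(j')},\tau_{(j'+1)})$, and summands with $T_\hbar<\tau_{(j')}$ collapse to zero. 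Splitting by the random index $j^{*}:=\max\{j'\,:\,\tau_{(j')}\le T_\hbar\}$, I rearrange the sum in \eqref{KL1} as $f_{\mathcal S(\mathbf t;\tilde\tau),\hbar}(Y(\tilde\tau))$ plus a remainder whose $j'$-th term equals $f_{\mathcal S(\mathbf t;\tau_{(j'+1)}),\hbar}(Y(\tau_{(j'+1)}))-f_{\mathcal S(\mathbf t;\tau_{(j')}),\hbar}(Y(\tau_{(j'+1)}))$ for $j'=1,\ldots,j^{*}-1$. Each such term is nonpositive because $\mathcal S(\mathbf t;\tau_{(j'+1)})\ge\mathcal S(\mathbf t;\tau_{(j')})$ componentwise (more change-points have occurred), and Lemma \ref{pde lemma} then gives $f_{\mathcal S(\mathbf t;\tau_{(j'+1)}),\hbar}\le f_{\mathcal S(\mathbf t;\tau_{(j')}),\hbar}$.

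Next, since $\tau_j=\tilde\tau$, the $j$-th coordinate of $\mathcal S(\mathbf t;\tilde\tau)$ equals $+1$; the remaining coordinates are $-1$ in the generic case and $+1$ precisely when several $\tau_i$ coincide with $\tilde\tau$. In every case $\mathcal S(\mathbf t;\tilde\tau)\ge\mathcal S^{(j)}$ componentwise, so Lemma \ref{pde lemma} together with the monotonicity in each $y_i$ yields the pathwise chain $f_{\mathcal S(\mathbf t;\tilde\tau),\hbar}(Y(\tilde\tau))\le f_{\mathcal S^{(j)},\hbar}(Y(\tilde\tau))\le f_{\mathcal S^{(j)},\hbar}(0,\ldots,0)$. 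Substituting this bound into \eqref{KL1}, then taking the essential supremum against $\mathcal B_{\tilde\tau}$ and the supremum over $\mathbf t$, gives $J_j^{(N)}(T_\hbar)\le f_{\mathcal S^{(j)},\hbar}(0,\ldots,0)$.

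For the matching lower bound, specialize to $\tau_j=0$ and $\tau_i=\infty$ for every $i\ne j$, which is an admissible configuration for $J_j^{(N)}$. Then $\tilde\tau=0$, $l=1$, $Y(0)=(0,\ldots,0)$ deterministically, and $T_\hbar>0$ almost surely by continuity of $Y$. Formula \eqref{KL1} collapses to the single deterministic value $f_{\mathcal S^{(j)},\hbar}(0,\ldots,0)-f_{\mathcal S^{(j)},\hbar}(Y(T_\hbar))=f_{\mathcal S^{(j)},\hbar}(0,\ldots,0)$, using the Dirichlet boundary condition at $T_\hbar$; both the inner essential supremum and the outer supremum over $\mathbf t$ equal this value, supplying the reverse inequality.

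The main technical point I anticipate is the case analysis around the random index $j^{*}$: formula \eqref{KL1} is stated as a fixed sum over $j'=1,\ldots,l$, so one must verify that summands with $j'>j^{*}$ contribute nothing and that the $j'=j^{*}$ boundary term vanishes before the telescoping rearrangement above is legitimate. Once this bookkeeping is in place, everything else reduces to direct invocations of Lemma \ref{pde lemma}.
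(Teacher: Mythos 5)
Your argument is correct and follows the paper's strategy: combine the representation \eqref{KL1} from Theorem \ref{exp cusum} with the two monotonicity properties of Lemma \ref{pde lemma} to bound the delay above by $f_{\mathcal{S}^{(j)},\hbar}(0,\ldots,0)$, then realize this bound by the configuration $\tau_j=0,\,\tau_i=\infty\ (i\neq j)$. The paper only writes out the case $N=2$, $j=1$, splits the integral explicitly at $\tau_2$, replaces $f_{(1,1),\hbar}(Y(\tau_2))$ by $f_{(1,-1),\hbar}(Y(\tau_2))$ via Lemma \ref{pde lemma}, collapses the telescope to $E\{f_{(1,-1),\hbar}(Y(\tau_1))\}\le f_{(1,-1),\hbar}(0,0)$, and asserts the general case is analogous; you carry out that generalization directly by Abel-summing \eqref{KL1} around the random index $j^{*}$, which is the right bookkeeping and yields exactly the same chain of inequalities. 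One small remark: your step ``the $j$-th coordinate of $\mathcal{S}(\mathbf t;\tilde\tau)$ equals $+1$'' relies on $\varphi(\tau_j,\tilde\tau)=+1$ when $\tau_j=\tilde\tau$, which requires the convention $\varphi(\tau,s)=\idc{\tau\le s}-\idc{\tau>s}$ rather than the strict inequality written in the definition of $\varphi$; the paper itself uses the $\le$-convention in its proof of this proposition (it writes $f_{(1,-1),\hbar}$ for the segment starting at $\tau_{(1)}=\tau_1$), so this is an inconsistency in the paper's notation and not a gap in your argument.
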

  \begin{proof}First recall from \eqref{JjN} that, for all $j\in\{1,\ldots,N\}$,
    \begin{eqnarray}J_j^{(N)}(T_\hbar)&=&\mathop{\sup_{\mathbf{t}=(\tau_1,\ldots,\tau_N)\in(\ol{\mathbb{R}}_+)^N}}_{\tau_j=\tilde{\tau}<\infty} ~\textup{essup}~E_{\mathbf{t}}\bigg\{\idc{T_\hbar\ge\tau_j}\int_{\tau_j}^{T_\hbar}\frac{1}{2}(\alpha_1(s;\omega))^2ds|\mathcal{B}_{\tau_j}\bigg\}.\nn\end{eqnarray}
    Without loss of generality, we give the proof for the case $N=2$. The argument can be easily generalized to treat a general $N$. Notice that for $N=2$, we have for $j=1,2$ that,
    \begin{eqnarray} J_j^{(2)}(T_\hbar)=\mathop{\sup_{\mathbf{t}=(\tau_1,\tau_2)\in(\ol{\mathbb{R}}_+)^2}}_{\tau_j=\tilde{\tau}<\infty}\textup{essup}\,E_{\mathbf{t}}\bigg\{\idc{T_\hbar\ge\tau_j}\int_{\tau_j}^{T_\hbar}\frac{1}{2}(\alpha_1(s;\omega))^2ds|\mathcal{B}_{\tau_j}\bigg\}\label{crazyeq}.\end{eqnarray}
We assume without loss of generality that  $\tau_1=\tilde{\tau}$. To prove the claim in the proposition, we first show that, for any fixed $\mathbf{t}=(\tau_1,\tau_2)$ such that $\tau_1\in\mathbb{R}_+$ and $\tau_1=\tilde{\tau}$, we have that
    \begin{eqnarray}E_{\mathbf{t}}\bigg\{\idc{T_\hbar\ge\tau_1}\int_{\tau_1}^{T_\hbar}\frac{1}{2}(\alpha_1(s;\omega))^2ds|\mathcal{B}_{\tau_1}\bigg\}\le f_{(1,-1),\hbar}(0,0),\,\,P_{\mathbf{t}}\text{-a.s.}\label{CCC}\end{eqnarray}
    To show this, we use Theorem \ref{exp cusum} to obtain
    \begin{align*}&E_{\mathbf{t}}\bigg\{\int_{\tau_1}^{T_\hbar}\frac{1}{2}(\alpha_1(s;\omega))^2ds|\mathcal{B}_{\tau_1}\bigg\}\idc{T_\hbar\ge\tau_1}\\
    =&E_{\mathbf{t}}\bigg\{\int_{\tau_1}^{T_\hbar\wedge\tau_2}\frac{1}{2}(\alpha_1(s;\omega))^2ds+\int_{T_\hbar\wedge\tau_2}^{T_\hbar}\frac{1}{2}(\alpha_1(s;\omega))^2ds|\mathcal{B}_{\tau_1}\bigg\}\idc{T_\hbar\ge\tau_1}\\
      =&\idc{T_\hbar\ge\tau_1}\left(E_{(\tau_1,\infty)}\{\idc{T_\hbar\ge\tau_2}[f_{(1,-1),\hbar}(Y(\tau_1))-f_{(1,-1),\hbar}(Y(\tau_2))]|\mathcal{B}_{\tau_1}\}\right.\\
      &+\left.E_{(\tau_1,\infty)}\{\idc{T_\hbar\ge\tau_2}f_{(1,1),\hbar}(Y(\tau_2))|\mathcal{B}_{\tau_1}\}+E_{(\tau_1,\infty)}\{\idc{T_\hbar<\tau_2}f_{(1,-1),\hbar}(Y(\tau_1))|\mathcal{B}_{\tau_1}\}\right).
    \end{align*}
     Using Lemma \ref{pde lemma}, we have that
    \[f_{(1,1),\hbar}(Y(\tau_2))\idc{T_\hbar\ge\tau_2}\le f_{(1,-1),\hbar}(Y(\tau_2))\idc{T_\hbar\ge\tau_2}\le f_{(1,-1),\hbar}(0,0)\idc{T_\hbar\ge\tau_2},~P_{(\tau_1,\infty)}\text{-a.s.}\]
    from which we obtain \eqref{CCC}.
    On the other hand, apply Theorem \ref{exp cusum} to $(\tau_1,\tau_2)=(0,\infty)$, we have
        \begin{align*}E_{0,\infty}\bigg\{\frac{1}{2}\int_{0}^{T_\hbar}(\alpha_1(s;\omega))^2ds\bigg\}=f_{(1,-1),\hbar}(0,0).
\end{align*}
This suggests that (from \eqref{crazyeq}),
\[J_1^{(2)}(T_\hbar)=f_{(1,-1),\hbar}(0,0).\]
Therefore,  \eqref{worstDelay} holds for $j=1$ and and $N=2$. \qquad
\end{proof}

We now establish a upper bound for $\inf_{T\in\mathbb{B}_\gamma}J_{KL}^{(N)}(T)$ using the $N$-CUSUM rule.  First, let us define 
\[H(\gamma)=\{\hbar\in (\mathbb{R}_+)^N\,:\, f_{\mathcal{S}^{(0)},\hbar}(0,\ldots,0)\ge(c_N)^2\gamma\}.\]
The asymptotic analysis in the next section will show that $H(\gamma)\not=\emptyset$. From \eqref{KL2} in Theorem \ref{exp cusum}, we know that for any $\hbar\in H(\gamma)$,
\begin{eqnarray*}E_{\mathbf{t}_\infty}\bigg\{\int_0^{T_\hbar}\frac{1}{2}(c_N^{-1}\alpha_1(s;\omega))^2ds\bigg\}
&=&\frac{1}{(c_N)^2}f_{\mathcal{S}^{(0)},\hbar}(0,\ldots,0)\,\ge\,\gamma.
\end{eqnarray*}
Hence, $T_\hbar\in\mathbb{B}_\gamma$ for any $\hbar\in H(\gamma)$, and we trivially have
\begin{eqnarray}\label{UBJ}
J_{KL}^{(N)}(T_\hbar) & \ge &\inf_{T\in\mathbb{B}_\gamma}J_{KL}^{(N)}(T),\,\,\,\forall\hbar\in H(\gamma).
\end{eqnarray}
From \eqref{EQR}  and Proposition \ref{NCUSUMEQ}, we will look for a vector $\hbar\in H(\gamma)$ for the multi-chart CUSUM stopping rule (18) so that 
\begin{eqnarray}
\label{equalizerexpectation}
f_{\mathcal{S}^{(1)},\hbar}(0,\ldots,0)=f_{\mathcal{S}^{(j)},\hbar}(0,\ldots,0)+o(1),\,\,\,\forall j\in\{1,2,\ldots,N\},
\end{eqnarray}
as $\gamma\to\infty$.

While it is not clear whether or not there is a  $\hbar\in H(\gamma)$ such that \eqref{equalizerexpectation} holds and $f_{\mathcal{S}^{(0)},\hbar}(0,\ldots,0)=(c_N)^2\gamma$, we will give an explicit linear constraint on all coordinates of $\hbar$, namely, \eqref{thres}, under which \eqref{equalizerexpectation} holds asymptotically as $\hbar\to\infty$. The stopping rule chosen in this way gives us an easily computable upper bound for $\inf_{T\in\mathbb{B}_\gamma}J_{KL}^{(N)}(T)$. We will also prove in later sections that this stopping rule is asymptotically optimal as $\gamma\to\infty$.

\section{Asymptotic analysis}\label{Asymptotic analysis}

In this section we will analyze the asymptotic behavior of $f_{\mathcal{S}^{(j)},\hbar}(0,\ldots,0)$, as all $h_i$'s tend to $\infty$.
As a byproduct, we show that the set $H(\gamma)\neq\emptyset$. The key analysis is based on the following integral representation of $f_{\mathcal{S}^{(j)},\hbar}$.
 
 \begin{lemma}\label{integral lemma}
For any $\hbar\in(0,\infty)^N$, and any $y_i\in[0,h_i]$ for all $i\in\{1,\ldots,N\}$,
we have
\begin{eqnarray}
\nn f_{\mathcal{S},\hbar}(y_1,\ldots,y_N) & = & \int_0^{\infty}\prod_{i=1}^NK_{\mathcal{S}_i,\epsilon_i}\bigg(\frac{\epsilon_it}{c_i^2},\frac{y_i}{h_i}\bigg)\,dt,
\end{eqnarray}
where $\epsilon_i:=\frac{1}{h_i}$ and $K_{\mathcal{S}_i,\epsilon_i}(t,z)$ is the  $C^2$ function on $[0,1]$ satisfying the following PDE: for any $z\in [0,1)$,
\begin{subnumcases}{}
\label{PDE_Gi}
\frac{\partial K_{\mathcal{S}_i,\epsilon_i}}{\partial t} =
\epsilon_i \frac{\partial^2K_{\mathcal{S}_i,\epsilon_i}}{\partial z^2}+\mathcal{S}_i \frac{\partial K_{\mathcal{S}_i,\epsilon_i}}{\partial z},\\
\label{PDE_GiCond}
K_{\mathcal{S}_i,\epsilon_i}|_{t=0}=\frac{\partial K_{\mathcal{S}_i,\epsilon_i}}{\partial z}\bigg|_{z=0}=K_{\mathcal{S}_i,\epsilon_i}|_{z=1}=0.
\end{subnumcases}
Moreover, for any $c,\epsilon> 0$, $K_{\pm1,\epsilon}(t,0)$ is a decreasing function, bounded between 0 and 1, and satisfies  
\begin{equation}
\label{G integrals}
\int_0^\infty K_{+1,\epsilon}(c^{-2}\epsilon t,0)dt=c^2\bigg(\frac{1}{\epsilon}-1+\textrm{e}^{-1/\epsilon}\bigg).
\end{equation}
\end{lemma}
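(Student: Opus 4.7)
The plan is to establish the integral representation probabilistically via the Feynman--Kac interpretation of $f_{\mathcal{S},\hbar}$ as an expected exit time of an auxiliary diffusion whose coordinates are \emph{independent}, and then exploit this independence.

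First I would set up, on an auxiliary probability space, $N$ independent one-dimensional diffusions
\[
dX_i(s)=\frac{\mathcal{S}_i}{c_i^{2}}\,ds+\frac{\sqrt{2}}{c_i}\,d\beta_i(s),\qquad X_i(0)=y_i,
\]
each instantaneously reflected at $0$ and absorbed at $h_i$, with absorption time $\tau_i$. The infinitesimal generator of $(X_1,\ldots,X_N)$ on $D_\hbar$ coincides with the operator on the left-hand side of \eqref{pde1}, and the boundary conditions for the $X_i$ match \eqref{cond1}. Standard Feynman--Kac theory (for smooth, bounded domains with Neumann/Dirichlet data) then identifies $f_{\mathcal{S},\hbar}(y_1,\ldots,y_N)$ with $\mathbb{E}_y[\tau]$, where $\tau=\min_i\tau_i$ is the first time any coordinate hits its absorbing boundary. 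Because the coordinates are independent, the events $\{\tau_i>s\}$ are mutually independent, so
\[
f_{\mathcal{S},\hbar}(y_1,\ldots,y_N)=\int_0^\infty \mathbb{P}_y(\tau>s)\,ds=\int_0^\infty \prod_{i=1}^N \mathbb{P}_{y_i}(\tau_i>s)\,ds.
\]

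Next I would identify each factor with the claimed rescaled PDE solution. The survival probability $\bar K_i(s,y_i):=\mathbb{P}_{y_i}(\tau_i>s)$ solves the backward Kolmogorov equation $\partial_s \bar K_i=c_i^{-2}\partial_{y_i}^2 \bar K_i+\mathcal{S}_i c_i^{-2}\partial_{y_i}\bar K_i$ on $[0,h_i]$, with initial datum $\bar K_i(0,\cdot)=1$ inside the strip and the Neumann/Dirichlet conditions at $y_i=0$ and $y_i=h_i$ respectively. Setting $z=y_i/h_i$ and $\sigma=\epsilon_i s/c_i^{2}$ with $\epsilon_i=1/h_i$ transforms this PDE into exactly \eqref{PDE_Gi} on $[0,1]$ with the boundary data \eqref{PDE_GiCond}. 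Hence $\bar K_i(s,y_i)=K_{\mathcal{S}_i,\epsilon_i}\!\left(\epsilon_i s/c_i^{2},\,y_i/h_i\right)$, and substituting yields the representation. The monotonicity and $[0,1]$-bound for $K_{\pm 1,\epsilon}(t,0)$ are immediate from the probabilistic reading: it is the survival probability of a reflected--absorbed diffusion started at $0$, hence non-increasing in $t$ and valued in $[0,1]$.

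Finally, for \eqref{G integrals}, a change of variable $\sigma=c^{-2}\epsilon\,t$ gives
\[
\int_0^\infty K_{+1,\epsilon}(c^{-2}\epsilon\,t,0)\,dt=\frac{c^{2}}{\epsilon}\int_0^\infty K_{+1,\epsilon}(\sigma,0)\,d\sigma=\frac{c^{2}}{\epsilon}\,\mathbb{E}_0[\tau],
\]
where $\tau$ is the exit time from $[0,1]$ of $d\xi=d\sigma+\sqrt{2\epsilon}\,d\beta_\sigma$ (reflected at $0$, absorbed at $1$). The function $\psi(z):=\mathbb{E}_z[\tau]$ solves the elementary boundary-value problem $\epsilon\psi''+\psi'=-1$ with $\psi'(0)=0$ and $\psi(1)=0$; solving this ODE gives $\psi(z)=1-z+\epsilon(e^{-1/\epsilon}-e^{-z/\epsilon})$, in particular $\psi(0)=1-\epsilon+\epsilon e^{-1/\epsilon}$, which multiplied by $c^{2}/\epsilon$ produces $c^{2}(1/\epsilon-1+e^{-1/\epsilon})$.

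The main obstacle is Step 1: justifying the Feynman--Kac representation rigorously on the non-smooth domain $D_\hbar$ with mixed Neumann/Dirichlet data. Finiteness of $\mathbb{E}_y[\tau]$ follows by domination by $\mathbb{E}_{y_1}[\tau_1]<\infty$ (from the explicit ODE above), so It\^o's formula applied to $f_{\mathcal{S},\hbar}$ (which is $C^2$ up to the boundary by assumption) on stopped trajectories, combined with the Neumann condition killing the reflection local-time terms as in the proof of Theorem~\ref{exp cusum}, yields $f_{\mathcal{S},\hbar}(y)=\mathbb{E}_y[\tau]$ upon taking limits.
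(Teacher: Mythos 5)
Your proposal is correct and follows essentially the same route as the paper: it invokes the Feynman--Kac identification of $f_{\mathcal{S},\hbar}$ with the expected minimum exit time of $N$ independent reflected diffusions (already used for Lemma \ref{pde lemma}), writes this expectation as $\int_0^\infty \mathbb{P}(\tau>t)\,dt$, factors the survival probability by independence, and matches each factor to the rescaled solution of \eqref{PDE_Gi}--\eqref{PDE_GiCond} via the backward Kolmogorov equation and PDE uniqueness. The only cosmetic difference is that you derive \eqref{G integrals} by solving the one-dimensional boundary-value problem explicitly, whereas the paper quotes the known formula $E\{T^{X^{+1}}_{h_1}\}=g(-h_1)$; the two computations coincide.
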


To examine the asymptotic behavior of $K_{\mathcal{S}_i,\epsilon_i}(c_i^{-2}\epsilon_it,0)$ as $h_i=\frac{1}{\epsilon_i}\to\infty$ or $\epsilon_i\downarrow0$, 
we transform \eqref{PDE_Gi} to a
heat equation and then obtain an expansion of $K_{\mathcal{S}_i,\epsilon_i}(t,0)$ using the eigenfunctions of the
associated Sturm-Liouville problem. Details of this expansion are given
in the appendix. Using the asymptotic expansion, we obtain an estimate for $f_{\mathcal{S}^{(0)},\hbar}(0,\ldots,0)$ as the $h_i$'s tend to $\infty$.

\begin{proposition}\label{falsealarmasym}
As $h_i=\frac{1}{\epsilon_i}\to\infty$, we have
\begin{eqnarray}\label{fleadingterm}
\bigg|f_{\mathcal{S}^{(0)},\hbar}(0,\ldots,0)-\frac{1}{\sum_{i=1}^Nc_i^{-2}\textup{e}^{-h_i}}\bigg|&\le&\mathcal{O}(1/\epsilon_{\max}),
\end{eqnarray}
where $\epsilon_{\max}=\max_i\{\epsilon_i\}$.
\end{proposition}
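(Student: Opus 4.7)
The proof plan starts from the integral representation in Lemma~\ref{integral lemma}. Since $\mathcal{S}^{(0)}=(-1,\ldots,-1)$ and we evaluate at the origin,
\begin{equation*}
  f_{\mathcal{S}^{(0)},\hbar}(0,\ldots,0)\;=\;\int_0^\infty\prod_{i=1}^N K_{-1,\epsilon_i}\!\left(\tfrac{\epsilon_i t}{c_i^2},0\right)dt,
\end{equation*}
so the proposition reduces to sharp asymptotics of the one-dimensional quantities $K_{-1,\epsilon_i}(\cdot,0)$, combined with careful bookkeeping of the product integral as all $\epsilon_i\downarrow 0$.

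The first step is the spectral analysis announced in the text preceding the proposition. For each $i$, the substitution $K_{-1,\epsilon_i}(s,z)=e^{z/(2\epsilon_i)}u_i(s,z)$ turns \eqref{PDE_Gi}--\eqref{PDE_GiCond} into the symmetric heat problem $\partial_s u_i=\epsilon_i\partial_{zz}u_i-u_i/(4\epsilon_i)$ on $[0,1]$, with a Robin condition at $z=0$, Dirichlet at $z=1$, and initial datum $u_i(0,z)=e^{-z/(2\epsilon_i)}$. The associated Sturm--Liouville operator has discrete spectrum $0<\lambda_1^{(i)}<\lambda_2^{(i)}\le\cdots$. Splitting into real and oscillatory branches, the principal eigenvalue is determined by the transcendental equation $\tanh\kappa=2\epsilon_i\kappa$; its largest root $\kappa_1=1/(2\epsilon_i)-h_ie^{-h_i}+\mathcal{O}(h_i^2e^{-2h_i})$ yields
\begin{equation*}
  \lambda_1^{(i)}\epsilon_i\;=\;e^{-h_i}\bigl(1+\mathcal{O}(h_ie^{-h_i})\bigr).
\end{equation*}
All remaining eigenvalues come from the oscillatory branch (equation $\tan(\omega/(2\epsilon_i))=\omega$) and satisfy $\lambda_n^{(i)}\ge c/\epsilon_i$ uniformly, giving a large spectral gap. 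Isolating the principal mode in the eigenfunction expansion yields
\begin{equation*}
  K_{-1,\epsilon_i}(s,0)\;=\;A_i\,e^{-\lambda_1^{(i)}s}+R_i(s),\quad A_i=1+\mathcal{O}(e^{-h_i}),\quad|R_i(s)|\le Ce^{-cs/\epsilon_i}.
\end{equation*}

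Plugging this decomposition into the product and distributing gives one leading term whose $t$-integral equals $\prod_i A_i\,\big/\,\sum_i c_i^{-2}\lambda_1^{(i)}\epsilon_i$, plus cross terms each containing at least one factor $R_i(\epsilon_it/c_i^2)$. Set $A=\sum_ic_i^{-2}e^{-h_i}$: the eigenvalue asymptotics give $\sum_ic_i^{-2}\lambda_1^{(i)}\epsilon_i=A(1+\eta)$ with $|\eta|=\mathcal{O}(h_{\min}e^{-h_{\min}})$, while the uniform lower bound $A\ge c_N^{-2}e^{-h_{\min}}$ yields $|1/(A(1+\eta))-1/A|=\mathcal{O}(\eta/A)=\mathcal{O}(h_{\min})=\mathcal{O}(1/\epsilon_{\max})$. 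Similarly $\prod_iA_i=1+\mathcal{O}(e^{-h_{\min}})$ contributes an $\mathcal{O}(1)$ correction. Each cross term is controlled via $|R_i(\epsilon_it/c_i^2)|\le Ce^{-ct/c_i^2}$ together with $K_{-1,\epsilon_j}\le 1$, so its $t$-integral is $\mathcal{O}(1)$, and the finitely many cross terms contribute $\mathcal{O}(1)$ in total. Combining these bounds delivers \eqref{fleadingterm}.

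The main obstacle is the refined second-order asymptotics of $\lambda_1^{(i)}\epsilon_i$ and $A_i$: the crude estimates $\lambda_1^{(i)}\epsilon_i\sim e^{-h_i}$ and $A_i\to 1$ alone would only yield an error that is exponential in the spread of the $h_i$'s after inversion, whereas the claim demands the sharper $\mathcal{O}(1/\epsilon_{\max})$, which is exponentially smaller than the leading term itself. Extracting these refined asymptotics from the transcendental eigenvalue equation and propagating them through the inversion $1/(A(1+\eta))$ is the technical heart of the argument; the remaining steps---uniform decay of higher modes, smallness of cross terms, and routine algebra---are standard.
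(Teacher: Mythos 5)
Your proposal follows the paper's proof essentially step for step: the integral representation from Lemma~\ref{integral lemma}, the Sturm--Liouville spectral decomposition (Theorem~\ref{ts_theorem_minus}) that isolates the principal eigenvalue governed by $\tanh\omega=2\epsilon\omega$ with a Newton--Raphson refinement (Lemma~\ref{restimate}), and propagation of the second-order asymptotics through the inversion $1/(A(1+\eta))$, which you correctly identify as the technical heart because a crude $\lambda_1^{(i)}\epsilon_i\sim e^{-h_i}$ alone would blow up after dividing by $A$. One minor inaccuracy: the principal-mode amplitude satisfies $A_i=1+\mathcal{O}(h_ie^{-h_i})$, not $\mathcal{O}(e^{-h_i})$ as you write (the paper's estimate is $|A^0_{\epsilon_i}-1|\le\tilde{C}_i\epsilon_i^{-1}e^{-1/\epsilon_i}$), which enlarges the $\prod_iA_i$ contribution after inversion from your claimed $\mathcal{O}(1)$ to $\mathcal{O}(1/\epsilon_{\max})$, but that is still absorbed by the target bound, so the argument stands.
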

Clearly, the estimate in \eqref{fleadingterm} tends to $\infty$ as the $h_i$'s tend to $\infty$, hence we have $H(\gamma)\neq\emptyset$.

Similarly, using Lemma \ref{integral lemma} for $\mathcal{S}=\mathcal{S}^{(j)}$, $j=1,\ldots,N$, we have
\begin{eqnarray}
\nn
f_{\mathcal{S}^{(j)},\hbar}(0,\ldots, 0) & = & \int_0^\infty K_{+1,\epsilon_j}(c_j^{-2}\epsilon_jt,0) \cdot \bigg(\prod_{i\neq j} K_{-1,\epsilon_i}(c_i^{-2}\epsilon_it,0)\bigg)dt\,.
\end{eqnarray}
In the limit for $\epsilon_j\rightarrow 0,\epsilon_i\rightarrow 0$,
the value of the integral is governed by the integral of the function $K_{+1,\epsilon_j}(c_j^{-2}\epsilon_jt,0)$ 
in
the sense that
\begin{equation}\nn
f_{\mathcal{S}^{(j)},\hbar}(0,\ldots, 0) = \int_0^\infty K_{+1,\epsilon_j}(c_j^{-2}\epsilon_jt,0)  dt+o(1)\,=c_j^2(h_j-1)+o(1).
\end{equation} 
More precisely, we have the following result.
\begin{proposition}\label{prop:bound_delay}
As $h_i=\frac{1}{\epsilon_i}\to\infty$, we have
\begin{eqnarray}\label{giapprox}
|f_{\mathcal{S}^{(j)},\hbar}(0,\ldots,0)-c_j^{2}(h_j-1)|\le \mathcal{O}\bigg(\frac{1}{\epsilon_j^2}\sum_{i=1}^N\mathrm{e}^{-1/\epsilon_i}\bigg)\,.
\end{eqnarray}
\end{proposition}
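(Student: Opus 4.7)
The plan starts from Lemma~\ref{integral lemma} applied with $\mathcal{S}=\mathcal{S}^{(j)}$, which gives
$$f_{\mathcal{S}^{(j)},\hbar}(0,\ldots,0) \;=\; \int_0^\infty K_{+1,\epsilon_j}(c_j^{-2}\epsilon_j t,0) \prod_{i\neq j} K_{-1,\epsilon_i}(c_i^{-2}\epsilon_i t,0)\,dt,$$
combined with the exact identity \eqref{G integrals}, $\int_0^\infty K_{+1,\epsilon_j}(c_j^{-2}\epsilon_j t,0)\,dt = c_j^2(h_j-1+e^{-h_j})$. Subtracting one from the other yields
$$f_{\mathcal{S}^{(j)},\hbar}(0,\ldots,0)-c_j^2(h_j-1) \;=\; c_j^2 e^{-h_j} - \int_0^\infty K_{+1,\epsilon_j}(c_j^{-2}\epsilon_j t,0)\Bigl[1-\prod_{i\neq j}K_{-1,\epsilon_i}(c_i^{-2}\epsilon_i t,0)\Bigr]dt.$$
The first term $c_j^2 e^{-h_j}=c_j^2 e^{-1/\epsilon_j}$ is already of order $\mathcal{O}(\epsilon_j^{-2}\,e^{-1/\epsilon_j})$ and is absorbed in the target bound, so it remains to control the integral remainder.

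Since each factor $K_{-1,\epsilon_i}(\cdot,0)$ lies in $[0,1]$ by Lemma~\ref{integral lemma}, the Weierstrass product inequality gives $0\le 1-\prod_{i\neq j}K_{-1,\epsilon_i}(c_i^{-2}\epsilon_i t,0)\le \sum_{i\neq j}\bigl(1-K_{-1,\epsilon_i}(c_i^{-2}\epsilon_i t,0)\bigr)$. The proof then reduces to showing, for each $i\neq j$, that
$$I_{ij} \;:=\; \int_0^\infty K_{+1,\epsilon_j}(c_j^{-2}\epsilon_j t,0)\bigl(1-K_{-1,\epsilon_i}(c_i^{-2}\epsilon_i t,0)\bigr)\,dt \;=\; \mathcal{O}\bigl(\epsilon_j^{-2}\,e^{-1/\epsilon_i}\bigr).$$
I would establish $I_{ij}$ through two auxiliary estimates. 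The first is a first-moment identity: multiplying the PDE~\eqref{PDE_Gi} for $\mathcal{S}=+1$ by $t$, integrating, and solving the resulting elementary ODE exactly as in the derivation of \eqref{G integrals}, one obtains $\int_0^\infty t\,K_{+1,\epsilon_j}(c_j^{-2}\epsilon_j t,0)\,dt = \mathcal{O}(\epsilon_j^{-2})$. The second is a smallness estimate on $\tilde K(\tau,z):=1-K_{-1,\epsilon_i}(\tau,z)$, which itself solves the same drift--diffusion PDE but with zero initial data and unit Dirichlet value at $z=1$; by the Sturm--Liouville/Kramers small-noise analysis supplied in the appendix, its principal decay eigenvalue satisfies $\lambda_1=\mathcal{O}(e^{-1/\epsilon_i})$, and retaining the slowest mode in the eigenfunction expansion yields the linear envelope $1-K_{-1,\epsilon_i}(\tau,0)\le C\tau\,e^{-1/\epsilon_i}$ over the $\tau$-range contributing to $I_{ij}$. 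Substituting $\tau=c_i^{-2}\epsilon_i t$ into this envelope and combining with the first-moment bound gives $I_{ij}\le C\,c_i^{-2}\epsilon_i\,e^{-1/\epsilon_i}\!\int_0^\infty t\,K_{+1,\epsilon_j}(c_j^{-2}\epsilon_j t,0)\,dt=\mathcal{O}(\epsilon_j^{-2}\,e^{-1/\epsilon_i})$, concluding the proof.

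The main obstacle is the linear envelope $1-K_{-1,\epsilon_i}(\tau,0)\le C\tau\,e^{-1/\epsilon_i}$: the trivial pointwise bound $1-K\le 1$ makes $I_{ij}$ divergent, while $K_{+1,\epsilon_j}(c_j^{-2}\epsilon_j t,0)$ has effective support on the polynomial scale $c_j^2/\epsilon_j$, so a genuine small-noise Kramers-type estimate for escape against drift in the presence of a reflecting barrier at $z=0$ and absorption at $z=1$ is indispensable. I would derive it from the explicit eigenfunction expansion of $K_{-1,\epsilon_i}(\tau,0)$ set up in the appendix, exploiting monotonicity of $\tilde K$ in $\tau$ to convert the exponentially small eigenvalue $\lambda_1$ into the linear-in-$\tau$ envelope required for the integrability against $K_{+1,\epsilon_j}$.
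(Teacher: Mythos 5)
Your decomposition is sound and genuinely different from the paper's. You reduce to the pairwise cross-terms $I_{ij}$ via the Weierstrass inequality $1-\prod_{i\neq j}K_{-1,\epsilon_i}\le\sum_{i\neq j}(1-K_{-1,\epsilon_i})$ and then pair a second-moment bound on $K_{+1,\epsilon_j}$ with a pointwise linear envelope on $1-K_{-1,\epsilon_i}$. The paper instead keeps the product intact, splits the $t$-integral at the deterministic cutoff $t^{*}=6c_j^2/\epsilon_j$, disposes of the far tail via Corollary on $\int_6^\infty K_{+1,\epsilon}(t,0)\,dt$, and handles the near part by combining $K_{+1,\epsilon_j}\le 1$ with the monotonicity of $K_{-1,\epsilon_i}(\cdot,0)$, so that only the \emph{single value} $1-\prod_{i\ne j}K_{-1,\epsilon_i}\bigl(6c_i^{-2}\epsilon_i/(c_j^{-2}\epsilon_j),0\bigr)$ has to be estimated from the eigenfunction expansion. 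Your route would avoid the cutoff entirely, but at the cost of two auxiliary facts the paper never needs: the first-moment identity $\int_0^\infty t\,K_{+1,\epsilon_j}(c_j^{-2}\epsilon_j t,0)\,dt=\mathcal{O}(\epsilon_j^{-2})$ (plausible, equal to $\tfrac12 E[T^2]$ for the drifted reflected BM, but left as a sketch) and the uniform envelope.

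The envelope is where the real gap lies. As stated, $1-K_{-1,\epsilon_i}(\tau,0)\le C\tau\,\mathrm{e}^{-1/\epsilon_i}$ has the wrong $\epsilon_i$-scaling: the Kramers rate produced by the slow eigenvalue is $\lambda_1\approx \tfrac{1}{\epsilon_i}\mathrm{e}^{-1/\epsilon_i}$ (cf.\ the paper's equation giving $\epsilon_i(\omega^{(i)})^2-\tfrac{1}{4\epsilon_i}$), so the linear-in-$\tau$ bound must carry an extra $1/\epsilon_i$; this happens to cancel after the substitution $\tau=c_i^{-2}\epsilon_i t$, so the final order is unchanged, but the statement needs correcting. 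More seriously, ``retaining the slowest mode'' does not give the envelope by itself: the transient remainder $H_{\epsilon_i}(\tau)=\sum_{n\ge1}K^n_{-1,\epsilon_i}(\tau)$ is only controlled by Corollary~\ref{cor:Hbound} at the scale $\mathrm{e}^{-1/(2\epsilon_i)}$ (times a $\tau$-decaying sum), which for $\tau\lesssim 1$ swamps the target $\mathrm{e}^{-1/\epsilon_i}$. To push through, you must invoke the monotonicity of $K_{-1,\epsilon_i}(\cdot,0)$ (which the paper records in Lemma~\ref{integral lemma}): for $\tau\le 2$ bound $1-K_{-1,\epsilon_i}(\tau,0)\le 1-K_{-1,\epsilon_i}(2,0)$, where the transient is $\mathcal{O}(\mathrm{e}^{-1/\epsilon_i})$, and only for $\tau\ge 2$ use the mode expansion directly. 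You gesture at this in your last sentence, but as written the derivation skips the place where $H_{\epsilon_i}$ is not yet exponentially small at the $\mathrm{e}^{-1/\epsilon_i}$ scale. Making that split explicit --- or, more economically, adopting the paper's single-cutoff strategy --- closes the argument.
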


\section{Asymptotic optimality of the $N$-CUSUM rule}

In this section we prove the asymptotic optimality of the $N$-CUSUM rule as $\gamma\to\infty$. This is accomplished by establishing a lower bound for 
$\inf_{T\in\mathbb{B}_\gamma}J_{KL}^{(N)}(T)$, and then showing that the difference between this lower bound and the upper bound  in \eqref{UBJ}  is bounded as $\gamma\to\infty$.

\begin{proposition}
\label{lemmaLB}
For any stopping rule $T\in\mathbb{B}_\gamma$, we have $J_{KL}^{(N)}(T) \ge g(-\nu^\star)$, where $\nu^\star$ is the unique positive solution to $g(\nu^\star)=(c_N)^2\gamma$ and the function $g$ is as defined in equation \eqref{g}. 
\end{proposition}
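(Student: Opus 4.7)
The plan is to reduce the multi-sensor problem to a one-dimensional change-detection problem by pinning down a specific change-point configuration and then to invoke the one-dimensional CUSUM optimality theorem of \cite{Mous04} (Theorem 1) recalled at the end of \S2. Concretely, I fix $\mathbf{t}^{\star}=(0,\infty,\ldots,\infty)$, so that $\tilde\tau=0$ and no change occurs at sensors $2,\ldots,N$. Since $\mathcal{B}_0$ is trivial, the essential supremum in \eqref{JKL} collapses to an ordinary expectation at $\mathbf{t}^{\star}$, and since the integrand $\int_0^T(\alpha_1(s;\omega))^2\,ds$ vanishes on $\{T=0\}$ the indicator $\idc{T>\tilde\tau}$ is redundant, so
\[
J_{KL}^{(N)}(T)\;\ge\;E_{\mathbf{t}^{\star}}\bigg\{\int_{0}^{T}\tfrac12(\alpha_1(s;\omega))^2\,ds\bigg\}.
\]

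Next I rewrite the false-alarm budget $T\in\mathbb{B}_\gamma$ in the scale of $\alpha_1$: the defining inequality is equivalent to $E_{\mathbf{t}_\infty}\{\int_0^T\tfrac12(\alpha_1(s;\omega))^2\,ds\}\ge c_N^{2}\gamma$. With this rescaled budget, the pair $(P_{\mathbf{t}_\infty},P_{\mathbf{t}^{\star}})$ is precisely the null/alternative pair of a classical one-dimensional change-detection problem, whose Radon--Nikodym density is the scalar exponential $\exp(u_1(t))$ of \eqref{RNDeri}. Theorem~1 of \cite{Mous04} then says that any admissible stopping rule must have expected signal energy under the change measure at least $g(-\nu^{\star})$, where $\nu^{\star}$ is the unique positive root of $g(\nu^{\star})=c_N^{2}\gamma$. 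Combining with the display above yields the claimed bound $J_{KL}^{(N)}(T)\ge g(-\nu^{\star})$.

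The one subtle point is that $T$ is an $\mathbb{B}$-stopping time rather than a stopping time in the sensor-1 filtration $\mathbb{G}^{(1)}$ used in \cite{Mous04}. Under both $P_{\mathbf{t}_\infty}$ and $P_{\mathbf{t}^{\star}}$, however, the coordinates $\omega_2,\ldots,\omega_N$ are standard Brownian motions with identical laws, so they carry no information that distinguishes the null from the alternative at sensor $1$; sensor $1$ is a sufficient statistic for the restricted detection problem. The CUSUM-optimality argument of \cite{Mous04} relies only on the exponential-martingale structure of $\exp(u_1(t))$ and on the predictability of $\alpha_1$ with respect to the ambient filtration, both of which survive verbatim when $\mathbb{G}^{(1)}$ is enlarged to $\mathbb{B}$. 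Making this enlarged-filtration extension of Moustakides' theorem precise is the main technical obstacle; it can be handled either by a standard sufficient-statistic/averaging argument or by rechecking the proof of Theorem~1 of \cite{Mous04} step by step in the larger filtration, where all of the It\^o-calculus identities continue to hold.
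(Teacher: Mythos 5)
Your reduction to the single-sensor configuration $\mathbf{t}^{\star}=(0,\infty,\ldots,\infty)$ and your rescaling of the false-alarm budget are both fine, but the key step — invoking Moustakides' theorem to conclude $E_{\mathbf{t}^{\star}}\{\int_0^T \tfrac12(\alpha_1)^2 ds\}\ge g(-\nu^\star)$ — is incorrect. The optimality theorem for the one-dimensional CUSUM (Theorem~2 of \cite{Mous04}, not Theorem~1, which concerns finiteness) bounds the \emph{worst-case} delay $\sup_{\tau<\infty}\text{essup}\,E_\tau\{\cdot\,|\,\mathcal{G}_\tau^{(1)}\}$, not the unconditional zero-change-time expectation $E_0\{\cdot\}$ alone. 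Fixing $\tau=0$ throws away exactly the supremum that makes Moustakides' bound hold. Indeed the claimed inequality is false as stated: take an SPRT-type rule $T^{*}=\inf\{t:u_1(t)\notin(-a,b)\}$ with $b$ fixed and $a\to\infty$. In the energy time scale $u_1$ is Brownian motion with drift $-1$ under $P_{\mathbf{t}_\infty}$ and drift $+1$ under $P_{\mathbf{t}^\star}$, so $E_{\mathbf{t}_\infty}\{\int_0^{T^*}\tfrac12 \alpha_1^2\,ds\}\sim a(1-e^{-1})\to\infty$ (so $T^*\in\mathbb{B}_\gamma$ for $\gamma$ of that order) while $E_{\mathbf{t}^\star}\{\int_0^{T^*}\tfrac12 \alpha_1^2\,ds\}\to b-(1-e^{-b})<\infty$, which is eventually far below $g(-\nu^\star)\sim\log\gamma-1$. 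The worst-case delay for $T^*$ is attained when the change occurs while $u_1$ is near its running minimum — not at $\tau=0$ — and that is where the supremum over $\tau$ is doing real work.

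The paper's actual proof keeps the supremum: it passes to the auxiliary index $J_{KL}^{(N,1)}(T)=\sup_{\tau<\infty}\text{essup}\,E_{(\tau,\infty,\ldots,\infty)}\{\cdot\,|\,\mathcal{B}_\tau\}$, truncates $T$ at the first passage of $y_1$ to a level $\nu$, and then reruns Moustakides' Theorem~2 argument in the enlarged filtration $\mathbb{B}$ — Girsanov at each $\tau$, integration of the resulting family of inequalities against $-dm_1(s)$ (which is precisely how the supremum over change times enters), and the elementary positivity of $p(y)=\mathrm{e}^{y}[g(-y)-g(-\nu^\star)]-g(y)+g(\nu^\star)$ to convert the false-alarm budget $E_{\mathbf{t}_\infty}\{g(y_1(T_\nu))\}\gtrsim c_N^2\gamma$ into the desired bound. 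Your remarks about the enlarged filtration are on the right track but are only one of the pieces needed; the omission of the supremum over $\tau$ is a genuine gap, not a technicality.
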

\begin{proof}
We begin by defining an auxiliary  performance measure
\begin{eqnarray}\label{new index}
J_{KL}^{(N,1)}(T) & = & \mathop{\sup_{\tau<\infty}} ~\textrm{essup}~E_{(\tau,\infty,\ldots,\infty)}\bigg\{\idc{T\ge\tau}\int_{\tau}^{T}\frac{1}{2}(\alpha_1(s;\omega))^2ds|\mathcal{B}_{\tau}\bigg\},
\end{eqnarray}
where $T$ is an arbitrary $\mathbb{B}$-stopping rule.
Notice that, for any $\mathbb{F}$-stopping rule $T\in\mathbb{B}_\gamma$, by construction, we have that
\begin{eqnarray}
\nn J_{KL}^{(N)}(T) \ge J_{KL}^{(N,1)}(T).\end{eqnarray}

Now let $T_\nu^{(1)}$ be the first passage time of $y_1$ to some threshold $\nu>0$
\[T_\nu^{(1)}:=\inf\{t\ge0\,:\,y_1(t)\ge\nu\}.\]
Then trivially, we have $T_\nu:=T\wedge T_\nu^{(1)}\le T$, and that
\begin{eqnarray}
\nn J_{KL}^{(N,1)}(T)\ge J_{KL}^{(N,1)}(T_\nu).
\end{eqnarray} 
To finish the proof, we will show that for any fixed $\epsilon>0$, there exists a $\nu>0$ such that
\begin{eqnarray}
\label{INEQ2}
J_{KL}^{(N,1)}(T_\nu)\ge g(-\nu^\star)-\epsilon.\end{eqnarray}

To that end, we use similar arguments as in Theorem \ref{exp cusum},  for $\tau_1=\tau<\infty=\tau_2,\ldots,\tau_N$, by applying It\^o's lemma to $\{g(-y_1(t))\}_{\tau\wedge T_\nu\le t<T_\nu}$ to obtain that $T_\nu<\infty$, $P_{(\tau,\infty,\ldots,\infty)}$-a.s., and on the event $\{T_\nu>\tau\}$,
\begin{equation}\label{Ito 1d}
E_{(\tau,\infty,\ldots,\infty)}\bigg\{\int_{\tau}^{T_{\nu}}\frac{1}{2}(\alpha_1(s;\omega))^2ds|\mathcal{B}_{\tau} \bigg\} 
= E_{(\tau,\infty,\ldots,\infty)}\{g(-y_1(T_\nu))-g(-y_1(\tau))|\mathcal{B}_{\tau}\}.
\end{equation}
Using Girsanov's theorem at the finite stopping rule $T_\nu\wedge n$ for some $n>\tau$, we have that, 
\begin{eqnarray}
\label{Girsanov1}
\idc{T_\nu\ge \tau}=\idc{T_\nu\ge \tau}E_{\mathbf{t}_\infty}\{\textrm{e}^{u_1(T_\nu\wedge n)-u_1(\tau)}|\mathcal{B}_{\tau}\},
\end{eqnarray}
and 
\begin{multline}
\label{Girsanov2}
\idc{T_\nu\ge \tau}E_{(\tau,\infty,\ldots,\infty)}\{g(-y_1(T_\nu\wedge n)-g(-y_1(\tau))|\mathcal{B}_{\tau}\}\\=\idc{T_\nu\ge \tau}E_{\mathbf{t}_\infty}\{\textrm{e}^{u_1(T_\nu\wedge n)-u_1(\tau)}[g(-y_1(T_\nu\wedge n))-g(-y_1(\tau))]|\mathcal{B}_{\tau}\}.
\end{multline}
Notice that $g(-y)$ is increasing,  $y_1(t\wedge T_\nu)\in[0,\nu]$ and $u_1(T_\nu\wedge n)-u_1(\tau)\le u_1(T_\nu\wedge n)-m_1(T_\nu\wedge n)=y_1(T_\nu\wedge n)\le \nu$, we have that
\begin{eqnarray*}\textrm{e}^{u_1(T_\nu\wedge n)-u_1(\tau)}&\le&\textrm{e}^\nu<\infty,\\
|g(-y_1(T_\nu\wedge n))-g(-y_1(\tau))|&\le&2g(-\nu)<\infty.
\end{eqnarray*}
Using the bounded convergence theorem to \eqref{Girsanov1}-\eqref{Girsanov2} as $n\to\infty$,  we obtain that, on the event $\{T_\nu\ge \tau\}$,
\begin{align*}
1=&E_{\mathbf{t}_\infty}\{\textrm{e}^{u_1(T_\nu)-u_1(\tau)}|\mathcal{B}_{\tau}\},\,\,\,\textrm{and}\\
E_{(\tau,\infty,\ldots,\infty)}\{g(-y_1(T_\nu))-&g(-y_1(\tau))|\mathcal{B}_{\tau}\}\\=&E_{\mathbf{t}_\infty}\{\textrm{e}^{u_1(T_\nu)-u_1(\tau)}[g(-y_1(T_\nu))-g(-y_1(\tau))]|\mathcal{B}_{\tau}\}.
\end{align*}
 In view of \eqref{new index} and \eqref{Ito 1d}, we have that (by substituting $s=\tau$)
\begin{multline}\label{ineq}
  J_{KL}^{(N,1)}(T_\nu)E_{\mathbf{t}_\infty}\{\textrm{e}^{u_1(T_\nu)-u_1(s)}|\mathcal{B}_{s}\}\idc{T_\nu\ge s}\\
  \ge E_{\mathbf{t}_\infty}\{\textrm{e}^{u_1(T_\nu)-u_1(s)}[g(-y_1(T_\nu))-g(-y_1(s))]|\mathcal{B}_{s}\}\idc{T_\nu\ge s}.  \end{multline}
Following the same arguments as in Theorem 2 of \cite{Mous04}, we integrate both sides of the above inequality with respect to $(-dm_1(s))$ for all $s\in[0,T_\nu]$, then take the expectation under $P_{\mathbf{t}_\infty}$. We obtain that 
\begin{multline*}
J_{KL}^{(N,1)}(T_\nu)E_{\mathbf{t}_\infty}\bigg\{\textrm{e}^{u_1(T_\nu)}\int_0^{T_\nu}\textrm{e}^{-u_1(s)}(-dm_1(s))\bigg\}\\
\ge E_{\mathbf{t}_\infty}\bigg\{\textrm{e}^{u_1(T_{\nu})}\int_0^{T_\nu}\textrm{e}^{-u_1(s)}[g(-y_1(T_\nu))-g(-y_1(s))](-dm_1(s))\bigg\}\end{multline*}
Using the fact that $dm_1(s)=0$ off $\{s : y_1(s)=0\}=\{s : u_1(s)=m_1(s)\}$, and $g(0)=0$, we obtain that
\[J_{KL}^{(N,1)}(T_\nu)E_{\mathbf{t}_\infty}\{\textrm{e}^{y_1(T_\nu)}-\textrm{e}^{u_1(T_\nu)}\}\ge E_{\mathbf{t}_\infty}\{[\textrm{e}^{y_1(T_\nu)}-\textrm{e}^{u_1(T_\nu)}]g(-y_1(T_\nu))\}.\]
On the other hand, let $s=0$ in \eqref{ineq} we have that
\[J_{KL}^{(N,1)}(T_\nu)E_{\mathbf{t}_\infty}\{\textrm{e}^{u_1(T_\nu)}\}\ge E_{\mathbf{t}_\infty}\{\textrm{e}^{u_1(T_\nu)}g(-y_1(T_\nu))\}.\]
Combining the above two inequalities, we have
\begin{eqnarray} \label{INEQ3}
J_{KL}^{(N,1)}(T_\nu) {E_{\mathbf{t}_\infty}\{\textrm{e}^{y_1(T_\nu)}\}}\ge {E_{\mathbf{t}_\infty}\{\textrm{e}^{y_1(T_\nu)}g(-y_1(T_\nu))\}}.
\end{eqnarray}

Similarly,  by applying It\^o's lemma to $\{g(y_1(t))\}_{0\le t<T_\nu}$, we  obtain that $P_{\mathbf{t}_\infty}(T_\nu<\infty|\mathcal{B}_s)=1$ for any fixed $s\in\mathbb{R}_+$, and
\begin{eqnarray}
\nn E_{\mathbf{t}_\infty}\bigg\{\int_0^{T_\nu}\frac{1}{2}(\alpha_1(s;\omega))^2ds\bigg\}&=&E_{\mathbf{t}_\infty}\{g(y_1(T_\nu))\}.\end{eqnarray}
On the other hand, using monotone convergence theorem, as $\nu\to\infty$, we have $T_\nu\uparrow T$, and 
\[\lim_{\nu\to\infty}E_{\mathbf{t}_\infty}\bigg\{\int_0^{T_\nu}\frac{1}{2}(\alpha_1(s;\omega))^2ds\bigg\}\,=\,E_{\mathbf{t}_\infty}\bigg\{\int_0^{T}\frac{1}{2}(\alpha_1(s;\omega))^2ds\bigg\}\ge (c_N)^2\gamma.\]
As a result, there exists a $\nu$ large enough, such that 
\[E_{\mathbf{t}_\infty}\{g(y_1(T_\nu))\}\,=\,E_{\mathbf{t}_\infty}\bigg\{\int_0^{T_\nu}\frac{1}{2}(\alpha_1(s;\omega))^2ds\bigg\}\,\ge\,(c_N)^2\gamma-\epsilon\]

Now let us consider the non-negative function $p(y)\,:=\,\textrm{e}^{y}[g(-y)-g(-\nu^\star)]-g(y)+g(\nu^\star)$. We trivially have that $E_{\mathbf{t}_\infty}\{p(y_1(T_\nu))\}\ge0$, which implies that
\begin{align}
 &E_{\mathbf{t}_\infty}\{\textrm{e}^{y_1(T_\nu)}g(-y_1(T_\nu))\}\ge E_{\mathbf{t}_\infty}\{\textrm{e}^{y_1(T_\nu)}\}g(-\nu^\star)+E_{\mathbf{t}_\infty}\{g(y_1(T_\nu))\}-g(\nu^\star)\nn\\
=  &E_{\mathbf{t}_\infty}\{\textrm{e}^{y_1(T_\nu)}\}g(-\nu^\star)+E_{\mathbf{t}_\infty}\{g(y_1(T_\nu))\}-(c_N)^2\gamma
\ge E_{\mathbf{t}_\infty}\{\textrm{e}^{y_1(T_\nu)}\}g(-\nu^\star)-\epsilon\nn\\
\ge& E_{\mathbf{t}_\infty}\{\textrm{e}^{y_1(T_\nu)}\}[g(-\nu^\star)-\epsilon],\nn
\end{align}
since $E_{\mathbf{t}_\infty}\{\textrm{e}^{y_1(T_\nu)}\}\ge1$. The above inequality and \eqref{INEQ3} together imply \eqref{INEQ2}. This completes the proof. \qquad
\end{proof}

In view of \eqref{decomp}, \eqref{UBJ}, Proposition \ref{NCUSUMEQ} and Proposition \ref{lemmaLB}, we have the following inequalities:
\begin{equation}
\label{ULB}
\max\{f_{\mathcal{S}^{(1)},\hbar}(0,\ldots,0),\ldots,f_{\mathcal{S}^{(N)},\hbar}(0,\ldots,0)\}=J_{KL}^{(N)}(T_\hbar) \ge \inf_{T\in\mathbb{B}_\gamma}J_{KL}(T) \ge g(-\nu^\star),
\end{equation}
where $\hbar$ satisfies \eqref{thres} and 
\begin{equation}\label{FRate}
f_{\mathcal{S}^{(0)},\hbar}(0,\ldots,0)=(c_N)^2\gamma,
\end{equation} 
and $\nu^\star$ solves 
\begin{eqnarray}\label{nuFRate}
g(\nu)=(c_N)^2\gamma.\end{eqnarray}
In the sequel we will establish asymptotic optimality of $T_\hbar$ by examining the rate at which the lower and upper bounds in
\eqref{ULB} approach each other as $\gamma \to \infty$ \cite{Hadj06,HadjZhanPoor,HadjSchaPoor}. To this end, 
 we will need to derive the asymptotic behavior of
the quantity  in \eqref{ULB} and \eqref{FRate}. This is done in the following section.

We will distinguish two cases; the first case is the one of signal-strength symmetry and the second one is the
one of signal-strength asymmetry.

\subsection{Signal-strength symmetry}

We capture the signal strength symmetric case by assuming that $|c_i|=1$ for all $i\in\{1,\ldots,N\}$.
In this case, it is natural to consider a simplification of
(\ref{CUSUMmultichart}), namely the one in which $h_1=\ldots=h_N$, since such a choice satisfies (\ref{equalizerexpectation}). For a large fixed $\gamma$, let us denote by $h(\gamma)$ the common threshold determined by \eqref{FRate}. 
The asymptotic analysis in the last section ensures the existence of $h(\gamma)$. The uniqueness of this $h(\gamma)$ follows from Lemma \ref{pde lemma}.
Throughout this subsection, we will slightly abuse our notation to use $h(\gamma)$ to denote the vector $(h(\gamma),\ldots,h(\gamma))$ in this subsection.

First we notice that 
\eqref{FRate}  simplifies to
\begin{eqnarray}
\label{constrsimpl} f_{\mathcal{S}^{(0)},h(\gamma)} (0,\ldots,0)& = & \gamma,
\end{eqnarray}
and \eqref{ULB} simplifies to
\begin{eqnarray}\label{ULBh}
f_{\mathcal{S}^{(1)},h(\gamma)}(0,\ldots,0) \ge \inf_{T\in\mathbb{B}_\gamma}J_{KL}^{(N)}(T) \ge g(-\nu^\star).
\end{eqnarray}

We will demonstrate that the difference between the
upper and lower bounds in \eqref{ULBh}
is bounded by a constant as $\gamma \to \infty$, with $h(\gamma)$ and $\nu^\star$
satisfying \eqref{constrsimpl} and \eqref{nuFRate}, respectively.

\begin{proposition}
\label{asymptotics} 
As $\gamma\to\infty$,
\begin{eqnarray}
\nn f_{\mathcal{S}^{(1)},h(\gamma)}(0,\ldots,0) = \log \gamma +\log
N-1 + o(1).
\end{eqnarray} 
\end{proposition}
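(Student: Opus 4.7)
The plan is to combine the two asymptotic estimates from \S\ref{Asymptotic analysis}. First I would invert the constraint \eqref{constrsimpl} using Proposition \ref{falsealarmasym} to obtain an explicit asymptotic expression for $h(\gamma)$, and then substitute that expression into the asymptotic expansion of $f_{\mathcal{S}^{(1)},h(\gamma)}(0,\ldots,0)$ given by Proposition \ref{prop:bound_delay}.

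Specifically, in the symmetric setting $|c_i|=1$ with all thresholds equal to $h=h(\gamma)$, Proposition \ref{falsealarmasym} specializes to
\begin{equation*}
\left| f_{\mathcal{S}^{(0)},h(\gamma)}(0,\ldots,0) - \frac{\mathrm{e}^{h(\gamma)}}{N}\right| \le \mathcal{O}(h(\gamma)).
\end{equation*}
Combining this with the constraint $f_{\mathcal{S}^{(0)},h(\gamma)}(0,\ldots,0)=\gamma$ yields $\gamma = \frac{1}{N}\mathrm{e}^{h(\gamma)}\bigl(1+\mathcal{O}(h(\gamma)\mathrm{e}^{-h(\gamma)})\bigr)$. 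Taking logarithms (noting that $h(\gamma)\to\infty$ since $\gamma\to\infty$, by monotonicity from Lemma \ref{pde lemma}) gives
\begin{equation*}
h(\gamma) = \log\gamma + \log N + o(1),\qquad \gamma\to\infty.
\end{equation*}

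Next, I would apply Proposition \ref{prop:bound_delay} with $j=1$, $c_1=1$, $h_j=h(\gamma)$, and $\epsilon_i=1/h(\gamma)$ for all $i$, to obtain
\begin{equation*}
\left| f_{\mathcal{S}^{(1)},h(\gamma)}(0,\ldots,0) - (h(\gamma)-1)\right| \le \mathcal{O}\bigl(N h(\gamma)^2 \mathrm{e}^{-h(\gamma)}\bigr) = o(1),
\end{equation*}
using the asymptotic expression for $h(\gamma)$ derived above. Substituting $h(\gamma)=\log\gamma+\log N + o(1)$ then yields $f_{\mathcal{S}^{(1)},h(\gamma)}(0,\ldots,0) = \log\gamma + \log N - 1 + o(1)$, which is the claimed identity.

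The only delicate point is the first step: one must check that the multiplicative error $\mathcal{O}(h(\gamma)\mathrm{e}^{-h(\gamma)})$ inside the logarithm is indeed $o(1)$, which it clearly is because the polynomial factor $h(\gamma)$ is dominated by $\mathrm{e}^{h(\gamma)}$ as $\gamma\to\infty$. Everything else is routine substitution into the estimates already established, so I do not expect any serious technical obstacle.
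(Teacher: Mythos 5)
Your proof is correct and fills in precisely the details that the paper leaves implicit: the paper's proof consists of the single sentence ``Using \eqref{fleadingterm} and \eqref{giapprox}, the result follows,'' and your two steps (inverting \eqref{fleadingterm} to get $h(\gamma)=\log\gamma+\log N+o(1)$, then plugging into \eqref{giapprox} with $j=1$, $c_1=1$) are exactly what that sentence asks the reader to do. This is the same approach, just written out.
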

\begin{proof} Using \eqref{fleadingterm} and \eqref{giapprox}, the result follows. \qquad\end{proof}

As a result, we have the following result.

\begin{theorem}
\label{main} The difference in the optimal detection delay $\inf_{T\in\mathbb{B}_\gamma}J_{KL}^{(N)}(T)$ and the detection delay of $T_{h(\gamma)}$
of \eqref{CUSUMmultichart} with $h(\gamma)$ satisfying \eqref{constrsimpl} is
bounded above by $\log N$, as $\gamma\to \infty$.
\end{theorem}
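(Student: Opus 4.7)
The plan is to sandwich $\inf_{T\in\mathbb{B}_\gamma}J_{KL}^{(N)}(T)$ between the two sides of \eqref{ULBh} and show the gap tends to $\log N$ (so in particular is bounded by $\log N$ asymptotically). All the heavy lifting has already been deposited in Proposition \ref{asymptotics} (upper side) and Proposition \ref{lemmaLB} (lower side); what remains is an asymptotic expansion of $g(-\nu^\star)$ as $\gamma\to\infty$ and a subtraction.

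First I would invoke Proposition \ref{asymptotics} to get the upper bound
\begin{equation*}
f_{\mathcal{S}^{(1)},h(\gamma)}(0,\ldots,0)=\log\gamma+\log N-1+o(1),\qquad\gamma\to\infty,
\end{equation*}
which is precisely the top of \eqref{ULBh} in the symmetric case $|c_i|=1$.

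Next I would expand the lower bound $g(-\nu^\star)$. By \eqref{nuFRate} specialized to $c_N=1$, $\nu^\star$ is the unique positive root of $g(\nu)=\mathrm{e}^\nu-\nu-1=\gamma$. Hence $\mathrm{e}^{\nu^\star}=\gamma+\nu^\star+1$, so $\nu^\star=\log\gamma+\log(1+(\nu^\star+1)/\gamma)=\log\gamma+o(1)$ as $\gamma\to\infty$. Substituting into $g(-\nu^\star)=\mathrm{e}^{-\nu^\star}+\nu^\star-1$ yields
\begin{equation*}
g(-\nu^\star)=\nu^\star-1+\mathrm{e}^{-\nu^\star}=\log\gamma-1+o(1).
\end{equation*}

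Finally, subtracting the two expansions gives
\begin{equation*}
f_{\mathcal{S}^{(1)},h(\gamma)}(0,\ldots,0)-g(-\nu^\star)=\log N+o(1),
\end{equation*}
and combining with \eqref{ULBh} yields
\begin{equation*}
0\le f_{\mathcal{S}^{(1)},h(\gamma)}(0,\ldots,0)-\inf_{T\in\mathbb{B}_\gamma}J_{KL}^{(N)}(T)\le\log N+o(1),
\end{equation*}
which is the claim. There isn't really a main obstacle left: the only subtlety is making sure the implicit estimate $\nu^\star=\log\gamma+o(1)$ is good enough to kill the $\mathrm{e}^{-\nu^\star}$ term at rate $o(1)$, which is immediate. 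Everything substantive has already been handled in the earlier propositions, so this final theorem is essentially a bookkeeping exercise comparing two asymptotic expansions.
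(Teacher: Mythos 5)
Your proposal is correct and follows essentially the same route as the paper: invoke Proposition \ref{asymptotics} for the upper bound, expand $g(-\nu^\star)=\log\gamma-1+o(1)$ from $g(\nu^\star)=\gamma$, and subtract using \eqref{ULBh}. The only difference is that you spell out the $\nu^\star=\log\gamma+o(1)$ step that the paper leaves implicit.
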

\begin{proof}
  From $g(\nu^\star)=\gamma$ it is easily seen that
  \[g(-\nu^\star)=\log\gamma-1+o(1).\]
  The result follows from \eqref{ULBh} and Proposition \ref{asymptotics}.\qquad
\end{proof}

\begin{remark} The asymptotic optimality of the $N$-CUSUM stopping rule in Theorem \ref{main} is of the same strength as the one of Theorem 1 in \cite{HadjZhanPoor}. In other words, the difference in the performance of the unknown optimal stopping rule and the $N$-CUSUM stopping rule is bounded above by a constant as $\gamma \to \infty$. 
\end{remark}

\subsection{Signal-strength asymmetry}

In this section we treat the general case $\max_{i\ge2}|c_i|>1$.
Without loss of generality we can assume that $1=c_1=|c_2|=\ldots=|c_K|$ for some $K<N$ and $\min_{K < i \le N}|c_i|>1$.

Recall that the optimal multi-chart CUSUM thresholds must satisfy \eqref{equalizerexpectation}.  While it seems a formidable task to solve for $\hbar=(h_1,\ldots,h_N)$ explicitly from this constraint and \eqref{FRate}, thanks to the asymptotic analysis in Section \ref{Asymptotic analysis}, we can give a simple explicit linear constraint on $h_i$'s, which will lead to \eqref{equalizerexpectation} asymptotically as $h_i$'s tend to $\infty$.

%

\begin{proposition}
\label{thresholds}
For $\hbar=(h_1,\ldots,h_N)$ such that \begin{eqnarray}
\label{thres}c_1^2 (h_1-1)=c_2^2(h_2-1)=\ldots=c_N^2(h_N-1),
\end{eqnarray}
the condition \eqref{equalizerexpectation} 
 holds asymptotically as $h_1\to\infty$. Moreover,
\begin{eqnarray}
\label{asympthres}
J_{KL}^{(N)}(T_\hbar) & = & h_1-1+o(1),
\end{eqnarray}
as $h_1 \to \infty$.
\end{proposition}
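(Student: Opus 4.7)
The plan is to derive both assertions as direct consequences of Proposition \ref{prop:bound_delay} (which controls $f_{\mathcal{S}^{(j)},\hbar}(0,\ldots,0)$) combined with Proposition \ref{NCUSUMEQ} (which identifies these quantities with the conditional delays $J_j^{(N)}(T_\hbar)$) and the decomposition \eqref{decomp}. The threshold constraint \eqref{thres} is designed precisely to equalize the leading terms $c_j^2(h_j-1)$ from Proposition \ref{prop:bound_delay}, so the heart of the proof is checking that the error term in \eqref{giapprox} is $o(1)$ under this constraint as $h_1 \to \infty$.

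First I would note that \eqref{thres} gives $h_j = 1 + (h_1-1)/c_j^2$ for each $j$, so in particular $c_j^2(h_j - 1) = h_1 - 1$ is identical across all $j\in\{1,\ldots,N\}$. Since $1 = c_1 \le |c_2| \le \cdots \le |c_N|$, we obtain $h_1 \ge h_2 \ge \cdots \ge h_N$, and moreover every $h_i \ge 1 + (h_1-1)/c_N^2$ grows linearly with $h_1$. In particular $h_i \to \infty$ for every $i$ as $h_1 \to \infty$, which is what is needed to apply the asymptotic estimate of Proposition \ref{prop:bound_delay}.

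Next I would plug the constraint \eqref{thres} into \eqref{giapprox}. For each $j$,
\begin{equation}
\bigl|f_{\mathcal{S}^{(j)},\hbar}(0,\ldots,0) - (h_1 - 1)\bigr| \;\le\; \mathcal{O}\!\left(h_j^2 \sum_{i=1}^N \mathrm{e}^{-h_i}\right) \;\le\; \mathcal{O}\!\left(h_1^2 \cdot N\, \mathrm{e}^{-h_N}\right),\nn
\end{equation}
using $h_j \le h_1$ and $h_i \ge h_N$. Since $h_N = 1 + (h_1-1)/c_N^2$ grows linearly in $h_1$, the exponential factor $\mathrm{e}^{-h_N}$ dominates the polynomial factor $h_1^2$, so the right-hand side is $o(1)$ as $h_1 \to \infty$. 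Therefore
\begin{equation}
f_{\mathcal{S}^{(j)},\hbar}(0,\ldots,0) = (h_1 - 1) + o(1), \qquad \forall j\in\{1,\ldots,N\}. \nn
\end{equation}

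Finally, Proposition \ref{NCUSUMEQ} identifies $J_j^{(N)}(T_\hbar) = f_{\mathcal{S}^{(j)},\hbar}(0,\ldots,0)$, so the display above immediately yields $J_j^{(N)}(T_\hbar) = J_1^{(N)}(T_\hbar) + o(1)$ for every $j$, which is the equalizer condition \eqref{equalizerexpectation}. Combining with the decomposition \eqref{decomp}, $J_{KL}^{(N)}(T_\hbar) = \max_j J_j^{(N)}(T_\hbar) = h_1 - 1 + o(1)$, which is \eqref{asympthres}. The only subtle step is the error-term estimate, and even that is straightforward once one observes that the constraint \eqref{thres} forces the \emph{smallest} threshold $h_N$ to be comparable to $h_1$; I expect no real obstacle beyond bookkeeping.
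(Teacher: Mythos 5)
Your proof is correct and follows the same route the paper intends; the paper merely writes ``This is a mere consequence of \eqref{giapprox}'' for its proof, and you have correctly supplied the suppressed bookkeeping: the linear constraint \eqref{thres} forces all $h_i$ to grow linearly with $h_1$ (in particular the smallest, $h_N$), and the error bound in \eqref{giapprox} is then $\mathcal{O}(h_1^2\,\mathrm{e}^{-h_N}) = o(1)$, after which Proposition \ref{NCUSUMEQ} and \eqref{decomp} give the two assertions.
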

\begin{proof} This is a mere consequence of \eqref{giapprox}. \qquad\end{proof}

\begin{remark}
Proposition \ref{thresholds} is similar to the result obtained in Lemma 2 of \cite{HadjZhanPoor}. More specifically, the result in (\ref{thres}) is similar to the one obtained in Lemma 2 of \cite{HadjZhanPoor}  with $c_i^2=\frac{2}{\mu_i^2}$ for $i=1,2,\dots,N$ and that of \textup{(27)} in \cite{HadjZhanPoor} is the same as that of \eqref{asympthres} for $c_1^2=\frac{2}{\mu_1^2}$ and $c_1=1$. Please notice that the detection delay criterion $J_{KL}^{(N)}$ of \eqref{JKL} incorporates the different signal strengths by including a maximization over all $i\in\{1,\ldots,N\}$.
\end{remark}

It now follows that we have the following result.
\begin{proposition}
\label{AsymptoticDD} For $\hbar$ satisfying \eqref{FRate} and \eqref{thres}, we have
\begin{eqnarray}\label{Jineq}
J_{KL}^{(N)}(T_\hbar) & = & \log\gamma + 2\log|c_N|+\log K-1+ o(1),
\end{eqnarray}
as $\gamma \to \infty$.
\end{proposition}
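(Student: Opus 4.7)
The plan is to pin down $h_1$ as a function of $\gamma$ by inverting the false-alarm constraint \eqref{FRate} under the linear equalizer condition \eqref{thres}, and then to read off the detection delay directly from Proposition \ref{thresholds}. Since, whenever \eqref{thres} holds, Proposition \ref{thresholds} already yields
\[
J_{KL}^{(N)}(T_\hbar) \;=\; h_1 - 1 + o(1) \qquad \text{as } h_1 \to \infty,
\]
the entire task reduces to showing that, when the extra constraint \eqref{FRate} is also imposed, $h_1 = \log\gamma + 2\log|c_N| + \log K + o(1)$ as $\gamma \to \infty$.

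To pin down $h_1$, I would start from Proposition \ref{falsealarmasym}:
\[
f_{\mathcal{S}^{(0)},\hbar}(0,\ldots,0) \;=\; \Bigl[\sum_{i=1}^N c_i^{-2} e^{-h_i}\Bigr]^{-1} + \mathcal{O}(h_{\max}).
\]
Combining this with \eqref{FRate} and observing that, at the rate of growth of $h_{\max}$ needed to match $c_N^2 \gamma$, the correction $\mathcal{O}(h_{\max})$ is $o(c_N^2\gamma)$, I obtain
\[
\sum_{i=1}^N c_i^{-2} e^{-h_i} \;=\; \frac{1}{c_N^2 \gamma}\bigl(1 + o(1)\bigr).
\]
Substituting $h_i = 1 + (h_1-1)/c_i^2$ from \eqref{thres} and splitting according to whether $|c_i|=1$ (indices $i=1,\ldots,K$, where then $h_i = h_1$) or $|c_i|>1$, this becomes
\[
K e^{-h_1} + e^{-1}\sum_{i=K+1}^N c_i^{-2}\, e^{-(h_1-1)/c_i^2} \;=\; \frac{1}{c_N^2 \gamma}\bigl(1+o(1)\bigr).
\]

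The crux is to argue that the tail sum over $i>K$ is asymptotically dominated by the leading term $Ke^{-h_1}$, so that $Ke^{-h_1} = (1+o(1))/(c_N^2 \gamma)$. Taking logarithms then immediately gives $h_1 = \log K + 2\log|c_N| + \log\gamma + o(1)$, and plugging this into the identity $J_{KL}^{(N)}(T_\hbar) = h_1 - 1 + o(1)$ from Proposition \ref{thresholds} finishes the argument.

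The main obstacle is precisely this dominance claim. Under \eqref{thres} the thresholds $h_i$ with $i>K$ are in fact strictly smaller than $h_1$ (since $c_i^2 > 1$), so the bare exponentials $e^{-h_i}$ are larger than $e^{-h_1}$; only after the weights $c_i^{-2}$ and the strict gap $|c_i|>1$ are combined can one hope the tail to be $o(e^{-h_1})$. A careful book-keeping of the exponential balance $(h_1-1)(1-1/c_i^2)$ against the algebraic weight $c_i^{-2}$ is required, and one likely needs to exploit the specific polylogarithmic scaling of $h_1$ in $\gamma$ together with the strict separation $\min_{i>K}|c_i| > 1$ to make the tail negligible; this is where I expect the subtlety of the asymmetric case to sit.
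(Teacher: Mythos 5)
Your plan follows the route the paper itself cites: get $J_{KL}^{(N)}(T_\hbar)=h_1-1+o(1)$ from Proposition \ref{thresholds}, then invert the false-alarm constraint \eqref{FRate} via \eqref{fleadingterm} to extract $h_1(\gamma)$. You have also put your finger on exactly the step that the paper leaves silent — whether $Ke^{-h_1}$ is the leading contribution to $\sum_{i=1}^N c_i^{-2}e^{-h_i}$ under \eqref{thres} — and you flag it as the subtle part.

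The difficulty is that this dominance claim is not just subtle; it fails. Under \eqref{thres} we have $h_i-1=(h_1-1)/c_i^2$, so for any $i>K$ (where $|c_i|>1$)
\begin{equation*}
\frac{c_i^{-2}e^{-h_i}}{e^{-h_1}} \;=\; c_i^{-2}\,e^{(h_1-1)\left(1-\frac{1}{c_i^2}\right)} \;\longrightarrow\; \infty \qquad\text{as } h_1\to\infty,
\end{equation*}
since $1-1/c_i^2>0$ is a fixed positive constant. The algebraic weight $c_i^{-2}$ is a bounded constant and cannot offset an exponentially divergent factor; no choice of rate for $h_1$ in terms of $\gamma$ rescues this. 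The sum is therefore dominated by the indices achieving $\max_i|c_i|$, not by $Ke^{-h_1}$. Carrying the calculation through with the correct leading term gives $c_N^{-2}e^{-h_N}\sim 1/(c_N^2\gamma)$, hence $h_N\sim\log\gamma$ and, via \eqref{thres}, $h_1-1=c_N^2(h_N-1)\sim c_N^2\log\gamma$, so that $J_{KL}^{(N)}(T_\hbar)\sim c_N^2\log\gamma$. This does not reduce to $\log\gamma+2\log|c_N|+\log K-1+o(1)$ when $|c_N|>1$. So the proposal cannot be closed along the sketched lines: the last paragraph of your write-up is not a book-keeping exercise but a genuine obstruction, and the strict gap $\min_{i>K}|c_i|>1$ works against the needed dominance rather than for it. Note that the paper's own proof is a one-line citation of Proposition \ref{thresholds} and \eqref{fleadingterm} and does not address this point either, so the gap you identified is not filled there.
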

\begin{proof}The existence and uniqueness of the $\hbar$ determined by  \eqref{FRate}
 and \eqref{thres} can be seem from Lemma \ref{pde lemma} and \eqref{fleadingterm}. We can obtain \eqref{Jineq} from Proposition \ref{thresholds} and \eqref{fleadingterm}.\qquad
\end{proof}

We notice that in the special case in which $K=1$, Proposition \ref{AsymptoticDD} implies that
\begin{eqnarray}
\label{AsymptoticDDK=1}
J_{KL}^{(N)}(T_\hbar) & = & \log \gamma +2\log|c_N| -1 +o(1).
\end{eqnarray}

As a result of Proposition \ref{AsymptoticDD}, we obtain the following two theorems that assert the
asymptotic optimality of $T_\hbar$ of (\ref{CUSUMmultichart}).

\begin{theorem}
\label{mainK=1}
Suppose that $\min_{2 \le i \le N}|c_i|> 1$ and that $\hbar$ satisfies
\eqref{FRate} and \eqref{thres}. Then the optimal detection delay $\inf_{T\in\mathbb{B}_\gamma}J_{KL}^{(N)}(T)$ and the detection delay of $T_\hbar$, $J_{KL}^{(N)}(T_\hbar)$ of \eqref{CUSUMmultichart} converges to $0$
as $\gamma \to \infty$.
\end{theorem}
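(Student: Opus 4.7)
The plan is to sandwich the difference between the $N$-CUSUM delay and the optimal delay using the upper bound from Proposition \ref{AsymptoticDD} and the lower bound from Proposition \ref{lemmaLB}, and then to show these two expressions agree up to $o(1)$ as $\gamma\to\infty$. The hypothesis $\min_{2 \le i \le N}|c_i|>1$ forces $K=1$ in the notation introduced before Proposition \ref{AsymptoticDD}, so the relevant case of that Proposition is the special form \eqref{AsymptoticDDK=1}, which reads
\[
J_{KL}^{(N)}(T_\hbar) \;=\; \log\gamma + 2\log|c_N| - 1 + o(1), \qquad \gamma\to\infty.
\]

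Next I would analyze the lower bound $g(-\nu^\star)$ where $\nu^\star$ is defined by $g(\nu^\star)=(c_N)^2\gamma$ with $g(\nu)=\mathrm{e}^\nu-\nu-1$. Since $g(\nu^\star)\to\infty$ forces $\nu^\star\to\infty$, the exponential term dominates and inverting $g$ yields $\nu^\star = \log((c_N)^2\gamma)+o(1) = \log\gamma + 2\log|c_N| + o(1)$. Substituting into $g(-\nu^\star)=\mathrm{e}^{-\nu^\star}+\nu^\star-1$ and using $\mathrm{e}^{-\nu^\star}=o(1)$ gives
\[
g(-\nu^\star) \;=\; \log\gamma + 2\log|c_N| - 1 + o(1).
\]

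Finally, I would invoke the sandwich \eqref{ULB}, namely
\[
J_{KL}^{(N)}(T_\hbar) \;\ge\; \inf_{T\in\mathbb{B}_\gamma}J_{KL}^{(N)}(T) \;\ge\; g(-\nu^\star),
\]
and combine the two asymptotic expansions above to conclude that the difference $J_{KL}^{(N)}(T_\hbar)-\inf_{T\in\mathbb{B}_\gamma}J_{KL}^{(N)}(T)$ is squeezed between $0$ and $o(1)$, hence tends to $0$ as $\gamma\to\infty$.

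There is no real technical obstacle once Proposition \ref{AsymptoticDD} and Proposition \ref{lemmaLB} are in hand; both sides of the sandwich were engineered to match at the order $\log\gamma + 2\log|c_N|-1$, which is precisely what makes the $K=1$ case asymptotically sharp (in contrast with Theorem \ref{main}, where a residual $\log N$ gap persists). The only minor point to be careful about is the inversion $g(\nu^\star)=(c_N)^2\gamma\Rightarrow\nu^\star=\log\gamma+2\log|c_N|+o(1)$, which follows from $\log g(\nu)=\nu+\log(1-(\nu+1)\mathrm{e}^{-\nu})=\nu+o(1)$.
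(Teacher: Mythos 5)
Your proposal follows the paper's argument exactly: both invoke the sandwich \eqref{ULB} together with the expansion \eqref{AsymptoticDDK=1} for the upper bound and the inversion $g(\nu^\star)=(c_N)^2\gamma \Rightarrow g(-\nu^\star)=\log\gamma+2\log|c_N|-1+o(1)$ for the lower bound, and conclude by squeezing. The only difference is that you spell out the inversion of $g$ in slightly more detail than the paper, which simply asserts \eqref{lbasym}; the substance is identical.
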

\begin{proof}The asymptotic lower bound in \eqref{ULB} is $g(-\nu^\star)$ with $g(\nu^\star)=(c_N)^2\gamma$, which implies that
\begin{eqnarray}
  \label{lbasym}
  g(-\nu^\star)=\log\gamma+2\log|c_N|-1+o(1).
  \end{eqnarray}
  From  \eqref{ULB} and \eqref{AsymptoticDDK=1} we obtain
\begin{eqnarray}\nonumber
J_{KL}^{(N)}(T_\hbar)-\inf_{T\in\mathbb{B}_\gamma}J_{KL}^{(N)}(T) & \le &  o(1),
\end{eqnarray}
as $\gamma \to \infty$. \qquad \end{proof}

\begin{theorem}
\label{maingeneralK} Suppose that $\min_{K < i \le N}|c_i|> |c_1|=\ldots=|c_K|=1$ and that $\hbar$ satisfies
\eqref{FRate} and \eqref{thres}. Then the optimal detection delay  $\inf_{T\in\mathbb{B}_\gamma}J_{KL}^{(N)}(T)$ and the detection delay of $T_\hbar$, $J_{KL}^{(N)}(T_\hbar)$ of \eqref{CUSUMmultichart} are bounded above by
$\log K,$
as $\gamma \to \infty$.
\end{theorem}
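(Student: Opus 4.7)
The statement should parallel Theorem \ref{mainK=1}, with the conclusion being that the difference $J_{KL}^{(N)}(T_\hbar) - \inf_{T\in\mathbb{B}_\gamma} J_{KL}^{(N)}(T)$ is bounded above by $\log K$ as $\gamma\to\infty$. The plan is to combine the upper-bound asymptotics provided by Proposition \ref{AsymptoticDD} with the lower-bound asymptotics derived from Proposition \ref{lemmaLB}, and then subtract.

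First, I would invoke Proposition \ref{AsymptoticDD} directly: for the threshold vector $\hbar$ satisfying both \eqref{FRate} and \eqref{thres}, the detection delay of the $N$-CUSUM rule admits the expansion
\begin{equation*}
J_{KL}^{(N)}(T_\hbar) \,=\, \log\gamma + 2\log|c_N| + \log K - 1 + o(1),
\end{equation*}
as $\gamma\to\infty$. No new work is required here.

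Next, I would handle the lower bound exactly as in the proof of Theorem \ref{mainK=1}. Proposition \ref{lemmaLB} and the chain \eqref{ULB} give
\begin{equation*}
\inf_{T\in\mathbb{B}_\gamma}J_{KL}^{(N)}(T) \,\ge\, g(-\nu^\star),
\end{equation*}
where $\nu^\star$ is the positive root of $g(\nu) = (c_N)^2\gamma$. From $g(\nu^\star)=\textrm{e}^{\nu^\star}-\nu^\star-1=(c_N)^2\gamma$, one has $\nu^\star = \log\gamma + 2\log|c_N| + o(1)$, and consequently
\begin{equation*}
g(-\nu^\star) \,=\, \textrm{e}^{-\nu^\star}+\nu^\star-1 \,=\, \log\gamma + 2\log|c_N| - 1 + o(1),
\end{equation*}
which is the same calculation \eqref{lbasym} used in the proof of Theorem \ref{mainK=1}.

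Finally, subtracting the two expansions I would obtain
\begin{equation*}
J_{KL}^{(N)}(T_\hbar) - \inf_{T\in\mathbb{B}_\gamma}J_{KL}^{(N)}(T) \,\le\, \log K + o(1),
\end{equation*}
which is the claimed bound. I do not anticipate any genuine obstacle: all of the analytic difficulty is absorbed into the earlier asymptotic expansions (Propositions \ref{falsealarmasym}, \ref{prop:bound_delay}, \ref{thresholds}, \ref{AsymptoticDD}) and into the lower-bound argument of Proposition \ref{lemmaLB}. The only mildly delicate point is the cancellation of the $2\log|c_N|$ terms between the upper and lower bounds—exactly the cancellation that made the bound collapse to $o(1)$ when $K=1$ in Theorem \ref{mainK=1}, but which now leaves the residual $\log K$ coming from the $K$ sensors of equal (maximal reciprocal) signal strength that contribute symmetrically to the equalizer rule.
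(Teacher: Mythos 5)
Your proposal follows exactly the paper's own argument: invoke Proposition \ref{AsymptoticDD} for the upper bound on $J_{KL}^{(N)}(T_\hbar)$, use Proposition \ref{lemmaLB} and the chain \eqref{ULB} together with the expansion \eqref{lbasym} of $g(-\nu^\star)$ for the lower bound, and subtract to obtain the $\log K + o(1)$ gap. The calculation of $\nu^\star$ and $g(-\nu^\star)$ and the observation about the cancellation of the $2\log|c_N|$ terms are both correct and match the paper's treatment.
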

\begin{proof} The asymptotic lower bound in \eqref{ULB} is $g(-\nu^\star)$. From  \eqref{ULB}, \eqref{lbasym}  and Proposition \ref{AsymptoticDD} we obtain
\begin{eqnarray}\nonumber
J_{KL}^{(N)}(T_\hbar)-\inf_{T\in\mathbb{B}_\gamma}J_{KL}^{(N)}(T) & \le & \log K+ o(1),
\end{eqnarray}
as $\gamma \to \infty$. \qquad\end{proof}

\begin{remark}The results of Theorems \ref{mainK=1}  and \ref{maingeneralK} show the same strength in asymptotic optimality of the $N$-CUSUM rule \eqref{CUSUMmultichart} for $c_1^2=\frac{2}{\mu_1^2}$ and $c_1=1$ as that of the proposed rule in \cite{HadjZhanPoor}. 
\end{remark}

\section{Conclusion}

In this paper we have established asymptotic optimality of the $N$-CUSUM
stopping rule (\ref{CUSUMmultichart}) as the solution to the
stochastic optimization problem of (\ref{eqnproblemKLG}). This
asymptotic optimality is summarized in Theorems \ref{main},
\ref{mainK=1} and \ref{maingeneralK}. The resemblance of these results
to the results of \cite{HadjZhanPoor} are mathematically not
surprising since the Kullback-Leibler information number is included
in the criterion and thus does not appear in the formulas for the
detection delay of $T_\hbar$. Yet, the problem in this paper is
strikingly more general than the one treated in
\cite{HadjZhanPoor}. In particular, the processes that are treated in
this paper are general It\^o processes which can capture signals of
general dependencies. More importantly however, in this set-up the
processes are allowed to be coupled with each other, which provides a
natural way in which to capture most systems since it is the case that
signals received in one sensor can affect what is seen by another. The
results in this work are yet another case 
\cite{Hadj05,HadjHernStam,Hadj06,Hadj07}, which captures the
robustness of the $N$-CUSUM stopping rule in the problem of quickest
detection.

\appendix
\setcounter{secnumdepth}{0}
\section{Proof of Lemma \ref{linear}}
Since $\alpha_t^{(i)}$'s are linear functions of $Z_t^{(j)}$'s, we know that $\alpha_t^{(i)}$'s satisfy the Novikov condition \eqref{novikov} from the discussion on page 234 of \cite{LiptShir1}.

To show that \eqref{Energyi} holds, we introduce a new process $Y_t:=\alpha_t^{(1)}$. An important observation is that $\{Y_t\}_{t\ge0}$ is a Ornstein-Uhlenbeck process under under any fixed $P_{\tau_1,\ldots,\tau_N}$, $\tau_i\in\{0,\infty\}$:
\begin{equation}\label{OU}dY_t=\lambda Y_t\,dt+\sigma\, dW_t,\,\,\,\lambda=\bigg(\sum_{i=1}^N\idc{\tau_i=0}c_i\beta_i\bigg)\in\mathbb{R},\,\,\,\sigma=\sqrt{\sum_{i=1}^N\beta_i^2}\in\mathbb{R}_+, \end{equation}
where $\{W_t\}_{t\ge0}$ is a standard Brownian motion under $\mathbb{P}$:
\begin{eqnarray*}W_t&=&\frac{1}{\sqrt{\sum_{i=1}^N\beta_i^2}}\sum_{i=1}^N\beta_i w_t^{(i)}.
\end{eqnarray*}
To facilitate later calculation, let us denote by $Q^{(\lambda)}$ the law of $Y_t$ in \eqref{OU}, by $E^{Q^{(\lambda)}}$ the expectation under $Q^{(\lambda)}$, and $\mathcal{F}^Y_t=\sigma(Y_s\,;\,s\le t)$. It is then sufficient to obtain \eqref{Energyi} from 
\begin{equation}
\label{OUP}
Q^{(\lambda)}\bigg(\int_0^t Y_s^2ds<\infty\bigg)\,=\,1\,=\,Q^{(\lambda)}\bigg(\int_0^\infty Y_s^2ds=\infty\bigg),\,\,\,\forall t\in \mathbb{R}_+.
\end{equation}
In the sequel we prove that \eqref{OUP} holds. 

Using Girsanov theorem, for any $t \in\mathbb{R}_+$ and $q>\frac{3\lambda^2}{2\sigma^2}$,
\begin{align}\label{Laplace}
&E^{Q^{(\lambda)}}\bigg\{\exp\bigg(-q\int_0^t Y_s^2ds\bigg)\bigg\}\\
=&E^{Q^{(0)}}\bigg\{\exp\bigg(-q\int_0^t Y_s^2ds\bigg)\,\frac{dQ^{(\lambda)}}{dQ^{(0)}}\bigg|_{\mathcal{F}_t^Y}\bigg\}\nn\\
=&E^{Q^{(0)}}\bigg\{\exp\bigg(-\bigg(q+\frac{1}{2}\frac{\lambda^2}{\sigma^2}\bigg)\int_0^t Y_s^2ds+\frac{\lambda}{\sigma^2}\int_0^t Y_sdY_s\bigg)\bigg\}\nonumber\\
=&\textrm{e}^{\frac{\lambda}{\sigma^2}Y_0^2-\lambda t}E^{Q^{(0)}}\bigg\{\exp\bigg(-\bigg(q+\frac{1}{2}\frac{\lambda^2}{\sigma^2}\bigg)\int_0^t Y_s^2ds+\frac{\lambda}{\sigma^2}Y_t^2\bigg)\bigg\}\nonumber\\
=&\textrm{e}^{\frac{\lambda}{\sigma^2}Y_0^2-\lambda t}\frac{1}{\sqrt{\cosh(b t)-\frac{2\lambda}{b}\sinh(b t)}}\exp\bigg(-\frac{Y_0^2}{2\sigma^2}\frac{1-\frac{2\lambda}{b}\coth(b t)}{\coth(b t)-\frac{2\lambda}{b}}\bigg)<\infty,\nn
\end{align}
where $b:=\sqrt{2q\sigma^2+\lambda^2}$. Here we used (2.1) on page 18 of \cite{ABM}. Therefore, the first equality of \eqref{OUP} holds. Moreover, let $t\to\infty$ in \eqref{Laplace} and apply bounded convergence theorem, we have that 
\[E^{Q^{(\lambda)}}\bigg\{\exp\bigg(-q\int_0^\infty Y_s^2ds\bigg)\bigg\}=0\]
Hence, the second equality in \eqref{OUP} also holds.

\section{Proof of Lemma \ref{pde lemma}}
To prove Lemma \ref{pde lemma}, the following result will be useful.
\begin{lemma}\label{Ind RV}
Suppose $X$ and $Y$ are two independent random variables, and \textup{Supp}$(X)=\mathbb{R}_+$. Then $P(X<Y)<1$. 
\end{lemma}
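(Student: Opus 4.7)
My plan is to prove the contrapositive intuition directly: exhibit an event of positive probability on which $X\geq Y$, which forces $P(X<Y)<1$. The two ingredients are (a) tightness of the distribution of $Y$ (since $Y$ is a real-valued random variable, hence finite almost surely), and (b) the full support of $X$ on $\mathbb{R}_+$, which guarantees that arbitrarily large values of $X$ occur with positive probability.

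First, I would choose a threshold $M\in\mathbb{R}_+$ large enough that $P(Y\leq M)>0$; this is possible because $Y<\infty$ almost surely, so $P(Y\leq M)\to 1$ as $M\to\infty$. Next, I would use the hypothesis $\mathrm{Supp}(X)=\mathbb{R}_+$ to conclude that for every such $M$, we have $P(X>M)>0$: otherwise $P(X\leq M)=1$, so the support of $X$ would be contained in the closed set $[0,M]$, contradicting $\mathrm{Supp}(X)=\mathbb{R}_+$.

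Combining these via independence gives
\begin{equation*}
P(X\geq Y)\;\geq\;P(X>M,\,Y\leq M)\;=\;P(X>M)\,P(Y\leq M)\;>\;0,
\end{equation*}
since on the event $\{X>M,\,Y\leq M\}$ we have $X>M\geq Y$. Therefore $P(X<Y)=1-P(X\geq Y)<1$, which is what we wanted.

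There is no real obstacle here; the proof is a short measure-theoretic argument that only uses the definition of support, the finiteness of real-valued random variables, and independence. The only subtle point worth stating carefully is why $\mathrm{Supp}(X)=\mathbb{R}_+$ implies $P(X>M)>0$ for every $M\geq 0$, which follows from the standard characterization of the support as the smallest closed set of full measure.
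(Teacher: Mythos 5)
Your proof is correct and rests on the same three ingredients as the paper's argument: full support of $X$ (so $P(X>M)>0$ for every $M$), almost-sure finiteness of $Y$ (so $P(Y\le M)>0$ for some $M$), and independence. The paper instead conditions on $Y$, writes $P(X<Y)=\int P(X<y)\,dF_Y(y)$ and upper-bounds it by $P(X<c)P(Y<c)+P(Y\ge c)<1$, whereas you lower-bound the complementary event $\{X\ge Y\}$ by $P(X>M)P(Y\le M)>0$ directly; these are two sides of the same coin, with your version being slightly more streamlined.
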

\begin{proof}
Since \textup{Supp}$(X)=\mathbb{R}_+$, for any $c\in\mathbb{R}_+$, we have $P(X<c)<1$. Let us denote by $F_Y(y)=P(Y\le y)$, then
\begin{eqnarray*}
P(X<Y)&=&\int_{0+}^\infty P(X<y)dF_Y(y)\\
&=&\int_{0}^{c-} P(X<y)dF_Y(y)+\int_{c}^\infty P(X<y)dF_Y(y)\\
&\le&P(X<c)P(Y<c)+P(Y\ge c)<1.
\end{eqnarray*} 
This completes the proof.\qquad
\end{proof}

We are ready to give the proof of Lemma \ref{pde lemma}.
  Without loss of generality, we prove the lemma for $N=2$. 
    
    The main tool we will use in this proof is Feynman-Kac representation of the solution to an elliptic PDE. In particular, fix a filtered probability space $(\Omega,(\mathcal{F}_t)_{t\ge0}, P)$ with two independent standard Brownian motions $\{B_t^{(1)}\}_{t\ge 0}$ and $\{B_t^{(2)}\}_{t\ge0}$.
    For any $\mathcal{S}=(\mathcal{S}_1,\mathcal{S}_2)\in\{\pm 1\}^2$, $c_2\neq0$, consider two independent reflected Brownian motions defined as
    \begin{eqnarray*}
      X_t^{\mathcal{S}_1}&:=&X_0+\mathcal{S}_1t+\sqrt{2}B_t^{(1)}-\inf_{0\le s\le t}\{(X_0+\mathcal{S}_1s+\sqrt{2}B_s^{(1)})\wedge 0\},\\
      Y_t^{\mathcal{S}_2}&=&Y_0+\mathcal{S}_2 c_2^{-2}t+\sqrt{2}|c_2|^{-1} B_t^{(2)}-\inf_{0\le s\le t}\{(Y_0+\mathcal{S}_2c_2^{-2} s+\sqrt{2}|c_2|^{-1} B_s^{(2)})\wedge0\}.
    \end{eqnarray*}
We denote their first passage times by
\begin{eqnarray*}
T_{a}^{X^{\mathcal{S}_1}}&=&\inf\{t\ge0\,:\,X_t^{\mathcal{S}_1}\ge a\},\,\,\,\forall a\in\mathbb{R}\\
T_b^{Y^{\mathcal{S}_2}}&=&\inf\{t\ge0\,:\,Y_t^{\mathcal{S}_2}\ge b\},\,\,\,\forall b\in\mathbb{R}.
\end{eqnarray*}
For convenience, we will also denote by
\[P^{x,\cdot}(\,\cdot\,)=P(\,\cdot\,|X_0=x),\,P^{\cdot,y}(\,\cdot\,)=P(\,\cdot\,|Y_0=x),\,P^{x,y}(\,\cdot\,)=P(\,\cdot\,|X_0=x, Y_0=y).\]
Then by Feynman-Kac theorem, for any $(x,y)\in[0,h_1]\times[0,h_2]$,
\[0\le E^{x,y}\{T^{X^{\mathcal{S}_1}}_{h_1}\wedge T^{Y^{\mathcal{S}_2}}_{h_2}\}=f_{\mathcal{S},\hbar}(x,y),\]
where $f_{\mathcal{S},\hbar}$  solves \eqref{pde1}-\eqref{cond1}. So it suffices to prove that, for $(x',y')\in[0,x]\times[0,y]$, $0<h_1'<h_1$, $0<h_2'<h_2$, and $\mathcal{S}_i'\le\mathcal{S}_i$, $i=1,2$,  
\begin{eqnarray}
\label{monotone1}E^{x,y}\{T^{X^{\mathcal{S}_1}}_{h_1}\wedge T^{Y^{\mathcal{S}_2}}_{h_2}\}&\le&E^{x',y'}\{T^{X^{\mathcal{S}_1}}_{h_1}\wedge T^{Y^{\mathcal{S}_2}}_{h_2}\}\wedge E^{x,y}\{T^{X^{\mathcal{S}_1'}}_{h_1}\wedge T^{Y^{\mathcal{S}_2'}}_{h_2}\},\\
\label{monotone2}E^{0,0}\{T^{X^{\mathcal{S}_1}}_{h_1}\wedge T^{Y^{\mathcal{S}_2}}_{h_2}\}&>&E^{0,0}\{T^{X^{\mathcal{S}_1}}_{h_1'}\wedge T^{Y^{\mathcal{S}_2}}_{h_2}\}\vee E^{0,0}\{T^{X^{\mathcal{S}_1}}_{h_1}\wedge T^{Y^{\mathcal{S}_2}}_{h_2'}\}.
\end{eqnarray}
In the sequel we will prove \eqref{monotone1} and \eqref{monotone2} hold. 

Using continuity of the sample path and Markov shifting operator, we have
\begin{eqnarray*}T_{h_1}^{X^{\mathcal{S}_1}}&=&T_{x}^{X^{\mathcal{S}_1}}+T_{h_1}^{X^{\mathcal{S}_1}}\circ\theta(T_{x}^{X^{\mathcal{S}_1}})\,\ge\, T_{h_1}^{X^{\mathcal{S}_1}}\circ\theta(T_{x}^{X^{\mathcal{S}_1}}),\,\,\,P^{x',\cdot}\text{-a.s.}\\
T_{h_2}^{Y^{\mathcal{S}_2}}&=&T_{y}^{Y^{\mathcal{S}_2}}+T_{h_2}^{Y^{\mathcal{S}_2}}\circ\theta(T_{y}^{Y^{\mathcal{S}_2}})\,\ge\, T_{h_1}^{Y^{\mathcal{S}_2}}\circ\theta(T_{y}^{Y^{\mathcal{S}_2}}),\,\,\,P^{\cdot,y'}\text{-a.s.}\end{eqnarray*}
Hence we have the first inequality in \eqref{monotone1}. Similarly, for any $y''\in[0,h_2)$,
\[T^{X^{\mathcal{S}_1}}_{h_1}\wedge T^{Y^{\mathcal{S}_2}}_{h_2}>T^{X^{\mathcal{S}_1}}_{h_1'}\wedge T^{Y^{\mathcal{S}_2}}_{h_2}=0,\,\,\,P^{h_1',y''}\text{-a.s.}\]
Thus, 
\[E^{h_1',y''}\{T^{X^{\mathcal{S}_1}}_{h_1}\wedge T^{Y^{\mathcal{S}_2}}_{h_2}\}>0.\]
Using Lemma \ref{Ind RV}  we know that $P^{0,0}(T^{X^{\mathcal{S}_1}}_{h_1'}<T^{Y^{\mathcal{S}_2}}_{h_2})>0$. It follows that
\begin{align*}
&E^{0,0}\{T^{X^{\mathcal{S}_1}}_{h_1}\wedge T^{Y^{\mathcal{S}_2}}_{h_2}\}\\
=&E^{0,0}\{T^{X^{\mathcal{S}_1}}_{h_1'}\wedge T^{Y^{\mathcal{S}_2}}_{h_2}\}+\int_0^{h_2} P^{0,0}(T^{X^{\mathcal{S}_1}}_{h_1'}<T^{Y^{\mathcal{S}_2}}_{h_2}, Y_{T^{X^{\mathcal{S}_1}}_{h_1'}}^{\mathcal{S}_2}\in dy'') \,E^{h_1',y''}\{T^{X^{\mathcal{S}_1}}_{h_1}\wedge T^{Y^{\mathcal{S}_2}}_{h_2}\}\\
>&E^{0,0}\{T^{X^{\mathcal{S}_1}}_{h_1'}\wedge T^{Y^{\mathcal{S}_2}}_{h_2}\}.
\end{align*}
This proves \eqref{monotone2}.

Finally, using  Lemma 3 of \cite{Hadj06}, we have,
\[X_t^{\mathcal{S}_1'}\le X_t^{\mathcal{S}_1}, \,\,\,Y_t^{\mathcal{S}_2'}\le Y_t^{\mathcal{S}_2},\,\,\forall t\ge 0,\,\,~P\text{-a.s.}\]
Hence,
\begin{eqnarray*}T_{h_1}^{X^{\mathcal{S}_1}}\le T_{h_1}^{X^{\mathcal{S}_1'}},\,\,
T_{h_2}^{Y^{\mathcal{S}_2}}\le T_{h_2}^{Y^{\mathcal{S}_2'}},\,\,\,P\text{-a.s.}
\end{eqnarray*}
which implies the second inequality in \eqref{monotone1}. The finiteness of $f_{\mathcal{S},\hbar}(0,0)$ follows from that of $E^{0,\cdot}\{T_{h_1}^{X^{\mathcal{S}_1}}\}$. The latter can be easily shown to be  equal to $g(-\mathcal{S}_1h_1)<\infty$ (see for example, \cite{Mous04}).

\section{Proof of Lemma \ref{finite lemma}}
From \eqref{Energyi} we have that, for any fixed ${\tau}<\infty$ and $\mathbf{t}=(\tau_1,\ldots,\tau_N)\in\{0,\infty\}^N$, $\int_{{\tau}}^\infty\frac{1}{2}(\alpha_1(s;\omega))^2ds=\infty$, $$P_{\mathbf{t}}\bigg(\int_\tau^\infty\frac{1}{2}(\alpha_1(s;\omega))^2ds=\infty\bigg)=1.$$ Now without loss of generality, 
assume that $0\le\tau_1,\ldots,\tau_r<\infty$ and $\tau_{r+1}=\ldots=\tau_N=\infty$, we have from \eqref{RNDeri} that 
\[\frac{dP_{\tau_1,\ldots,\tau_r,\tau_{r+1},\ldots\tau_N}}{dP_{0,\ldots,0,\infty,\ldots,\infty}}\bigg|_{\mathcal{B}_t}=\exp\bigg(-\sum_{i=1}^ru_i(t\wedge\tau_i)\bigg)>0,\,\,\,\forall t\in \mathbb{R}_+.\]
Hence, $P_{\mathbf{t}}(\int_{{t}}^\infty\frac{1}{2}(\alpha_1(s;\omega))^2ds=\infty)=1$ for any $\mathbf{t}=(\tau_1,\ldots,\tau_N)\in(\ol{\mathbb{R}}_+)^N$. Moreover, for any fixed  $\tau'<\infty$, 
\[1=P_{\mathbf{t}}\bigg(\int_{{t}}^\infty\frac{1}{2}(\alpha_1(s;\omega))^2ds=\infty\bigg)=E_{\mathbf{t}}\bigg\{P_{\mathbf{t}}\bigg(\int_{{t}}^\infty\frac{1}{2}(\alpha_1(s;\omega))^2ds=\infty|\mathcal{B}_{{\tau'}}\bigg)\bigg\},\]
there must be that $P_{\mathbf{t}}(\int_{t}^\infty\frac{1}{2}(\alpha_1(s;\omega))^2ds=\infty|\mathcal{B}_{{\tau'}})=1$.

\section{Proof of Lemma \ref{integral lemma}}
Without loss of generality,  we prove the lemma for $N=2$.

Using notations in the proof of Lemma \ref{pde lemma}, we have that 
\begin{eqnarray*}
f_{\mathcal{S},\hbar}(x,y)\,=\,E^{x,y}\{T^{X^{\mathcal{S}_1}}_{h_1}\wedge T^{Y^{\mathcal{S}_2}}_{h_2}\}&=&\int_0^\infty P^{x,y}(T^{X^{\mathcal{S}_1}}_{h_1}>t, T^{Y^{\mathcal{S}_2}}_{h_2}>t)\,dt\\
&=&\int_0^\infty P^{x,\cdot}(T^{X^{\mathcal{S}_1}}_{h_1}>t)\,\cdot P^{\cdot,y}( T^{Y^{\mathcal{S}_2}}_{h_2}>t)\,dt.
\end{eqnarray*}
Hence, it suffices to show that
\begin{eqnarray*}
P^{x,\cdot}(T^{X^{\mathcal{S}_1}}_{h_1}>t)&=&K_{\mathcal{S}_1,\frac{1}{h_1}}\bigg(\frac{c_1^{-2}t}{h_1},\frac{x}{h_1}\bigg),\\
P^{\cdot,y}(T^{Y^{\mathcal{S}_2}}_{h_2}>t)&=&K_{\mathcal{S}_2,\frac{1}{h_2}}\bigg(\frac{c_2^{-2}t}{h_2},\frac{y}{h_2}\bigg).
\end{eqnarray*}
However, it is easily seen that both sides of the above equations satisfy the backward Kolmogorov equation \eqref{PDE_Gi} and boundary condition \eqref{PDE_GiCond}. The assertion now follows from uniqueness theorem for partial differential equations. 
 
 Finally, 
 \[\int_0^\infty K_{+1,\frac{1}{h_1}}\bigg(\frac{c_1^{-2}t}{h_1},0\bigg)dt=\int_0^\infty P^{0,\cdot}(T_{h_1}^{X^{+1}}>t)dt=E^{0,\cdot}\{T_{h_1}^{X^{+1}}\}=g(-h_1).\]
 Hence, \eqref{G integrals} holds for $i=1$. The general case can be similarly proven.

\section{Proofs of Propositions \ref{falsealarmasym} and \ref{prop:bound_delay}}
 We now prove the asymptotic results in Propositions \ref{falsealarmasym} and \ref{prop:bound_delay}. To this end,
we transform the function
$K_{\pm,\epsilon}(t,z)$ defined in \eqref{PDE_Gi}-\eqref{PDE_GiCond} 

to the solution of a
heat equation.  Using Sturm-Liouville theory, we are able to express $K_{\pm1,\epsilon}(t,z)$ as series. From these series, all necessary
asymptotic expansions can be derived directly.
\begin{theorem}
  Let $0<\epsilon<1/2$ and let $K_{-1,\epsilon}$ be the solution to
 \eqref{PDE_Gi}-\eqref{PDE_GiCond} for $\mathcal{S}_i=-1$ and $\epsilon_i=\epsilon>0$. Then, for $t>0$,
\begin{eqnarray}
K_{-1,\epsilon}(t,0) = K_{-1,\epsilon}^{0}(t)+H_\epsilon(t),\,\, H_\epsilon(t):=\sum_{n=1}^{\infty}K_{-1,\epsilon}^n(t),\label{Kminus}
\end{eqnarray}
and the functions $K_{-1,\epsilon}^0, K_{-1,\epsilon}^1,\ldots$, are explicitly given by
\begin{eqnarray}
K_{-1,\epsilon}^{0}(t) &=& A^0_\epsilon\,{\mathrm{e}}^{(\omega^2\epsilon-\frac{1}{4\epsilon}) t}, \label{K0}\\
A^0_\epsilon &=& \frac{1}{\frac{1-{\mathrm{e}}^{-4\omega}}{2\omega}-2{\mathrm{e}}^{-2\omega}}\left(1-{\mathrm{e}}^{-2\omega}\right)^2\frac{1}{\omega+\frac{1}{2\epsilon}}\,
{\mathrm{e}}^{\omega-1/(2\epsilon)},\label{A0}\\
\nn K_{-1,\epsilon}^n(t) &=& A^n_\epsilon\,{\mathrm{e}}^{-1/(2\epsilon)}\,\frac{\sin\omega_n}{\omega_n}\,{\mathrm{e}}^{-(\omega_n^2\epsilon+\frac1{4\epsilon})\,t}\,,~n=1,2,\ldots \\
\nn A^n_\epsilon &=& \frac{8\epsilon^2\omega_n^2}{4\epsilon^2\omega_n^2+1-2\epsilon}\,,
\end{eqnarray}
 where $\omega$ and $\omega_n$'s
are respectively the positive solutions to the transcendental equations
\begin{eqnarray}\label{omegaeq}
\tanh\omega&=&2\epsilon\omega,\\
\tan\omega_n& =& 2\epsilon\omega_n.\label{omeganeq}
\end{eqnarray}
\label{ts_theorem_minus}
\end{theorem}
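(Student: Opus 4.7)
My plan is to reduce the advection-diffusion equation \eqref{PDE_Gi} to a heat equation on $[0,1]$, apply separation of variables, and expand in the resulting Sturm--Liouville eigenbasis. Throughout, I use the natural initial datum $K_{-1,\epsilon}(0,z) = 1$ for $z \in [0,1)$ inherited from the survival probability representation established in the proof of Lemma \ref{integral lemma}.

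First, I would eliminate the first-order term in $z$ by the gauge transformation
\[
K_{-1,\epsilon}(t,z) \,=\, \exp\!\bigl(z/(2\epsilon) - t/(4\epsilon)\bigr)\,v(t,z).
\]
A direct substitution shows that $v$ satisfies the pure heat equation $v_t = \epsilon v_{zz}$ on $[0,1]$, together with the Dirichlet condition $v(t,1)=0$, the Robin condition $v_z(t,0) + \tfrac{1}{2\epsilon}v(t,0) = 0$, and initial datum $v(0,z) = \mathrm{e}^{-z/(2\epsilon)}$. Next, I separate variables and study the self-adjoint Sturm--Liouville problem
\[
\epsilon\phi'' + \lambda\phi = 0,\qquad \phi'(0) + \tfrac{1}{2\epsilon}\phi(0) = 0,\qquad \phi(1) = 0.
\]
Writing $\lambda = \epsilon\mu^2$ with $\mu$ real yields the oscillatory branch $\phi_n(z) = \sin\omega_n z - 2\epsilon\omega_n\cos\omega_n z$ together with the eigenvalue equation \eqref{omeganeq}, giving the countable family of positive eigenvalues $\lambda_n = \epsilon\omega_n^2$. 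Setting $\lambda = -\epsilon\mu^2$ instead yields the hyperbolic branch $\phi_0(z) = \sinh\omega z - 2\epsilon\omega\cosh\omega z$ and the transcendental equation \eqref{omegaeq}; since $\tanh\mu/\mu$ decreases strictly from $1$ to $0$, a unique positive root $\omega$ exists precisely in the regime $\epsilon < 1/2$, producing a single negative eigenvalue $-\epsilon\omega^2$. This slow mode will give rise to $K^{0}_{-1,\epsilon}$, while the positive eigenvalues produce the terms $K^{n}_{-1,\epsilon}$ of the tail sum $H_\epsilon(t)$.

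Then I would expand the initial datum in this orthogonal eigenbasis, obtaining $v(t,z) = \sum_{n\ge 0} c_n \phi_n(z)\,\mathrm{e}^{-\lambda_n t}$, set $z=0$, and multiply by $\mathrm{e}^{-t/(4\epsilon)}$. After undoing the gauge, the $n=0$ term yields a prefactor of the form $\mathrm{e}^{(\omega^2\epsilon - 1/(4\epsilon))t}$ and the $n\ge 1$ terms yield prefactors $\mathrm{e}^{-(\omega_n^2\epsilon + 1/(4\epsilon))t}$, matching the exponents in \eqref{K0} and in $K^{n}_{-1,\epsilon}$. The explicit constants $A^0_\epsilon$ and $A^n_\epsilon$ then arise from computing the inner products $\langle \mathrm{e}^{-z/(2\epsilon)},\phi_n\rangle$ and the norms $\|\phi_n\|^2$ on $L^2[0,1]$; each of these reduces to elementary integrals of $\mathrm{e}^{-z/(2\epsilon)}$ against $\sin\omega_n z, \cos\omega_n z$ (resp.\ their hyperbolic analogues for $n=0$), and dramatic simplification occurs once the defining identities $\tan\omega_n = 2\epsilon\omega_n$ and $\tanh\omega = 2\epsilon\omega$ are used to eliminate trigonometric boundary terms.

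The main obstacle I foresee is twofold. First, the bookkeeping required to squeeze the claimed closed-form expressions for $A^0_\epsilon$ and $A^n_\epsilon$ out of the inner-product calculations, where repeated application of the eigenvalue equations is essential to collapse the intermediate algebra. Second, justifying termwise convergence and differentiation of the series in \eqref{Kminus} for $t>0$; this follows from the asymptotics $\omega_n \sim (n-\tfrac12)\pi$ (so that $A^n_\epsilon \to 2$ and $\sin\omega_n/\omega_n = O(1/n)$), combined with the exponential decay $\mathrm{e}^{-(\omega_n^2\epsilon + 1/(4\epsilon))t}$, which provides uniform convergence on any set $\{t \ge t_0\}$ with $t_0 > 0$. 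Finally, the decay of $K^{0}_{-1,\epsilon}$ as $t\to\infty$ is automatic: the identity $\tanh\omega = 2\epsilon\omega$ with $\tanh\omega < 1$ forces $2\epsilon\omega < 1$, hence $\omega^2\epsilon - 1/(4\epsilon) < 0$, consistent with the probabilistic interpretation of $K_{-1,\epsilon}(t,0)$ as a survival probability tending to $0$.
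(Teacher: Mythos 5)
Your proposal is correct and follows essentially the same route as the paper's own proof: the exponential gauge transformation $K_{-1,\epsilon}(t,z)=\mathrm{e}^{z/(2\epsilon)-t/(4\epsilon)}u(t,z)$ reducing \eqref{PDE_Gi} to the heat equation with Robin/Dirichlet conditions and initial datum $\mathrm{e}^{-z/(2\epsilon)}$, the identification of the one hyperbolic eigenvalue $-\epsilon\omega^2$ via $\tanh\omega = 2\epsilon\omega$ and the countable trigonometric family $\epsilon\omega_n^2$ via $\tan\omega_n = 2\epsilon\omega_n$, and the coefficient computation by $L^2[0,1]$ projection. Your eigenfunctions written as $\sinh\omega z-2\epsilon\omega\cosh\omega z$ and $\sin\omega_n z - 2\epsilon\omega_n\cos\omega_n z$ differ from the paper's exponential form only by normalization, so the algebra is the same. (One small slip: for the oscillatory branch the roots satisfy $\omega_n\in(n\pi,(n+\tfrac12)\pi)$, so $\omega_n\sim n\pi$ for $\epsilon$ small rather than $(n-\tfrac12)\pi$, but this does not affect the uniform convergence argument for $t\ge t_0>0$.)
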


\begin{proof}
We begin by transforming
(\ref{PDE_Gi}) to the heat equation: direct calculation shows
that the solution $K_{-1,\epsilon}$ can be written as
\begin{equation}
K_{-1,\epsilon}(t,z) = {\mathrm{e}}^{\frac{1}{2\epsilon}z-\frac{1}{4\epsilon}t}u(t,z),\nn
\end{equation}
where $u$ is the solution of
\begin{subnumcases}{}
\frac{\partial u}{\partial t}=\epsilon\frac{\partial^2 u}{\partial z^2},\label{upde1}\\
u(0,z) = {\mathrm{e}}^{-\frac{1}{2\epsilon}z}, \,\,\left.\bigg(\frac{1}{2\epsilon}u+\frac{\partial u}{\partial z}\bigg)\right|_{z=0}=u|_{z=1}=0.\label{upde2}
\end{subnumcases}
The problem \eqref{upde1}-\eqref{upde2} can be solved using Sturm-Liouville theory \cite{WardCheney,Reddy}. More specifically, 
consider the ordinary differential equation $-\epsilon \frac{d^2\hat{u}}{d z^2} = \lambda \hat{u}$, supplemented with the separated boundary conditions
$\hat{u}(1)=0$ and $\hat{u}(0)/(2\epsilon)+\hat{u}'(0)=0$. Clearly, the problem is a regular Sturm-Liouville problem,
such that the corresponding normalized  eigenfunctions form an orthonormal basis
of the Hilbert space ${\mathcal{L}}^2([0,1])$ (see for example, Theorem 12 on page 204 of \cite{Reddy}). Hence, 
$u$ defined in \eqref{upde1}-\eqref{upde2} can be written as
\begin{equation} \label{usum_formula}
u(t,z) = \sum_{n=0}^{\infty}a_n{\mathrm{e}}^{-\lambda_n\epsilon t}\phi_n(z),
\end{equation}
where the $\phi_n$'s are the orthonormal eigenfunctions of the operator $L:=d^2/dx^2$ with the
specified boundary conditions on ${\mathcal{L}}^2([0,1])$ and the $a_n$'s are given as
projections of the initial conditions via
\begin{equation} \label{an_formula}
a_n  = \int_0^1\phi_n(z){\mathrm{e}}^{-z/(2\epsilon)}dz.
\end{equation}
It will be shown below  that $L$ has exactly one positive eigenvalue
$\omega^2$ given by the transcendental equation \eqref{omegaeq} and negative eigenvalues $-\omega_n^2$ given by
the corresponding equations \eqref{omeganeq}. After computing the eigenfunctions
explicitly, the $a_n$'s can be obtained directly. 

We now proceed to compute the positive eigenvalue. To this end, we solve $d^2\phi_0/dz^2 = \omega^2 \phi_0$ with the Dirichlet boundary condition
at $z=1$ and the mixed boundary condition $\phi_0/(2\epsilon)+(d\phi_0/dz) = 0$ at
$z=0$. Clearly, we can write the solution of $\phi_0$ as
\begin{equation} 
\phi_0(z) = A{\mathrm{e}}^{\omega z} + B{\mathrm{e}}^{-\omega z},\nn
\end{equation}
and then use the boundary conditions to find the constants $A, B$ and $\omega$. This yields
the relations
\begin{equation}\label{ABomega}
A = -B{\mathrm{e}}^{-2\omega} ,\qquad \tanh\omega=2\epsilon\omega.\end{equation}
It can be  easily seen that, for $\epsilon<1/2$, we have 
\[\frac{d}{d\omega}\left(\tanh\omega-2\epsilon\omega\right)=\frac{1}{\cosh^2\omega}-2\epsilon,\]
which monotonically decreases from $1-2\epsilon>0$ to $-2\epsilon$ as $\omega$ increases from $0$ to $\infty$. Hence,
the transcendental equation in \eqref{ABomega}
has a unique positive solution, 
which is also the unique positive eigenvalue of $L$. 
The corresponding eigenfunction $\phi$ is then given by
\begin{equation} \label{phi_0_formula}
\phi_0(z) = A\left({\mathrm{e}}^{-\omega z}-{\mathrm{e}}^{-2\omega}{\mathrm{e}}^{\omega z}\right),
\end{equation}
and the constant $A$ is fixed by the normalization requirement
\begin{displaymath}
\int_0^1 \phi_0(z)^2\, dz = 1
\end{displaymath}
yielding after straightforward algebra
\begin{equation}
A = \frac{1}{\sqrt{\frac{1-{\mathrm{e}}^{-4\omega}}{2\omega}-2{\mathrm{e}}^{-2\omega}}},\label{Acoef}
\end{equation}
Using \eqref{usum_formula}, \eqref{an_formula}, \eqref{phi_0_formula} and \eqref{Acoef}, the formulas for $K^{0}_{-1,\epsilon}$ and $A^0_{\epsilon}$ in \eqref{K0} and \eqref{A0} follow directly.
The calculations for the eigenfunctions corresponding to the negative eigenvalues are similar, and we omit it.\qquad
\end{proof}

Using a similar argument as in the proof of Theorem
\ref{ts_theorem_minus}, we can obtain the following result.

\begin{theorem}\label{ts_theorem_plus}
 Let $0<\epsilon<1/2$ and let $K_{+1,\epsilon}$ be the solution to
  \eqref{PDE_Gi}-\eqref{PDE_GiCond}. Then, for $t>0$,
\begin{equation}\label{Kplus1}
K_{+1,\epsilon}(t,0) = {\mathrm{e}}^{-\frac{t}{4\epsilon}}\,\sum_{n=1}^{\infty}K_{+1,\epsilon}^n(t),
\end{equation}
and the functions $K_{+1,\epsilon}^n$ are explicitly given by
\begin{displaymath}
K_{+1,\epsilon}^n(t) = {\mathrm{e}}^{1/(2\epsilon)}\,\frac{\sin\omega_n'}{\omega_n'}\frac{8\epsilon^2(\omega_n')^2}{4\epsilon^2(\omega_n')^2+1+2\epsilon}\,{\mathrm{e}}^{-(\omega_n')^2\epsilon\,t}\,,
\end{displaymath}
 where the $\omega_n'$ are the positive
solutions of the transcendental equation
\begin{displaymath}
\tan\omega_n' = -2\epsilon\omega_n'.
\end{displaymath}
\end{theorem}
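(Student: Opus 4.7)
\textbf{Proof proposal for Theorem \ref{ts_theorem_plus}.} The plan is to mirror the proof of Theorem \ref{ts_theorem_minus}, adapting each step to account for the change of sign $\mathcal{S}_i=+1$. First I would perform the substitution $K_{+1,\epsilon}(t,z)=\mathrm{e}^{-z/(2\epsilon)-t/(4\epsilon)}u(t,z)$ and verify by direct differentiation that the drift is absorbed and $u$ satisfies the pure heat equation $u_t=\epsilon u_{zz}$ on $[0,1]$, with transformed initial data $u(0,z)=\mathrm{e}^{z/(2\epsilon)}$ (from $K_{+1,\epsilon}(0,z)=1$ for $z\in[0,1)$), Dirichlet condition $u|_{z=1}=0$, and the Robin condition $(-u/(2\epsilon)+u_z)|_{z=0}=0$ inherited from $\partial_zK_{+1,\epsilon}|_{z=0}=0$. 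The crucial sign change compared with Theorem \ref{ts_theorem_minus} is that the coefficient in the Robin condition is $-1/(2\epsilon)$ rather than $+1/(2\epsilon)$.

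Next I would set up the associated regular Sturm--Liouville problem $-\phi''=\mu\phi$ on $[0,1]$ with $\phi(1)=0$ and $\phi'(0)-\phi(0)/(2\epsilon)=0$. For $\mu=\omega_n'^2>0$ the eigenfunctions are of the form $\phi_n(z)=A_n\cos(\omega_n'z)+B_n\sin(\omega_n'z)$; the boundary condition at $z=0$ gives $A_n=2\epsilon\omega_n'B_n$ and the Dirichlet condition at $z=1$ then forces $\tan\omega_n'=-2\epsilon\omega_n'$, which is the transcendental equation claimed in the theorem. For $\mu\le 0$ the analogous computation gives $\tanh\omega=-2\epsilon\omega$, which has no positive root since the left side is positive and the right side negative; this is the key structural difference with the $\mathcal{S}_i=-1$ case and explains why the series in \eqref{Kplus1} begins with $n=1$ rather than carrying an extra ``growing'' mode.

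With the eigenpairs in hand, I would normalize $\phi_n$ in $\mathcal{L}^2([0,1])$. Using $\tan\omega_n'=-2\epsilon\omega_n'$ to simplify $\sin(2\omega_n')$, $\sin^2\omega_n'$, and $\cos^2\omega_n'=1/(1+4\epsilon^2\omega_n'^2)$, the normalization reduces to $B_n^2=2/(4\epsilon^2\omega_n'^2+1+2\epsilon)$. Then I would expand the initial datum $\mathrm{e}^{z/(2\epsilon)}=\sum a_n\phi_n(z)$ with $a_n=\int_0^1\phi_n(z)\mathrm{e}^{z/(2\epsilon)}dz$ and compute this integral in closed form using the standard antiderivatives of $\mathrm{e}^{az}\cos(bz)$ and $\mathrm{e}^{az}\sin(bz)$; an algebraic miracle (parallel to the one in the $-1$ case) makes the boundary contribution at $z=0$ cancel and produces $a_n=2\epsilon B_n\mathrm{e}^{1/(2\epsilon)}\sin\omega_n'$. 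Combining with $\phi_n(0)=A_n=2\epsilon\omega_n'B_n$ and $u(t,z)=\sum a_n\phi_n(z)\mathrm{e}^{-\epsilon\omega_n'^2 t}$ yields the claimed identity
\[
K_{+1,\epsilon}(t,0)=\mathrm{e}^{-t/(4\epsilon)}\sum_{n=1}^\infty a_n\phi_n(0)\mathrm{e}^{-\epsilon\omega_n'^2 t}
\]
with the $n$-th summand exactly equal to $K_{+1,\epsilon}^n(t)$.

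The only non-routine point is justifying the spectral expansion: I would invoke the same regular Sturm--Liouville result used in the proof of Theorem \ref{ts_theorem_minus} (Theorem 12, p.~204 of \cite{Reddy}) to conclude that the normalized eigenfunctions form an orthonormal basis of $\mathcal{L}^2([0,1])$, so that the series converges in $\mathcal{L}^2$ and, together with uniform decay of the exponentials for $t>0$, converges pointwise to $u(t,z)$ on $(0,\infty)\times[0,1]$. The main technical obstacle is therefore not conceptual; it is the bookkeeping in the normalization and Fourier-coefficient integrals, which the transcendental identity $\tan\omega_n'=-2\epsilon\omega_n'$ is precisely designed to simplify.
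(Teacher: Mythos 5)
Your proof is correct and follows exactly the route the paper intends: the paper states only that Theorem \ref{ts_theorem_plus} follows ``using a similar argument as in the proof of Theorem \ref{ts_theorem_minus},'' and your argument supplies precisely the necessary modifications, namely the sign flip in the substitution ($\mathrm{e}^{-z/(2\epsilon)-t/(4\epsilon)}$), the Robin coefficient $-1/(2\epsilon)$, the fact that $\tanh\omega=-2\epsilon\omega$ has no positive root so the ``growing'' mode $K^{0}$ disappears, and the correct normalization and Fourier--coefficient computations (including the boundary cancellation at $z=0$). You also correctly read the initial condition $K_{+1,\epsilon}|_{t=0}=1$ on $[0,1)$ rather than the literal $0$ in \eqref{PDE_GiCond}, which is what makes $u(0,z)=\mathrm{e}^{z/(2\epsilon)}$ the right initial datum, matching the paper's own treatment of the $\mathcal{S}_i=-1$ case.
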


Using Theorem \ref{ts_theorem_minus}, it can be shown that the leading term of the asymptotic expansion of the function $K_{-1,\epsilon}$ for small $\epsilon$ is given by  $K_{-1,\epsilon}^0$. In fact, we have:

\begin{corollary}\label{cor:Hbound}
Under the assumptions of Theorem \ref{ts_theorem_minus} we have
\begin{eqnarray}
|H_\epsilon(t)|\le{2}\mathrm{e}^{-1/(2\epsilon)}\sum_{n=1}^\infty\frac{1}{n\pi}\mathrm{e}^{-\epsilon n^2\pi^2t},\,\,\,
\int_0^{\infty}|H_\epsilon(t)|dt\leq \frac{2}{\epsilon\pi^3}{\mathrm{e}}^{-1/(2\epsilon)}\zeta(3),\nn
\end{eqnarray}
where $\zeta(3)$ is Ap\'ery's constant. 
\end{corollary}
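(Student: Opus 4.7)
The plan is to exploit the closed-form series representation of $H_\epsilon$ given by Theorem \ref{ts_theorem_minus}, and use the transcendental equation $\tan\omega_n=2\epsilon\omega_n$ to simplify the product $A_\epsilon^n \cdot \sin\omega_n/\omega_n$ into a quantity uniformly of order $1/\omega_n$. First I would rewrite the defining relation as $\sin\omega_n=2\epsilon\omega_n\cos\omega_n$ and combine it with $\sin^2\omega_n+\cos^2\omega_n=1$ to obtain
\[
\cos^2\omega_n=\frac{1}{4\epsilon^2\omega_n^2+1},\qquad\frac{|\sin\omega_n|}{\omega_n}=\frac{2\epsilon}{\sqrt{4\epsilon^2\omega_n^2+1}}\le\frac{1}{\omega_n}.
\]
For the prefactor, the assumption $\epsilon<1/2$ gives $4\epsilon^2\omega_n^2+1-2\epsilon>4\epsilon^2\omega_n^2$, so $A_\epsilon^n<2$. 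Substituting into the expression for $K_{-1,\epsilon}^n(t)$ and using the trivial bound $\mathrm{e}^{-t/(4\epsilon)}\le 1$, this yields
\[
|K_{-1,\epsilon}^n(t)|\le\frac{2}{\omega_n}\,\mathrm{e}^{-1/(2\epsilon)}\,\mathrm{e}^{-\omega_n^2\epsilon t}.
\]

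Next I would establish the lower bound $\omega_n\ge n\pi$. On the interval $(n\pi,n\pi+\pi/2)$ the function $\omega\mapsto\tan\omega-2\epsilon\omega$ is continuous, equals $-2\epsilon n\pi<0$ at the left endpoint, and diverges to $+\infty$ at the right endpoint, so the unique positive root $\omega_n$ indexed as in Theorem \ref{ts_theorem_minus} lies strictly above $n\pi$ (one also checks that no root exists in $(0,\pi)$ since $\tan\omega-2\epsilon\omega>0$ on $(0,\pi/2)$ and $\tan\omega<0<2\epsilon\omega$ on $(\pi/2,\pi)$, so the indexing matches). With this in hand, summing the per-term bound gives the pointwise estimate
\[
|H_\epsilon(t)|\le\sum_{n=1}^{\infty}|K_{-1,\epsilon}^n(t)|\le 2\mathrm{e}^{-1/(2\epsilon)}\sum_{n=1}^{\infty}\frac{1}{n\pi}\mathrm{e}^{-\epsilon n^2\pi^2 t}.
\]

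Finally, for the $L^1$ bound I would integrate termwise, justified by nonnegativity of the summands (Tonelli), using $\int_0^\infty\mathrm{e}^{-\epsilon n^2\pi^2 t}dt=1/(\epsilon n^2\pi^2)$ to obtain
\[
\int_0^\infty|H_\epsilon(t)|dt\le\frac{2\mathrm{e}^{-1/(2\epsilon)}}{\epsilon\pi^3}\sum_{n=1}^\infty\frac{1}{n^3}=\frac{2}{\epsilon\pi^3}\mathrm{e}^{-1/(2\epsilon)}\zeta(3).
\]
The substantive step is the simplification in the first paragraph: the coefficient $A_\epsilon^n\sin\omega_n/\omega_n$ appears as the product of one factor that grows with $\omega_n$ and one that oscillates, but the transcendental equation forces a cancellation, collapsing the combined size to $O(1/\omega_n)$ uniformly in $n$. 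Once this is observed, the rest is a routine estimate using the explicit lower bound $\omega_n\ge n\pi$ and the standard identity $\sum n^{-3}=\zeta(3)$.
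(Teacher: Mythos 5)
Your proof is correct and follows essentially the same route as the paper: term-by-term bounds on $K_{-1,\epsilon}^n$ from the explicit series, using $A_\epsilon^n<2$, $\omega_n\ge n\pi$, and $\mathrm{e}^{-t/(4\epsilon)}\le1$, followed by termwise integration. One small remark: the exact identity $|\sin\omega_n|/\omega_n=2\epsilon/\sqrt{4\epsilon^2\omega_n^2+1}$ that you extract from the transcendental equation is never exploited beyond its crude consequence $\le 1/\omega_n$, which already follows from $|\sin\omega_n|\le 1$; the paper simply uses the latter, so the ``substantive cancellation'' you highlight in your closing paragraph is not actually load-bearing here, though your derivation is a valid (if slightly redundant) alternative.
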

\begin{proof} We verify this inequality by direct calculation. Using the explicit
expansion for $H_\epsilon(t)$ in \eqref{Kminus}, we find that
\begin{eqnarray*}
|H_\epsilon(t)|\le\mathrm{e}^{-1/(2\epsilon)} \sum_{n=1}^\infty A_\epsilon^n\frac{1}{\omega_n}\mathrm{e}^{-(\frac{1}{4\epsilon}+\epsilon\omega_n^2)t}\le {2}\mathrm{e}^{-1/(2\epsilon)}\sum_{n=1}^\infty\frac{1}{n\pi}\mathrm{e}^{-\epsilon n^2\pi^2t},
\end{eqnarray*}
where we have made use of the fact that $\omega_n\in[n\pi,n\pi+\pi/2]$. It follows that
\begin{eqnarray*}
\int_0^{\infty}|H_\epsilon(t)|\,dt &\leq& 2 {\mathrm{e}}^{-1/(2\epsilon)}\sum_{n=1}^{\infty} 
\frac{1}{n\pi}\int_0^{\infty}{\mathrm{e}}^{-\epsilon n^2\pi^2t}\,dt =\frac{2{\mathrm{e}}^{-1/(2\epsilon)}}{\epsilon\pi^3}\sum_{n=1}^{\infty} \frac{1}{n^3}=\frac{2{\mathrm{e}}^{-1/(2\epsilon)}}{\epsilon\pi^3}\zeta(3).
\end{eqnarray*}
\qquad
\end{proof}

Similarly, the tail of the integral of $K_{+1,\epsilon}(t,0)$ is exponentially small as $\epsilon\to0+$.
\begin{corollary}\label{Kbound}
Under assumption of Theorem \ref{ts_theorem_plus}, we have
\begin{equation}
\int_{6}^\infty K_{+1,\epsilon}(t,0)dt\le \frac{14}{\epsilon\pi^3}{\mathrm{e}}^{-1/\epsilon}\zeta(3).\nn
\end{equation}
\end{corollary}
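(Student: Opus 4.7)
The plan is to mimic Corollary~\ref{cor:Hbound}: expand $K_{+1,\epsilon}(t,0)$ via the series from Theorem~\ref{ts_theorem_plus}, dominate each term in absolute value by an easily integrable exponential, and then sum. The dominating estimate of a single term will rest on three elementary bounds: $|\sin\omega_n'|\le 1$; for $\epsilon<1/2$,
\[
\frac{8\epsilon^2(\omega_n')^2}{4\epsilon^2(\omega_n')^2+1+2\epsilon}\le 2;
\]
and a lower bound $\omega_n'\ge(n-1/2)\pi$ that comes from the transcendental equation $\tan\omega_n'=-2\epsilon\omega_n'$. The last bound is obtained by observing that, for $\omega>0$, the right-hand side is negative, so the $n$-th positive solution must lie in the interval $(n\pi-\pi/2,\,n\pi)$ where $\tan$ is negative; in particular $\omega_n'>(n-1/2)\pi$.

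Combining these three ingredients yields
\[
|K^n_{+1,\epsilon}(t)|\;\le\;\frac{2\,\mathrm{e}^{1/(2\epsilon)}}{(n-1/2)\pi}\,\mathrm{e}^{-(n-1/2)^2\pi^2\epsilon\,t},
\]
so that by \eqref{Kplus1},
\[
K_{+1,\epsilon}(t,0)\;\le\;\mathrm{e}^{-t/(4\epsilon)}\,\mathrm{e}^{1/(2\epsilon)}\sum_{n=1}^{\infty}\frac{2}{(n-1/2)\pi}\,\mathrm{e}^{-(n-1/2)^2\pi^2\epsilon\,t}.
\]
Integrating term-by-term on $[6,\infty)$ and throwing away the $\tfrac{1}{4\epsilon}$ contribution to the exponent inside the denominator (keeping only the $(n-1/2)^2\pi^2\epsilon$ part, which gives a cleaner bound), I get
\[
\int_6^\infty K_{+1,\epsilon}(t,0)\,dt
\;\le\;
\frac{2\,\mathrm{e}^{1/(2\epsilon)-6/(4\epsilon)}}{\epsilon\pi^3}\sum_{n=1}^{\infty}\frac{\mathrm{e}^{-6(n-1/2)^2\pi^2\epsilon}}{(n-1/2)^3}
\;\le\;
\frac{2\,\mathrm{e}^{-1/\epsilon}}{\epsilon\pi^3}\sum_{n=1}^{\infty}\frac{1}{(n-1/2)^3},
\]
where the identity $1/(2\epsilon)-6/(4\epsilon)=-1/\epsilon$ is precisely the reason the cutoff is chosen at $t=6$. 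The final step is the classical evaluation
\[
\sum_{n=1}^{\infty}\frac{1}{(n-1/2)^3}\;=\;8\sum_{k=0}^{\infty}\frac{1}{(2k+1)^3}\;=\;8\cdot\frac{7}{8}\,\zeta(3)\;=\;7\zeta(3),
\]
obtained by splitting $\zeta(3)$ into odd and even parts. Substituting gives exactly $\frac{14}{\epsilon\pi^3}\mathrm{e}^{-1/\epsilon}\zeta(3)$.

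There is no serious obstacle here; the only thing to get right is the lower bound on $\omega_n'$ (because the sign in the transcendental equation is opposite to that of Corollary~\ref{cor:Hbound}, which shifts the root from $(n\pi,n\pi+\pi/2)$ to $(n\pi-\pi/2,n\pi)$) and the bookkeeping that turns the prefactor $\mathrm{e}^{1/(2\epsilon)}$ into $\mathrm{e}^{-1/\epsilon}$ by using the lower integration limit $t=6$. Everything else is direct termwise domination followed by a known $\zeta(3)$ identity.
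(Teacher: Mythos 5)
Your proof is correct and follows essentially the same route as the paper: expand $K_{+1,\epsilon}(t,0)$ via Theorem~\ref{ts_theorem_plus}, bound $|\sin\omega_n'|\le1$ and the prefactor by $2$, integrate termwise on $[6,\infty)$ discarding the $1/(4\epsilon)$ part of the denominator, invoke $\omega_n'\ge(n-1/2)\pi$ from the transcendental equation, and evaluate $\sum_n 8/(2n-1)^3=7\zeta(3)$. The only cosmetic difference is the order in which the bound $\omega_n'\ge(n-1/2)\pi$ is applied, which does not affect the argument.
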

\begin{proof}
Using the explicit expansion for $K_{+1,\epsilon}(t,0)$ in \eqref{Kplus1}, we find that
\begin{eqnarray*}
\int_6^{\infty}K_{+1,\epsilon}(t,0)\,dt &\leq& \sum_{n=1}^{\infty} {\mathrm{e}}^{1/(2\epsilon)}
\,\left|\frac{\sin\omega_n'}{\omega_n'}\right|\frac{8\epsilon^2(\omega_n')^2}{4\epsilon^2(\omega_n')^2+1+2\epsilon}\int_6^{\infty}{\mathrm{e}}^{-(\frac1{4\epsilon}+(\omega_n')^2\epsilon)t}\,dt \\
&\leq& \sum_{n=1}^{\infty} 2{\mathrm{e}}^{1/(2\epsilon)}\frac{1}{\omega_n'}\frac{4\epsilon}{1+4\epsilon^2(\omega_n')^2} \mathrm{e}^{-(\frac3{2\epsilon}+6(\omega_n')^2\epsilon)}\\
&<& \sum_{n=1}^{\infty} \frac{2}{\epsilon}{\mathrm{e}}^{-1/\epsilon}\frac{1}{(\omega_n')^3}
\leq \frac{2}{\epsilon}{\mathrm{e}}^{-1/\epsilon}\frac{1}{\pi^3}\sum_{n=1}^{\infty} \frac{8}{(2n-1)^3}= \frac{14}{\epsilon}{\mathrm{e}}^{-1/\epsilon}\frac{1}{\pi^3}\zeta(3),
\end{eqnarray*}
where we have made use of the fact that $\omega_n'\in[n\pi-\pi/2,n\pi]$.\qquad
\end{proof}

\begin{lemma}\label{restimate}
Let $\omega$ be the unique positive solution to $\tanh \omega=2\epsilon \omega$ for $0<\omega<\frac{1}{2}$. Then there exits a constant $C>0$ such that 
\begin{eqnarray}
\bigg|\omega-\frac{1}{2\epsilon}+\frac{1}{\epsilon}\mathrm{e}^{-1/\epsilon}\bigg|\le\frac{C}{\epsilon^2}\mathrm{e}^{-2/\epsilon}.\nn
\end{eqnarray}
\end{lemma}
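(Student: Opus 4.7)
The plan is to convert $\tanh\omega=2\epsilon\omega$ into a self-consistent equation for the exponentially small correction $\delta:=\frac{1}{2\epsilon}-\omega$, and then bootstrap twice to reach the second-order estimate.

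Using the identity $\tanh\omega=(1-\mathrm{e}^{-2\omega})/(1+\mathrm{e}^{-2\omega})$, cross-multiplying, and dividing through by $2\epsilon$, the equation $\tanh\omega=2\epsilon\omega$ is equivalent to
\begin{equation*}
\delta=\frac{\mathrm{e}^{-2\omega}}{\epsilon(1+\mathrm{e}^{-2\omega})}>0.
\end{equation*}
In particular $\omega<1/(2\epsilon)$, so $\delta>0$, and substituting $2\omega=1/\epsilon-2\delta$ gives the self-consistent form
\begin{equation*}
\delta=\frac{\mathrm{e}^{-1/\epsilon}\mathrm{e}^{2\delta}}{\epsilon(1+\mathrm{e}^{-1/\epsilon}\mathrm{e}^{2\delta})}.
\end{equation*}

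To initialize the bootstrap I would establish the preliminary bound $\delta=O(1)$. The function $f(\omega):=\tanh\omega-2\epsilon\omega$ has derivative $\cosh^{-2}\omega-2\epsilon$, which vanishes only at the $\omega^*$ satisfying $\cosh\omega^*=1/\sqrt{2\epsilon}$; since $f(0)=0$, $f'(0)=1-2\epsilon>0$, and $f(\omega)\to-\infty$ as $\omega\to\infty$, the unique positive root lies to the right of $\omega^*$. For $\epsilon$ small one has $2\omega^*\ge\ln(1/\epsilon)-\ln 2$, so $\mathrm{e}^{-2\omega}\le\mathrm{e}^{-2\omega^*}\le 2\epsilon$, and hence $\delta\le\mathrm{e}^{-2\omega}/\epsilon\le 2$.

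With $\delta=O(1)$, the self-consistent equation immediately improves this to $\delta=O(\mathrm{e}^{-1/\epsilon}/\epsilon)$. Feeding this sharper bound back in once more, $\mathrm{e}^{2\delta}=1+O(\mathrm{e}^{-1/\epsilon}/\epsilon)$ and $1+\mathrm{e}^{-1/\epsilon}\mathrm{e}^{2\delta}=1+O(\mathrm{e}^{-1/\epsilon})$, yielding
\begin{equation*}
\delta=\frac{\mathrm{e}^{-1/\epsilon}}{\epsilon}+O\!\left(\frac{\mathrm{e}^{-2/\epsilon}}{\epsilon^2}\right),
\end{equation*}
which is exactly the claimed bound. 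The only step requiring any care is the initial $\delta=O(1)$, obtained from the location of the maximizer $\omega^*$ of $f$; once this is in hand, the two-step bootstrap is essentially mechanical.
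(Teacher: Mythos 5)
Your proof is correct, and it follows a genuinely different route from the paper's. The paper rewrites the transcendental equation as $f(x)=0$ with $f(x)=x-\tfrac{1}{2\epsilon}+\bigl(x+\tfrac{1}{2\epsilon}\bigr)\mathrm{e}^{-2x}$ (there is a typo there: $\mathrm{e}^{-2\epsilon}$ should read $\mathrm{e}^{-2x}$) and runs a Newton--Raphson step from $x_0=\tfrac{1}{2\epsilon}$, invoking a bound on $|f''/f'|$ to control the quadratic error. You instead derive the exact fixed-point identity $\delta = \mathrm{e}^{-1/\epsilon}\mathrm{e}^{2\delta}/\bigl(\epsilon(1+\mathrm{e}^{-1/\epsilon}\mathrm{e}^{2\delta})\bigr)$ for $\delta=\tfrac{1}{2\epsilon}-\omega$ and bootstrap it twice. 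The comparison is instructive: the Newton argument is brief but leans on the assertion that $|f''/f'|$ is uniformly bounded on $\mathbb{R}_+$, which as stated is not literally true (since $f'$ vanishes); what is actually needed, and true, is a bound in a neighbourhood of the root where $\mathrm{e}^{-2x}$ is exponentially small. Your bootstrap is more self-contained: the a priori bound $\delta\le 2$ is obtained cleanly from the location of the maximiser $\omega^*$ of $\tanh\omega-2\epsilon\omega$ via $\cosh\omega^*=1/\sqrt{2\epsilon}$ and $\omega>\omega^*$, after which the two-step improvement $\delta=O(1)\Rightarrow\delta=O(\mathrm{e}^{-1/\epsilon}/\epsilon)\Rightarrow\delta=\mathrm{e}^{-1/\epsilon}/\epsilon+O(\mathrm{e}^{-2/\epsilon}/\epsilon^2)$ is entirely elementary and makes each constant traceable. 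Both routes give the same conclusion; yours avoids the (imprecisely stated) derivative bound at the cost of one extra preliminary step.
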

\begin{proof}
We begin by rewriting the equation that $\omega$ satisfies as
\[f(\omega)=0,\,\,\,\text{where}\,\,\,f(x):=x-\frac{1}{2\epsilon}+\bigg(x+\frac{1}{2\epsilon}\bigg)\mathrm{e}^{-2\epsilon}.\]
We then employ a Newton-Raphson iteration with an initial value $x_0=\frac{1}{2\epsilon}$, and  define $\{x_n\}$ recursively using
\[x_{n+1}=x_n-\frac{f(x_n)}{f'(x_n)},\,\,\,n=0,1,\ldots.\]
The result now follows from the fact that $|f''(x)/f'(x)|$ is uniformly bounded on $\mathbb{R}_+$.\qquad
\end{proof}

\subsection{Proof of Proposition \ref{falsealarmasym}}
We now use Theorem \ref{ts_theorem_minus} to prove Proposition \ref{falsealarmasym}.
To this end, let us introduce 
\begin{eqnarray}
\tilde{f}_{S^{(0)},\hbar}:=\int_0^\infty \prod_{i=1}^NK_{-1,\epsilon_i}^0(c_i^{-2}\epsilon_it)\,dt.
\end{eqnarray}
Using Corollary \ref{cor:Hbound}, it can be shown that the leading term in the asymptotic expansion of $f_{\mathcal{S}^{(0)},\hbar}(0,\ldots,0)$ is  $\tilde{f}_{S^{(0)},\hbar}$, which  can be computed using explicit formulas in Theorem \ref{ts_theorem_minus}. 

Without lose of generality, we prove the results for the case $N=2$. 

Using the fact that $0\le K_{-1,\epsilon}(t,0)\le 1$ for all $t, \epsilon>0$, and that $\int|a(t)b(t)|dt\le\int |a(t)|dt\cdot\int |b(t)|dt$ for any integrable functions $a(t)$ and $b(t)$, we  have 
\begin{align}\label{estimate1}
&|f_{\mathcal{S}^{(0)},\hbar}(0,0)-\tilde{f}_{\mathcal{S}^{(0)},\hbar}|\\
\leq& \int_0^{\infty} |K_{-1,\epsilon_1}(c_1^{-2}\epsilon_1t,0)\cdot H_{\epsilon_2}(c_2^{-2}\epsilon_2t)|dt + \int_0^{\infty} |K_{-1,\epsilon_2}(c_2^{-2}\epsilon_2t,0)\cdot H_{\epsilon_1}(c_1^{-2}\epsilon_1t)|dt\nn \\ 
&+\int_0^{\infty} |H_{\epsilon_1}(c_1^{-2}\epsilon_1t)\cdot H_{\epsilon_2}(c_2^{-2}\epsilon_2t)|dt\nonumber\\
\le&\int_0^{\infty} |H_{\epsilon_2}(c_2^{-2}\epsilon_2t)|dt + \int_0^{\infty} |H_{\epsilon_1}(c_1^{-2}\epsilon_1t)|dt +\int_0^{\infty} |H_{\epsilon_1}(c_1^{-2}\epsilon_1t)\cdot H_{\epsilon_2}(c_2^{-2}\epsilon_2t)|dt\nonumber\\
\le&\int_0^{\infty} |H_{\epsilon_2}(c_2^{-2}\epsilon_2t)|dt + \int_0^{\infty} |H_{\epsilon_1}(c_1^{-2}\epsilon_1t)|dt +\int_0^{\infty} |H_{\epsilon_1}(c_1^{-2}\epsilon_1t)|dt\cdot\int_0^\infty |H_{\epsilon_2}(c_2^{-2}\epsilon_2t)|dt\nonumber\\
\le&\frac{2c_2^2}{\epsilon_2^2\pi^3}\zeta(3)\textup{e}^{-\frac{1}{2\epsilon_2}}+\frac{2c_1^2}{\epsilon_1^2\pi^3}\zeta(3)\textup{e}^{-\frac{1}{2\epsilon_1}}+\frac{4(c_1c_2)^2}{(\epsilon_1\epsilon_2)^2\pi^6}\zeta^2(3)\textup{e}^{-\frac{1}{2\epsilon_1}-\frac{1}{2\epsilon_2}}=\mathcal{O}\bigg(\frac{1}{\epsilon_{\max}^4}\textup{e}^{-\frac{1}{\epsilon_{\max}}}\bigg),\nn
\end{align}
where,  we used Corollary \ref{cor:Hbound} in the last inequality. %

On the other hand, let us denote by $\omega^{(i)}$ the unique positive solution to transcendental equation $\tanh\omega^{(i)}=2\epsilon_i\omega^{(i)}$ for a fixed $\epsilon_i\in(0,\frac{1}{2})$. 
Using Newton-Raphson iteration and Lemma \ref{restimate}, we can write
\begin{equation}\label{estimateomega}
\omega^{(i)} = \frac{1}{2\epsilon_i}-\frac{1}{\epsilon_i}{\mathrm{e}}^{-1/\epsilon_i}+r_i, \,\,\,\text{where}\,\,\, \,|r_i|\leq \frac{C_i}{\epsilon_i^2}{\mathrm{e}}^{-2/\epsilon_i},
\end{equation}
for some constant $C_i>0$.
Using \eqref{estimateomega},  straightforward algebra shows that the coefficient $A^0_{\epsilon_i}$ defined in \eqref{A0} is exponentially close to 1: there exists a constant $\tilde{C}_i>0$ such that%
\begin{equation}\label{coef}
\left|A^0_{\epsilon_i}-1\right| \leq \frac{\tilde{C}_i}{\epsilon_i}{\mathrm{e}}^{-1/\epsilon_i}.\nn
\end{equation}
Similarly,
using (\ref{estimateomega}) we obtain the decay rate of function $K_{-1,\epsilon}^0(t) $ defined in \eqref{K0}, as $\epsilon_i\to 0+$,
\begin{equation}\label{decay}
\epsilon_i\left(\omega^{(i)}\right)^2-\frac{1}{4\epsilon_i} = -\frac{1}{\epsilon_i}{\mathrm{e}}^{-1/\epsilon_i} + r_i + 2 {\mathrm{e}}^{-1/\epsilon_i} + \epsilon_i r_i^2<0.\nn
\end{equation}
Using \eqref{K0} and the last equation, we obtain 
\begin{eqnarray} \label{repG0}
K_{-1,\epsilon_i}^0(c_i^{-2}\epsilon_i t) &=& A^0_{\epsilon_i}\exp\left(-c_i^{-2}\textup{e}^{-1/\epsilon_i}t+(r_i+2\textup{e}^{-1/\epsilon_i}+\epsilon_ir_i^2)c_i^{-2}\epsilon_it\right),\nn
\end{eqnarray}
With this representation, it follows directly that
\begin{align}\label{estimate2}
\tilde{f}_{\mathcal{S}^{(0)},\hbar}=&\int_0^{\infty} K_{-1,\epsilon_1}^0(c_1^{-2}\epsilon_1t)K_{-1,\epsilon_2}^0(c_2^{-2}\epsilon_2t)dt\\
 = &\frac{A_{\epsilon_1}^0\cdot A_{\epsilon_2}^0}{\sum_{i=1}^2[c_i^{-2}\textup{e}^{-1/\epsilon_i}-(r_i+2\textup{e}^{-1/\epsilon_i}+\epsilon_ir_i^2)c_i^{-2}\epsilon_i]} =\frac{1}{\sum_{i=1}^2c_i^{-2}\textup{e}^{-1/\epsilon_i}} + {\mathcal{O}}(1/\epsilon_{\max}).\nn
\end{align}
This result now follows from  \eqref{estimate1} and \eqref{estimate2}.

\subsection{Proof of Proposition \ref{prop:bound_delay}}
Below we use Theorems \ref{ts_theorem_minus} and \ref{ts_theorem_plus} to prove Proposition \ref{prop:bound_delay}.

Using Lemma \ref{integral lemma}, we have
\begin{align}\label{bound_delay}
&|f_{\mathcal{S}^{(j)},\hbar}(0,0)-c_j^{2}(h_j-1+\textup{e}^{-h_j})|\\
=&\bigg|\int_0^\infty K_{+1,\epsilon_j}(c_j^{-2}\epsilon_jt,0)\cdot\bigg(\prod_{i\neq 1}K_{-1,\epsilon_i}(c_i^{-2}\epsilon_it,0)\bigg)\,dt-\int_0^\infty K_{+1,\epsilon_j}(c_j^{-2}\epsilon_jt,0)dt\bigg|\nonumber\\
\le&\int_0^{6c_{j}^2\epsilon_{j}^{-1}}K_{+1,\epsilon_j}(c_j^{-2}\epsilon_jt,0)\bigg|\prod_{i\neq j}K_{-1,\epsilon_i}(c_i^{-2}\epsilon_it,0)-1\bigg|\,dt+2\int_{6c_{j}^2\epsilon_{j}^{-1}}^\infty K_{+1,\epsilon_j}(c_j^{-2}\epsilon_jt,0)dt\nonumber\\
\le&\int_0^{6c_{j}^2\epsilon_{j}^{-1}}K_{+1,\epsilon_j}(c_j^{-2}\epsilon_jt,0)\bigg(1-\prod_{i\neq j}K_{-1,\epsilon_i}(c_i^{-2}\epsilon_it,0)\bigg)\,dt+\mathcal{O}(\frac{1}{\epsilon_{j}^{2}}\mathrm{e}^{-1/\epsilon_{j}})\nonumber\\
\le&\frac{6c_j^2}{\epsilon_j}\bigg(1-\prod_{i\neq j}K_{-1,\epsilon_i}\bigg(\frac{6c_i^{-2}\epsilon_i}{c_j^{-2}\epsilon_j},0\bigg)\bigg)+\mathcal{O}\bigg(\frac{1}{\epsilon_{j}^{2}}\mathrm{e}^{-1/\epsilon_{j}}\bigg),\nn
\end{align}
where we used Corollary \ref{Kbound} in the second inequality. The third inequality follows from the fact that $K_{-1,\epsilon_i}(t,0)$ is bounded between 0 and 1, and  it is decreasing in $t$. To bound the first term in \eqref{bound_delay}, we use Theorem \ref{ts_theorem_minus}, Corollary \ref{cor:Hbound}, \eqref{repG0} and the inequality $1-e^{-|x|}\le |x|$ to obtain that 
\begin{eqnarray*}
\bigg|1-\prod_{i\neq j}K_{-1,\epsilon_i}\bigg(\frac{6c_i^{-2}\epsilon_i}{c_j^{-2}\epsilon_j},0\bigg)\bigg|\le \sum_{i\neq j}\frac{6c_i^{-2}\mathrm{e}^{-1/\epsilon_i}}{c_j^{-2}\epsilon_j}(1+o(1))=\mathcal{O}\bigg(\frac{1}{\epsilon_j}\sum_{i\neq j}\mathrm{e}^{-1/\epsilon_i}\bigg).
\end{eqnarray*}
The result now follows.

{\bf Acknowledgements} The authors are grateful to the anonymous referee for pointing out the inequalities in (\ref{UB}) and (\ref{LB}) and thus giving solid ground for considering the optimization problem in (\ref{eqnproblemKLG}).


\begin{thebibliography}{99}

\bibitem{BassAbdeBenv}
{\sc M.~Basseville, M.~Abdelghani, and A.~Benveniste}, {\em Subspace-based
  fault detection algorithms for vibration monitoring}, Automatica, 36 (2000),
  pp.~101--109.

\bibitem{BassBenvGourMeve}
{\sc M.~Basseville, A.~Benveniste, M.~Goursat, and L.~Mevel}, {\em
  Subspace-based algorithms for structural identification, damage detection,
  and sensor data fusion}, EURASIP Journal on Advances in Signal Processing,
  2007 (2007), pp.~200--213.

\bibitem{BassMeveGour}
{\sc M.~Basseville, L.~Mevel, and M.~Goursat}, {\em Statistical model-based
  damage detection and localization, subspace-based residuals and
  damage-to-noise sensitivity ratios}, Journal of Sound and Vibration, 275
  (2004), pp.~769--794.

\bibitem{BassNiki}
{\sc M.~Basseville and I.~Nikiforov}, {\em Detection of Abrupt Changes: Theory
  and Application}, Prentice Hall, Englewood Cliffs NJ, 1993.

\bibitem{BayrPoor}
{\sc E.~Bayraktar and H.~V. Poor}, {\em Quickest detection of a minimum of two
  {P}oisson disorder times}, SIAM Journal of Control and Optimization, 46
  (2007), pp.~308--331.

\bibitem{WardCheney}
{\sc W.~Cheney}, {\em Analysis for Applied Mathematics}, Springer, New York,
  2001.

\bibitem{Ewin}
{\sc D.~J. Ewins}, {\em Modal Testing: Theory, Practice and Applications},
  Research Studies Press, Letchworth, Hertfordshire UK, 2nd~ed., 2000.

\bibitem{Hadj05}
{\sc O.~Hadjiliadis}, {\em Optimality of the {2-CUSUM} drift equalizer rules
  for detecting two-sided alternatives in the {B}rownian motion model}, Journal
  of Applied Probability, 42 (2005), pp.~1183--1193.

\bibitem{HadjHernStam}
{\sc O.~Hadjiliadis, G.~Hernandez del Valle, and I.~Stamos}, {\em A comparison
  of {2-CUSUM} stopping rules for quickest detection of two-sided alternatives
  in a {B}rownian motion model}, Sequential Analysis, 28 (2009), pp.~92--114.

\bibitem{Hadj06}
{\sc O.~Hadjiliadis and G.~V. Moustakides}, {\em Optimal and asymptotically
  optimal {CUSUM} rules for change point detection in the {B}rownian motion
  model with multiple alternatives}, Theory of Probability and Its
  Applications, 50 (2006), pp.~131--144.

\bibitem{Hadj07}
{\sc O.~Hadjiliadis and H.~V. Poor}, {\em On the existence and uniqueness of
  the best {2-CUSUM} rules for quickest detection of two-sided alternatives in
  a {B}rownian motion model}, Theory of Probability and Its Applications, 53
  (2009), pp.~610--622.

\bibitem{HadjSchaPoor}
{\sc O.~Hadjiliadis, T.~Schaefer, and H.~V. Poor}, {\em Quickest detection in
  coupled systems}, in Proceedings of the 48th IEEE Conference on Decision and
  Control, Shanghai, China, December 2009, pp.~4723--4728.

\bibitem{HadjZhanPoor}
{\sc O.~Hadjiliadis, H.~Zhang, and H.~V. Poor}, {\em One-shot schemes for
  decentralized quickest detection}, IEEE Transactions on Information Theory,
  55 (2009), pp.~3346--3359.

\bibitem{HeylLammSas}
{\sc W.~Heylen, S.~Lammens, and P.~Sas}, {\em Modal Analysis Theory and
  Testing}, Department of Mechanical Engineering, Katholieke Universiteit
  Leuven, Leuven, 1995.

\bibitem{Juan}
{\sc J.~N. Juang}, {\em Applied System Identification}, Prentice Hall,
  Englewood Cliffs, NJ, 1994.

\bibitem{KaratzasShreve91}
{\sc I.~Karatzas and S.~Shreve}, {\em Brownian Motion and Stochastic Calculus},
  Springer, 2nd~ed., 1991.

\bibitem{LiptShir1}
{\sc R.~S. Liptser and A.~N. Shiryaev}, {\em Statistics of Random Processes I},
  Springer-Verlag, Berlin, 2000.

\bibitem{LiptShir}
\leavevmode\vrule height 2pt depth -1.6pt width 23pt, {\em Statistics of Random
  Processes II}, Springer-Verlag, Berlin, 2000.

\bibitem{ABM}
{\sc R.~Mansuy and M.~Yor}, {\em Aspects of Brownian Motion}, Springer-Verlag,
  Berlin, 2008.

\bibitem{Mous86}
{\sc G.~V. Moustakides}, {\em Optimal stopping rules for detecting changes in
  distributions}, Annals of Statistics, 14 (1986), pp.~1379--1387.

\bibitem{Mous04}
\leavevmode\vrule height 2pt depth -1.6pt width 23pt, {\em Optimality of the
  {CUSUM} procedure in continuous time}, Annals of Statistics, 32 (2004),
  pp.~302--315.

\bibitem{Mous06}
\leavevmode\vrule height 2pt depth -1.6pt width 23pt, {\em Decentralized
  {CUSUM} change detection}, in Proceedings of the 9th International Conference
  on Information Fusion (ICIF), Florence, Italy, 2006, pp.~1--6.

\bibitem{Novi}
{\sc A.~A. Novikov}, {\em On the first passage time of an autoregressive
  process over a level and an application to a ``disorder'' problem}, Theory of
  Probability and Its Applications, 35 (1987), pp.~269--279.

\bibitem{PeetRoec}
{\sc B.~Peeters and G.~De Roeck}, {\em Reference-based stochastic subspace
  identification for output-only modal analysis}, Mechanical Systems and Signal
  Processing, 13 (1999), pp.~855--877.

\bibitem{PoorHadj}
{\sc H.~V. Poor and O.~Hadjiliadis}, {\em Quickest Detection}, Cambridge
  University Press, Cambridge, UK, 2008.

\bibitem{RaghVeer}
{\sc V.~Raghavan and V.~V. Veeravalli}, {\em Quickest detection of a change
  process across a sensor array}, in Proceedings of the 11th International
  Conference on Information Fusion, Cologne, Germany, July 2008, pp.~1--8.

\bibitem{Reddy}
{\sc B.~D. Reddy}, {\em Introductory Functional Analysis}, Springer, New York,
  1997.

\bibitem{RevuzYor}
{\sc D.~Revuz and M.~Yor}, {\em Continuous Martingales and Brownian Motion},
  Springer, New York, 2005.

\bibitem{Tart05}
{\sc A.~G. Tartakovsky}, {\em Asymptotic performance of a multichart {CUSUM}
  test under false alarm probability constraint}, in Proceedings of the 44th
  IEEE Conference on Decision and Control, Seville, Spain, December 2005,
  pp.~320--325.

\bibitem{VellClar}
{\sc M.~H. Vellekoop and J.M.C. Clark}, {\em A non-linear filtering approach to
  changepoint detection problems: Direct and differential-geometric methods},
  SIAM Journal on Control and Optimization, 42 (2003), pp.~469--494.

\bibitem{corr}
{\sc H.~Zhang and O.~Hadjiliadis}, {\em Quickest detection in a system with
  correlated noise}, in Proceedings of the 51st IEEE Conference on Decision and
  Control, Hawaii, December 2012, pp.~4757--4763.

\bibitem{RobustCUSUM}
{\sc H.~Zhang, N.~Rodosthenous, and O.~Hadjiliadis}, {\em Robustness of the
  {N-CUSUM} stopping rule in a {W}iener disorder problem}.
\newblock Preprint, 2013.

\end{thebibliography}
\end{document}